\pgfplotsset{compat=1.15}
\newcommand{\defn}[1]{{ \bf#1}}
\newtheorem{thm}{Theorem}
\newtheorem{lem}[thm]{Lemma}
\newtheorem{prop}[thm]{Proposition}
\newtheorem{defi}[thm]{Definition}
\newtheorem{ex}[thm]{Example}
\newtheorem{remark}[thm]{Remark}
\def\PQSym{\textbf{PQSym}}
\def\PQSymstar{\textbf{PQSym}^*}
\def\G{{\bf G}}
\def\F{{\bf F}}
\DeclareSymbolFont{Shuffle}{U}{shuffle}{m}{n}
\DeclareMathSymbol\shuffle{\mathbin}{Shuffle}{"001}
\title{The free and parking quasi-symmetrizing actions}
\author{Adrien Segovia\thanks{Université du Québec à Montréal, LACIM}}
\date{}
\begin{document}

\maketitle

\abstract{We define two actions of the infinite symmetric group on the set of words on positive integers, called the free and parking quasi-symmetrizing actions, whose invariants are respectively the elements of the Hopf algebras $\textbf{FQSym}^*$ and $\textbf{PQSym}^*$. We study in depth the parking quasi-symmetrizing action by generalizing it to actions with a parameter $r\in(\mathbb{N}\setminus \{0\} )\bigcup\{\infty\}$. We prove that the spaces of the invariants under these $r$-actions form an infinite chain of nested graded Hopf subalgebras of $\textbf{PQSym}^*$. We give some properties of these Hopf algebras including their Hilbert series, a basis, and formulas for their product and coproduct. Finally we look more closely at the case $r=\infty$, obtaining enumerative results related to trees with maximal decreasing subtrees of given sizes.
}

\section*{Introduction}

In algebraic combinatorics, the most well-known algebra is maybe $Sym$, the Hopf algebra of symmetric functions. By definition, its elements are the invariants under a symmetric group action, the one that permutes the variables. Hopf algebras are in general not defined as the invariants under an action, but we know of a few cases where such a symmetric action was defined to recover the elements of an algebra that was not previously defined in this way; for the algebra $\textbf{NCSF}$ of non-commutative symmetric functions it was done by Lascoux and Schützenberger \cite{LascouxSch} and for the algebras $QSym$, $WQSym$ and $WSym$ of quasi-symmetric, word quasi-symmetric and word symmetric functions it was done by Hivert \cite{hivert1999combinatoire}. In this article, we define the free and parking quasi-symmetrizing actions which enable us to give equivalent definitions of the Hopf alegbras $\textbf{FQSym}^*$ and $\PQSymstar$ of respectively free and parking quasi-symmetric functions.

$\textbf{FQSym}$, which is also called the Malvenuto-Reutenauer algebra \cite{MalReu}, is a Hopf algebra on permutations. It is a self-dual Hopf algebra and its polynomial realization is defined using an algorithm on words, the standardization, sending words to permutations. A basis element indexed by a permutation $\sigma$ is set to be equal to the sum of all words on positive integers whose standardization is $\sigma$.
One of our results, Theorem~\ref{thmFQSym}, states that the polynomials realizing $\textbf{FQSym}^*$ are the invariants under a symmetric group action, which we call the free quasi-symmetrizing action.
By using, in the polynomial realization of $\textbf{FQSym}$, commutative variables instead of noncommutative ones, we obtain the Hopf algebra QSym of quasi-symmetric functions. This algebra and a noncommutative analogue $\textbf{WQSym}$ of word quasi-symmetric functions have been well-studied \cite{hivert1999combinatoire,foissy2007bidendriform,bergeron2009hopf}. Hivert defined the quasi-symmetrizing action \cite{hivert1999combinatoire}, an action of the symmetric group on formal power series whose invariants are the polynomials realizing $QSym$. He did the same for $\textbf{WQSym}$ by defining the word quasi-symmetrizing action, a symmetric group action on words. By generalizing his actions with a parameter $r\in\mathcal{N}:=\big(\mathbb{N}\setminus \{0\}\big)\bigcup \{\infty\}$, he obtained \cite{hivertlocal} that the spaces of the invariants under these $r$-actions form an infinite chain of nested Hopf subalgebras interpolating between $QSym$ and $Sym$, the case $r=1$ being $QSym$ and the case $r=\infty$ being the Hopf algebra of symmetric functions $Sym$, and in the noncommutative analogue an interpolation between $\textbf{WQSym}$ and $\textbf{WSym}$, the Hopf algebra of word symmetric functions. Using the interpolation between $QSym$ and $Sym$, Garsia and Wallach \cite{GARSIA2007704} gave a much simpler proof of their previous result that the algebra of quasi-symmetric polynomials is free as a module over the ring of symmetric polynomials.

A\defn{parking function} of length $n$ is a word $u=u_1\dots u_n$ on positive integers whose nondecreasing rearrangement $u^{\uparrow}=u_1'\dots u_n'$ satisfies $u_i'\leq i$ for all $i$. The sets of parking functions of length $n\leq 3$ are
$\{\epsilon\}$, $\{1\}$, $\{11, 12, 21\}$, and
\vspace{-2mm}
$$\{111, 112, 113, 121, 122, 123, 131, 132, 211, 212, 213, 221, 231, 311, 312, 321\}.$$

\vspace{-2mm}
\noindent There are $(n+1)^{n-1}$ parking functions of length $n$ and $\frac{1}{n+1} \binom{2n}{n}$ nondecreasing ones.
Parking functions have been first studied to solve problems about data storage in computer science \cite{KonheimWeiss} and since then, they appeared in various areas of combinatorics (see \cite{stanley1998hyperplane} and the references given there). 
We can see parking functions as a generalization of permutations where some repeated letters are allowed.

In 2003, Novelli and Thibon defined a Hopf algebra on parking functions, denoted by $\PQSym$ for parking quasi-symmetric functions \cite{articlePQSym,novelli2005construction}. Its dual algebra $\PQSymstar$ can be realized as a subalgebra of $\mathbb{K}\langle A\rangle$, the free associative algebra over a totally ordered infinite alphabet A with $\mathbb{K}$ a field of characteristic $0$, and has $\textbf{FQSym}$ and $\textbf{WQSym}$ as Hopf subalgebras. A polynomial realization of the Hopf algebra $\PQSymstar$ is defined using an algorithm on words, the parkization, sending words to parking functions. A basis element indexed by a parking function $u$ is set to be equal to the sum of all words on positive integers whose parkization is $u$.
One of our results, Theorem~\ref{thmpolynomialrealization}, states that the elements of $\PQSymstar$ are the invariants under a symmetric group action, called the parking quasi-symmetrizing action, the basis elements being sums on the orbits. Similarly to \cite{hivertlocal}, we also generalize this action to an action with a parameter $r\in\mathcal{N}$ and obtain that the subspaces of the invariants form an infinite chain of nested graded Hopf subalgebras of $\PQSymstar$ (Theorem~\ref{thminterpolation}). We give properties of these Hopf subalgebras (Hilbert series, basis, formulas for the product and coproduct). The case $r=\infty$ is of particular interest since the dimensions of the homogeneous components of the Hopf algebra are related to trees with maximal decreasing subtrees of given sizes (Theorem~\ref{thdimtrees}). As a byproduct, we obtain a new point of view on the enumeration of these trees.

In Section~\ref{secBack} we recall the definitions of the Hopf algebras $\textbf{FQSym}^*$ and $\PQSymstar$, and recall some results of Hivert that inspired this article. In Section~\ref{actionperm} we introduce the free quasi-symmetrizing action.
In Section~\ref{sectionparkingquasi} we introduce the parking quasi-symmetrizing action. In Section~\ref{sectionactionparkingrR} we generalize this last action and introduce the $r$-parking quasi-symmetrizing actions, and in Section~\ref{sectioncasinfini} we focus on the case $r=\infty$.

\section{Background}
\label{secBack}

\subsection{The Hopf algebras $\textbf{FQSym}^*$ and $\PQSym^*$}
\label{rappelHopf}

Let $w$ be a word on positive intergers. There is an algorithm, called the\defn{standardization}, that maps a word $w$ to a permutation denoted by $\mathrm{Std}(w)$. It is defined as follows; we iteratively scan from left to right the word $w$, replace the occurences of the smallest letter with $1,2,\dots, k$, then the occurences of the second smallest letter with $k+1, k+2,\dots$ and so on. For example, $\mathrm{Std}(823278)=513246$.

The\defn{shuffle} $\shuffle$ is an associative and commutative operation on the linear span of the set of words on an alphabet $A$, with coefficients in a field $\mathbb{K}$ of characteristic $0$. The empty word is its identity element and if $a,b$ are letters and $u,v$ are words, we have $(au)\,\shuffle \,(bv) = a(u\,\shuffle \, bv) + b(au\,\shuffle \,v)$.
The\defn{shifted shuffle} $\Cup$ of two words $u,v$ is defined by 
$u \Cup v := u\shuffle v[|u|]$, where $v[|u|]$ is the word $v$ whose letters have all been incremented by the length of $u$. For example, we have
\begin{align*}
    12\,\shuffle \,21 &=  2\times 1221 + 1212 + 2121 + 2\times 2112 \\
    12\,\Cup \,21 &= 12\,\shuffle \,43 = 1243 + 1423 + 4123 + 1432 + 4132 + 4312 .
\end{align*}

\noindent Since the shifted shuffle of permutations
is a multiplicity-free sum, we will identify it with its underlying set. Although we will not use in this article the structure of Hopf algebra of $\textbf{FQSym}$ (only its vector space structure), let us define it.

\begin{defi}[\cite{MalReu}]
The Hopf algebra $\defn{\textbf{FQSym}}$ of\defn{free quasi-symmetric functions}, also called the Malvenuto-Reutenauer algebra, whose bases are indexed by permutations, has a basis $(\F_u)$ in which the product and coproduct are defined by
\begin{equation*}
\F_{\sigma} \F_{\tau} = \sum_{w\,;\, \,w \in \sigma \, \Cup \, \tau}  \F_w    
\end{equation*}
and 
\begin{equation*}
\Delta \F_{\sigma} = \sum_{u,v \,;\, u\cdot v ={\sigma}} \F_{\mathrm{Std}(u)} \otimes \F_{\mathrm{Std}(v)}    
\end{equation*}
 where $\cdot$ is the concatenation of words.
\end{defi}

The Hopf algebra $\textbf{FQSym}$ is noncommutative and noncocommutative. It is naturally graded by the size of permutations and is connected (the dimension of the homogeneous component of degree zero is $1$). Its graded dual is then also a Hopf algebra, denoted by $\textbf{FQSym}^*$. This dual is in fact isomorphic to $\textbf{FQSym}$. Let $\G_{\sigma} := \F_{\sigma}^*$ be the basis elements of the dual basis of $\textbf{FQSym}$. The algebra $\textbf{FQSym}^*$ is generated as a vector space by the set of the 

\begin{equation}
    \label{polyrealfq}
    \displaystyle \G_{\sigma}=\sum_{w\,; \,\mathrm{Std}(w)=\sigma} w, 
\end{equation}

\noindent for all $\sigma \in \bigcup_{n\geq 0} \mathfrak{S}_n$. For example, we have $\displaystyle \G_{231} = \sum \limits_{1 \leq  a < b\leq c} bca $.

Let us now focus on parking functions. Let $PF_n$ be the set of parking functions of length $n$ and $PF:=\bigcup_{n\geq 0} PF_n$ be the set of all parking functions.  A\defn{prime parking function} is a parking function $u$ whose nondecreasing rearrangement $u^{\uparrow}=u_1'\dots u_n'$ satisfies that $u_i'< i$ for all $i\geq 2$. Let $PPF_n$ be the set of prime parking functions of length $n$. The prime parking functions of length $n\leq 3$ are $\{\epsilon\}$, $\{1\}$, $\{11\}$, and $\{111, 112, 121, 211\}$.

\begin{prop}[\cite{Stanley_2023}, Exercise $49.(f)$]
\label{cardinalparkingfunctions}
We have, for all $n> 1$, $|PPF_n|=(n-1)^{n-1}$.
\end{prop}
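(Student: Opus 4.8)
The plan is to count prime parking functions through exponential generating functions, exploiting the classical decomposition of a parking function into ``connected'' (prime) blocks. First I would attach to a parking function $u \in PF_n$ the notion of a \emph{breakpoint}: an index $j$ with $1 \le j < n$ such that exactly $j$ of the letters of $u$ are $\le j$ (equivalently $u'_{j+1} = j+1$ in the nondecreasing rearrangement). The definition of a prime parking function, $u'_i < i$ for all $i \ge 2$, says precisely that $u$ has no breakpoint. The key structural observation is that the breakpoints $0 = b_0 < b_1 < \dots < b_k = n$ cut $u$ into blocks: the positions carrying a value in $(b_{i-1}, b_i]$ form a set $S_i$ of size $a_i := b_i - b_{i-1}$, and reading $u$ on $S_i$, after relabelling those positions by $1, \dots, a_i$ in increasing order and subtracting $b_{i-1}$ from each value, produces a \emph{prime} parking function of length $a_i$.

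Conversely, this data can be reassembled: a parking function of length $n$ is equivalent to the choice of a composition $(a_1, \dots, a_k)$ of $n$, an ordered set partition of $\{1, \dots, n\}$ into blocks $S_1, \dots, S_k$ of sizes $a_1, \dots, a_k$, and a prime parking function on each block. Since there are $\binom{n}{a_1, \dots, a_k}$ such ordered set partitions, setting $P(x) = \sum_{n \ge 0} |PF_n| \frac{x^n}{n!}$ and $Q(x) = \sum_{n \ge 1} |PPF_n| \frac{x^n}{n!}$, the decomposition translates into the functional equation $P(x) = \frac{1}{1 - Q(x)}$, whence $Q(x) = 1 - \frac{1}{P(x)}$.

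It then remains to identify $P$. Since $|PF_n| = (n+1)^{n-1}$, one checks directly that $P(x) = T(x)/x$, where $T(x) = \sum_{n \ge 1} n^{n-1}\frac{x^n}{n!}$ is the tree function, characterised by $T = x e^{T}$. Substituting $x/T = e^{-T}$ gives $Q = 1 - e^{-T}$, so $Q' = T' e^{-T}$; using the standard identity $x T' = \frac{T}{1-T}$ (an immediate consequence of $T = xe^T$) this simplifies to $Q'(x) = \frac{1}{1-T(x)} = \sum_{n \ge 0} n^n \frac{x^n}{n!}$. Integrating term by term (with $Q(0)=0$) yields $Q(x) = \sum_{n \ge 0} n^n \frac{x^{n+1}}{(n+1)!} = \sum_{n \ge 1} (n-1)^{n-1} \frac{x^n}{n!}$, and extracting the coefficient of $\frac{x^n}{n!}$ gives $|PPF_n| = (n-1)^{n-1}$, as claimed.

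The hard part will be making the decomposition bijection airtight: I would need to verify both that each block really is a parking function (the parking inequalities on $S_i$ follow from those of $u$ together with the absence of interior breakpoints) and that it is prime (any interior breakpoint of a block would be a breakpoint of $u$), and that reassembly is a genuine two-sided inverse of the cutting operation. The remaining generating-function manipulation is routine once the tree-function identities are in hand. As a sanity check the formula gives $|PPF_3| = 2^2 = 4$, matching the four prime parking functions $111, 112, 121, 211$ listed above.
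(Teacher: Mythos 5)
Your proof is correct, and it is necessarily a different route from the paper's, because the paper does not prove this proposition at all: it simply cites Stanley, Exercise~49(f). Your two ingredients are both sound. First, the cutting of a parking function at its breakpoints into prime blocks is exactly the prime-block decomposition the paper recalls in Section~\ref{rappelHopf} (the bars in $u^{\uparrow}$ before letters $u_i'\neq 1$ with $u_i'=i$) and later encodes via the map $\psi$, so the bijection ``parking function $\leftrightarrow$ ordered set partition of $[n]$ plus a prime parking function on each block'' is legitimate, and it translates into the EGF identity $P(x)=1/(1-Q(x))$. Second, the analytic steps check out: $P(x)=T(x)/x$ follows from $|PF_n|=(n+1)^{n-1}$ by comparing coefficients, hence $Q=1-x/T=1-e^{-T}$, then $Q'=T'e^{-T}=\tfrac{1}{1-T}=\sum_{n\geq 0} n^n x^n/n!$ (the endofunction series, i.e.\ permutations of rooted trees), and integrating gives $|PPF_n|=(n-1)^{n-1}$, valid in fact for all $n\geq 1$ with the convention $0^0=1$. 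By way of comparison, the solution referenced in Stanley is usually a direct bijective or cycle-lemma (Pollak-style) argument, in the same spirit as the tree bijections of Foata--Riordan and Rattan that the paper invokes in Section~\ref{sectioncasinfini}; those yield the count without generating functions, whereas your argument is self-contained given only $|PF_n|=(n+1)^{n-1}$ and the standard identities for the tree function $T=xe^{T}$, and it makes structurally transparent why the prime count has the same shape as the parking-function count. The steps you flagged as delicate (each block of the cut is prime, and cutting/reassembly are mutually inverse) are precisely the prime-block decomposition already used throughout the paper, so there is no gap there.
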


If $u\in PF_n$, we decompose its nondecreasing rearrangement $u^{\uparrow}=u_1'\dots u_n'$ by putting vertical bars just before the letters $u_i'\neq 1$ that satisfy $u_i'=i$. For example, if $u~=~25461851$, we have $u^{\uparrow}=112|4|556|8$. The\defn{prime blocks} of $u^{\uparrow}=112|4|556|8$ are the four subwords $112$, $4$, $556$, and $8$. The prime blocks of $u$ are obtained from those of $u^{\uparrow}$ by taking the subwords of $u$ on the letters of those words. The subwords of $u=25461851$ on the sets of letters $\{1,2\}$, $\{4\}$, $\{5,6\}$, and $\{8\}$ are the four prime blocks $211$, $4$, $565$, and~$8$.

The\defn{parkization} procedure \cite{articlePQSym} is an operation mapping words on positive integers to parking functions.  Let $w=w_1\dots w_n$ be a word on positive integers. The parkized word of $w$, denoted by $\mathrm{Park}(w)$, is obtained by the following recursive algorithm.

\noindent Let $d(w):= \min \{i \,: \,\, |\{j,\,w_j\leq i\} | < i \}$.
\begin{itemize}
\item[$\bullet$]  If $d(w)=n+1$, then return $w$.
\item[$\bullet$]  Otherwise, let $w'$ be the word obtained from $w$ by decrementing by one all letters greater than $d(w)$. Then return the parkized word of $w'$.
\end{itemize}

\begin{ex}
\label{exparkization}
If we start with $w=83493$, we obtain

\begin{tikzpicture}
\draw (0,0) node{$83493$};
\draw (1,0) node{$\longrightarrow$};
\draw (2,0) node{$72382$};
\draw (3,0) node{$\longrightarrow$};
\draw (4,0) node{$61271$};
\draw (5,0) node{$\longrightarrow$};
\draw (6,0) node{$51261$};
\draw (7,0) node{$\longrightarrow$};
\draw (9.5,0) node{$41251=\mathrm{Park}(83493) .$};
\end{tikzpicture}

\vspace{-1.9mm}
Indeed, we have $d(83493)=1$, so all letters greater than $1$ are decremented by one. We obtain  $72382$, and since $d(72382)=1$, we do the same, obtaining $61271$. We then have $d(61271)=4$, so all letters greater than $4$ are decremented by one, {\em etc.}
\end{ex}

This algorithm terminates since $w' < w$ for the lexicographic order. It returns a parking function since a word of length $n$ on positive integers is a parking function if, and only if the number of letters smaller than $i$ is greater than or equal to $i$, for all $1\leq i\leq n$.

In Example \ref{exparkization}, the prime blocks of $41251=\mathrm{Park}(83493)$ are $121$, $4$, and $5$. The prime blocks respectively came from the letters $343$, $8$, and $9$ in $83493$. An important observation that we will use in Section \ref{sectionparkingquasi} (Lemma~\ref{lemmaparkization}) is that the letters at the same positions in $121$ and $343$ have always the same difference, equal to $2$. Also the sequence of the differences $2$, $4=8-4$, and $4=9-5$ between the prime blocks of $\mathrm{Park}(83493)$ and the letters at the same positions in $83493$ is a nondecreasing sequence $(2,4,4)$.

We now define the Hopf algebra $\PQSym$. Since the shifted shuffle of parking functions is a multiplicity-free sum, we will identify it with its underlying set.

\begin{defi}[\cite{articlePQSym}]
The Hopf algebra $\defn{\PQSym}$ of\defn{parking quasi-symmetric functions}, whose bases are indexed by parking functions, has a basis $(\F_u)$ in which the product and coproduct are defined by
\begin{equation*}
\F_u \F_v = \sum_{w\,;\, \,w \in u \, \Cup \, v}  \F_w    
\end{equation*}
and
\begin{equation*}
 \Delta \F_u = \sum_{a,b \,;\, a\cdot b =u} \F_{\mathrm{Park}(a)} \otimes \F_{\mathrm{Park}(b)}   
\end{equation*}
 where $\cdot$ is the concatenation of words.
\end{defi}

The Hopf algebra $\PQSym$ is noncommutative and noncocommutative. It is graded by the length of parking functions and is connected. Its graded dual is then also a Hopf algebra, denoted by $\PQSymstar$. Let $\G_u := \F_u^*$ be the basis elements of the dual basis of $\PQSym$. The product and coproduct in $\PQSymstar$ satisfy

\begin{equation*}
\G_u  \G_v = \sum \limits_{\substack{w\in PF\, ;\, w=a\cdot b\\  \mathrm{Park}(a)=u,\, \mathrm{Park}(b)=v}} \G_w  
\end{equation*}
and
\begin{equation*}
\Delta \G_u = \sum_{a,b \,;\, u \in a \, \Cup \, b} \G_a \otimes \G_b.  
\end{equation*} 

For example, we have
\begin{align*}
\G_{11} \G_{21} &= \G_{1121} + \G_{1131} + \G_{1132} + \G_{1141} + \G_{1142} + \G_{1143} + \G_{2221} + \G_{2231} \\
&+ \G_{2241} + \G_{3321}. \\
 \Delta \G_{612441} &= 1 \otimes \G_{612441} + \G_{121} \otimes \G_{311} + \G_{12441} \otimes \G_{1} + \G_{612441} \otimes 1 .    
\end{align*}

Novelli and Thibon proved that $\PQSym^*$ is a bidendriform bialgebra (Theorem 3.6 of~\cite{novelli2005construction}), which implies, thanks to Foissy~\cite{foissy2007bidendriform}, that it is a self-dual Hopf algebra. In the sequel, we will focus on $\PQSymstar$ since, as opposed to $\PQSym$, it has a simple known realization in terms of noncommutative polynomials. Indeed, if $A$ is a totally ordered infinite alphabet, we set 
\begin{equation}
\label{Gsumwords}
 \G_u(A) := \sum_{w\in A^*, \,\mathrm{Park}(w)=u} w ,    
\end{equation}

\noindent where $A^*$ is the set of words on the alphabet $A$. Then the set of $\G_u(A)$ for $u\in PF$ generates a Hopf algebra isomorphic to $\PQSymstar$ so that $\PQSymstar$ can be seen as a subalgebra of $\mathbb{K}\langle A\rangle$.
For $A=\mathbb{N}\setminus \{0\}$, which will be always the case in the sequel, we have for example
$$\G_{113} = 113+114+115+\dots  + 224+225+226+\dots  + 335+336+337+\dots  = \sum \limits_{\substack{a\geq 1 \\ b\geq a+2}} aab .$$

\subsection{Some results of Hivert}
\label{resultsHivert}

We recall in this section some results of Hivert \cite{hivert1999combinatoire,hivertlocal} that motivated and are related to the results of this paper.

Let $X=\{x_1< x_2 < \cdots\}$ be a totally ordered infinite alphabet of commutative variables. The algebra $QSym$ of\defn{quasi-symmetric functions} can be defined as the vector space of bounded degree formal power series $F(x):=F(x_1, x_2,\dots)$ satisfying, for all $i_1<i_2<\dots<i_k$ and $j_1<j_2<\dots<j_k$,
$$\forall a_1,\dots,a_k \in \mathbb{N}, \,[x_{i_1}^{a_1} x_{i_2}^{a_2} \dots x_{i_k}^{a_k}] F(x) = [x_{j_1}^{a_1} x_{j_2}^{a_2} \dots x_{j_k}^{a_k}] F(x) ,$$

\noindent where for a monomial $s$, $[s]F(x)$ is the coefficient of $s$ in $F(x)$. We provide this vector space with the usual product of formal power series. With this definition, we deduce easily that the vectors
$$M_I := \sum_{j_1 < j_2 < \dots < j_r} x_{j_1}^{i_1} x_{j_2}^{i_2} \dots x_{j_r}^{i_r} ,$$

\noindent where $I=(i_1, i_2, \dots, i_r)$ is a integer composition, form a basis of $QSym$. This is the\defn{quasi-monomial basis}. For example $ M_{(1,2)}= x_1x_2^2+x_1x_3^2+\dots +x_2x_3^2+x_2x_4^2+\cdots$ .

When we use only a finite alphabet $X_n:=\{x_1< x_2 <\dots < x_n\}$, we have the algebra $QSym(X_n)$ of quasi-symmetric polynomials. The algebras $QSym$ and $QSym(X_n)$ are respectively subalgebras of $\mathbb{K}[X]$ and $\mathbb{K}[X_n]$. We can consider quasi-symmetric functions as a generalization of symmetric functions, a symmetric function being quasi-symmetric. Thus $Sym$ and $Sym(X_n)$ are respectively subalgebras of $QSym$ and $QSym(X_n)$.

Hivert defined \cite{hivert1999combinatoire} an action of the symmetric group $\mathfrak{S}_n$ on $\mathbb{K}[X_n]$, called\defn{the quasi-symmetrizing action}, such that the quasi-symmetric polynomials are the invariants. To define this action, we first code bijectively monomials as sequences of non negative integers with the identification $x_1^{k_1} x_2^{k_2} \dots x_n^{k_n} = [k_1, k_2, \dots, k_n]$. The quasi-symmetrizing action is defined by the action of the elementary transpositions $s_i:=(i,i+1)$ on monomials
\begin{equation}
s_i \cdot [k_1, \dots, k_i, k_{i+1}, \dots, k_n] = \left\{
    \begin{tabular}{ll}
        $[k_1, \dots, k_{i+1}, k_i, \dots, k_n] \,\text{if $k_i=0$ or $k_{i+1}=0$,}$\\
        
        $[k_1, \dots, k_i, k_{i+1}, \dots, k_n] \,\text{otherwise,}$
    \end{tabular}
\right.
\label{defaction}
\end{equation}
\noindent extended by linearity to an action on $\mathbb{K}[X_n]$.

Let us remark that this action is not compatible with the algebra structure, since for example $(s_1 \cdot x_1^2) (s_1 \cdot x_2) = x_1 x_2^2 \neq x_1^2 x_2= s_1 \cdot x_1^2 x_2$.
We remark that the quasi-symmetrizing action allows us only to move the zeros in the sequences $[k_1, \dots, k_n]$.

\begin{ex}
Let $n=4$. 

The orbit of $[4,1,0,0]$ is $\{[4,1,0,0], [4,0,1,0], [4,0,0,1], [0,4,1,0],
[0,4,0,1], [0,0,4,1]\}$ and
$$M_{(4,1)}=x_1^4x_2+x_1^4x_3+x_1^4x_4+x_2^4x_3+x_2^4x_4+x_3^4x_4 .$$
\end{ex}

The\defn{infinite symmetric group} $\frak{S}_{\infty}$ has as presentation $<s_1,s_2,\dots ,s_i,\dots |\,\mathcal{R}>$ with $s_i:=(i,i+1)$ the elementary transpositions and $\mathcal{R}$ the Moore-Coxeter relations.
In the case of an infinite alphabet $X$, by replacing monomials $[k_1, \dots, k_n]$ with infinite almost-zero sequences $[k_1,k_2, \dots]$ (all the $k_i$ are zeros for $i$ large enough) on which the elementary transpositions act like \eqref{defaction} we obtain an action of $\mathfrak{S}_{\infty}$ on $\mathbb{K}[ X]$, also called the\defn{quasi-symmetrizing action}. It is immediate that the quasi-monomial vectors are the sum of the elements in one of the orbits, and the elements of $QSym$ and $QSym(X_n)$ are the invariants.

Let $r\in \mathcal{N}$. What if we do not only allow the zeros to move in the sequences $[k_1, \dots, k_n]$ but all integers less than $r$? We also obtain a symmetric group action defined by 
\begin{equation}
s_i \underset{r}{\cdot} [k_1, \dots, k_i, k_{i+1}, \dots, k_n] = \left\{
    \begin{tabular}{ll}
        $[k_1, \dots, k_{i+1}, k_i, \dots, k_n] \,\text{if $k_i <r$ or $k_{i+1}<r$,}$\\
        
        $[k_1, \dots, k_i, k_{i+1}, \dots, k_n] \,\text{otherwise.}$
    \end{tabular}
\right.
\label{defraction}
\end{equation}

This action, and its extension by linearity to $\mathbb{K}[X_n]$, is called the\defn{$r$-quasi-symmetrizing action} . As done before, we also deduce an $r$-action in the case of an infinite alphabet $X$. The case $r=1$ is the quasi-symmetrizing action and the case $r=\infty$ is the usual action of permutation of the variables. By defining $QSym^r$ as the vector space of the invariants under the $r$-action, Hivert obtained the following inclusions of vector spaces
\begin{equation}
QSym = QSym^1 \supset QSym^2 \supset QSym^3 \supset ... \supset QSym^{\infty} = Sym  .
\label{interpolqsym}
\end{equation}

We can provide $QSym$ and $Sym$ with a coproduct which is compatible with the product (using for example the alphabet doubling trick) and thus $QSym$ and $Sym$ are both connected graded bialgebras, and thus Hopf algebras. 

It is easy to show that the orbits under the $r$-quasi-symmetrizing action are parameterized by\defn{$r$-compositions} $(I,\lambda)$, which are pairs of an integer composition $I$ with parts of size at least $r$ and an integer partition $\lambda$ with parts of size strictly less than $r$. We obtain a basis $M_{(I,\lambda)}^r$ of $QSym^r$ by summing the monomials in the orbit of $(I,\lambda)$. We have (which is Equation $(25)$ of \cite{hivertlocal}) 

$$ M_{(I,\lambda)}^r = \sum_{K \in I\, \shuffle \,\lambda^{\sigma}} M_K ,$$
where $\lambda^{\sigma}$ is the set of distinct permutations of $\lambda$.

This formula is very useful. It gives us a way to compute in $QSym^r$ by doing the computations in $QSym$, and proves that the subspaces $QSym^r$ have a basis whose vectors are disjoint sums of basis vectors of $QSym$. Hivert proved that the subspaces of \eqref{interpolqsym} are in fact Hopf subalgebras (\cite{hivertlocal}, Theorem $4.1$).
Using this formula and the fact that the $M_{(I,\lambda)}^r$ form a basis of $QSym^r$, he obtained (which is all together Proposition $3.8$, Theorem $3.10$, and Proposition $4.2$ of \cite{hivertlocal}):

\begin{thm}[\cite{hivertlocal}]
If $(I,\lambda)$ and $(J,\mu)$ are $r$-compositions, the product in $QSym^r$ is given by
$$M_{(I,\lambda)}^r M_{(J,\mu)}^r = \sum_{(K,\nu)} \sum \limits_{\substack{A \in I\,\shuffle \,\lambda^{\sigma}\\  B \in J\,\shuffle \,\mu^{\sigma}}} c_{(K,\nu),A,B} \,M_{(K,\nu)}^r  ,$$

\noindent where $c_{(K,\nu),A,B}$ is the coefficient of $(K,\nu)$ in the quasi-shuffle product of $A$ and $B$. The coproduct is given by
$$\Delta M_{(I,\lambda)}^r = \sum_{(K,K')=I\,, \, \lambda \in \nu \,\shuffle \,\nu'} M_{(K,\nu)}^r \otimes M_{(K',\nu')}^r .$$

The generating series of the dimensions of the homogeneous components of $QSym^r(X_n)$, where $X_n:=\{x_1<x_2<\cdots<x_n\}$, is given by
\begin{equation*}
\sum_{d,n} dim_d(QSym^r(X_n)) a^n t^d = \frac{1-t}{1-t-at^r} \prod_{i=0}^{r-1} \frac{1}{1-at^i} .    
\end{equation*}

\noindent In the case of $QSym^r$ (meaning $X$ infinite) the Hilbert series is
$$Hilb_t(QSym^r) = \sum_d dim_d(QSym^r)t^d = \frac{1}{1-t-t^r} \prod_{i=1}^{r-1} \frac{1}{1-t^i} .$$
\end{thm}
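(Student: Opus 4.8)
The plan is to transport every claim from the ambient Hopf algebra $QSym$ to $QSym^r$ through the expansion $M^r_{(I,\lambda)}=\sum_{A\in I\shuffle\lambda^\sigma}M_A$, which writes each $r$-quasi-monomial as a multiplicity-free sum of ordinary quasi-monomials indexed by the compositions in the orbit of $(I,\lambda)$. Since the displayed inclusions are already known to be inclusions of Hopf subalgebras (Theorem $4.1$ of~\cite{hivertlocal}), the product of two $M^r$-vectors and the coproduct of one are again invariant and hence re-expand in the $M^r$-basis; the entire content of the structural formulas is therefore to compute these expressions in the $M$-basis of $QSym$ and to read off the structure constants after re-collecting.

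For the product I would start from the fact that in the monomial basis of $QSym$ one has $M_A M_B=\sum_C c_{C,A,B}\,M_C$, where $c_{C,A,B}$ is the coefficient of the composition $C$ in the quasi-shuffle (stuffle) of $A$ and $B$. Substituting the two expansions and multiplying gives $M^r_{(I,\lambda)}M^r_{(J,\mu)}=\sum_{A,B}\sum_C c_{C,A,B}\,M_C$ with $A\in I\shuffle\lambda^\sigma$ and $B\in J\shuffle\mu^\sigma$. Because the left-hand side is invariant, the total coefficient of $M_C$ is constant as $C$ runs over an orbit; collecting the $M_C$ by the $r$-composition $(K,\nu)$ whose orbit contains them and evaluating this common coefficient on the canonical representative of $(K,\nu)$ produces exactly the stated formula. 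The only thing to verify is this orbit-constancy, which is immediate from invariance, so the product is the easy part.

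The coproduct is handled identically, now using that $\Delta M_C$ is the deconcatenation $\sum_{C=C'\cdot C''}M_{C'}\otimes M_{C''}$. Applying this to each $A\in I\shuffle\lambda^\sigma$ and cutting $A$ between two consecutive parts, I observe that every part of $A$ is either a part of $I$ (size $\ge r$) or a part of $\lambda$ (size $<r$), so a cut simultaneously deconcatenates $I$ as $I=K\cdot K'$ and distributes the multiset of parts of $\lambda$ as $\lambda\in\nu\shuffle\nu'$; the prefix then lies in the orbit of $(K,\nu)$ and the suffix in that of $(K',\nu')$. I expect this re-collection to be the main obstacle: one must check that the multiplicities match exactly, i.e. that each tensor $M^r_{(K,\nu)}\otimes M^r_{(K',\nu')}$ occurs with coefficient one. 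This reduces to a bijection between pairs (element of the orbit of $(I,\lambda)$, admissible cut) and triples (deconcatenation $I=K\cdot K'$, splitting $\lambda\in\nu\shuffle\nu'$, element of the orbit of $(K,\nu)$ paired with an element of the orbit of $(K',\nu')$), and the delicate point is that counting the distinct rearrangements of $\lambda$ must agree with counting the distinct rearrangements of $\nu$ and of $\nu'$ independently.

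Finally, the two generating series are a pure count of $r$-compositions, since the $M^r_{(I,\lambda)}$ form a basis: $\dim_d QSym^r$ is the number of $r$-compositions of degree $d$, and $\dim_d QSym^r(X_n)$ counts those with $\ell(I)+\ell(\lambda)\le n$. I would set up a weighted generating function in which $t$ marks the total size and $a$ marks the number of parts $\ell(I)+\ell(\lambda)$. A composition all of whose parts are $\ge r$ is a sequence of parts, each contributing $\tfrac{a\,t^r}{1-t}$, hence contributing $\tfrac{1-t}{1-t-a\,t^r}$ altogether, while a partition with parts in $\{1,\dots,r-1\}$ contributes $\prod_{i=1}^{r-1}(1-a\,t^i)^{-1}$. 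For the finite alphabet $X_n$ the basis vector $M^r_{(I,\lambda)}(X_n)$ is nonzero precisely when $\ell(I)+\ell(\lambda)\le n$, so summing $a^n$ over all admissible $n$ multiplies the count by $(1-a)^{-1}=(1-a\,t^0)^{-1}$; absorbing this into the product gives the claimed series $\tfrac{1-t}{1-t-a\,t^r}\prod_{i=0}^{r-1}(1-a\,t^i)^{-1}$. The infinite-alphabet Hilbert series is then obtained by passing to the stable limit (equivalently, by dropping the length constraint in the degree generating function), and I would simplify the resulting rational function to the displayed form.
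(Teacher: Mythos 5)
Your overall strategy is exactly the route this paper attributes to Hivert (the paper does not reprove this theorem; it only cites Proposition 3.8, Theorem 3.10 and Proposition 4.2 of \cite{hivertlocal}, indicating that they follow from the expansion $M_{(I,\lambda)}^r=\sum_{K\in I\shuffle\lambda^{\sigma}}M_K$ and the basis property): transport everything to the $M$-basis of $QSym$, compute there, and re-collect. Your product argument is correct, including the appeal to Theorem 4.1 of \cite{hivertlocal} for invariance of the left-hand side, which this paper also quotes beforehand, so there is no circularity. For the coproduct, the ``delicate point'' you worry about is in fact harmless: since every part of $I$ has size at least $r$ and every part of $\lambda$ has size strictly less than $r$, any $A\in I\shuffle\lambda^{\sigma}$ determines unambiguously which of its parts come from $I$ and which from $\lambda$; consequently all the shuffle sets involved are multiplicity-free, and a pair (element $A$ of the orbit, cut of $A$) corresponds bijectively to the data $(I=K\cdot K',\ \lambda=\nu\sqcup\nu',\ A'\in K\shuffle\nu^{\sigma},\ A''\in K'\shuffle\nu'^{\sigma})$. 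So each tensor $M_{(K,\nu)}^r\otimes M_{(K',\nu')}^r$ occurs with coefficient exactly one, and your re-collection goes through.

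The one step that genuinely fails is your last sentence. Your (correct) computation of the infinite-alphabet Hilbert series yields $\frac{1-t}{1-t-t^r}\prod_{i=1}^{r-1}\frac{1}{1-t^i}$, and this cannot be ``simplified to the displayed form'': it differs from the displayed series by the factor $1-t$. The discrepancy is not your error but a misprint in the stated theorem. Indeed, the displayed infinite-alphabet formula is inconsistent with the finite-alphabet formula displayed just above it: taking the stable limit of that formula (multiply by $1-a$ and set $a=1$) cancels the $i=0$ factor and leaves exactly your series. It also fails numerically: for $r=1$ it gives $\frac{1}{1-2t}$, i.e.\ $\dim_1 QSym=2$, and for $r=2$ it predicts $\dim_1 QSym^2=2$, whereas the only degree-one invariant is $x_1+x_2+\cdots$, so these dimensions equal $1$. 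Your series is the correct one; for $r\geq 2$ it can be rewritten as $\frac{1}{1-t-t^r}\prod_{i=2}^{r-1}\frac{1}{1-t^i}$, which is presumably the origin of the misprinted lower index. So instead of forcing your answer to match the display, you should flag the discrepancy and state the corrected formula; with that caveat, and the routine verification noted above written out, your proof is complete.
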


Let us now consider the noncommutative analogues. We replace the monomials $x_1^{k_1}\dots x_n^{k_n}=[k_1,\dots,k_n]$ by words on a totally ordered infinite alphabet of noncommutative variables $A=\{a_1<a_2<\dots \}$, or a finite one $A_n=\{a_1<a_2<\dots <a_n\}$.
Let $|w|_i$ be the number of occurrences of the letter $a_i$ in the word $w$ on $A_n$. We have the noncommutative analogue of the $r$-action
\begin{equation}
s_i \underset{r}{\cdot} w = \left\{
    \begin{tabular}{ll}
        $w'$ \,\text{if $|w|_i <r$ or $|w|_{i+1}<r$,}\\
        
        $w$ \,\text{otherwise,}
    \end{tabular}
\right.
\label{actionrwqsym}
\end{equation}

\noindent where $w'$ is the word $w$ where the $a_i$ letters are replaced by the letter $a_{i+1}$, and the $a_{i+1}$ letters are replaced by the letter $a_i$. 

\begin{ex}
Let $n=4$. We have
\begin{equation}
s_2  \underset{r}{\cdot} a_2a_1a_1a_3a_2a_4 =
\left\{
    \begin{tabular}{ll}
        \text{$a_3a_1a_1a_2a_3a_4$} \,\text{ if $r\geq 2$,}\\
        
     \text{$a_2a_1a_1a_3a_2a_4$}    \,\text{ if $r=1$.}
    \end{tabular}
\right.
\end{equation}
\end{ex}

By linearity, $\underset{r}{\cdot}$  defines an action of $\mathfrak{S}_n$ (or $\mathfrak{S}_{\infty}$ if $A$ is infinite) on $\mathbb{K}\langle A_n \rangle$ ($\mathbb{K}\langle A \rangle$ if $A$ is infinite).
Let $\textbf{WQSym}^r$ be the vector space of the invariants under the $r$-action. The case $r=1$ is the Hopf algebra $\textbf{WQSym}$ \cite{hivert1999combinatoire,novelli2005construction,bergeron2009hopf} and the case $r=\infty$ is the Hopf algebra $WSym$ \cite{10.1215/S0012-7094-36-00253-3,bergeron2009hopf}. 

We recall that packing is an algorithm mapping words on $A$ to packed words, a packed word being a word $m$ on positive integers such that if $i>1$ is in $m$, then $i-1$ also.
The packing of a word $w$ on $A$, written $\mathrm{Pack}(w)$, is the image by the morphism $w_i \mapsto i$ of the word $w$, where $w_1<\dots <w_k$ are the letters appearing in $w$. For example, we have $\mathrm{Pack}(a_7a_3a_9a_5a_5a_2a_5) = 4253313$.

A basis of $\textbf{WQSym}$ is given by the $\textbf{WQM}_u$, where $u$ is a packed word on $A$, defined by
$$\textbf{WQM}_u := \sum_{w\,;\,\mathrm{Pack}(w)=u} w .$$

\noindent It is immediate that we have the following inclusions of vector spaces
\begin{equation*}
\textbf{WQSym} = \textbf{WQSym}^1 \supset \textbf{WQSym}^2 \supset \textbf{WQSym}^3 \supset ... \supset \textbf{WQSym}^{\infty} = \textbf{WSym}  .
\label{interpolwqsym}
\end{equation*}

\noindent As in the commutative case, we have a very useful formula 
\begin{equation*}
\textbf{WQM}_{(I,\lambda)}^{\, r} = \sum_{K \in I\, \shuffle \,\lambda^{\sigma}} \textbf{WQM}_K ,   
\end{equation*}
where $I$ is a set composition with parts of size at least $r$, $\lambda$ a set partition with parts of size strictly less than $r$ and $\lambda^{\sigma}$ is the set of all set compositions form by using the parts of $\lambda$. The shuffle product is a shuffle of set compositions, which means that $I\, \shuffle \,\lambda '$, for $\lambda ' \in \lambda^{\sigma}$, is the set of all set compositions where the parts are the parts of $I$ and $\lambda '$ with the same order on the parts coming from $I$, and the ones coming from $\lambda '$ . We can then compute in $\textbf{WQSym}^r$ and Hivert proved (\cite{hivertlocal}, Theorem $7.1$) that the above inclusions are inclusions of Hopf algebras.

\section{The free quasi-symmetrizing action}
\label{actionperm}

In this section we define a new symmetric group action on formal power series in non-commuting variables such that the elements of the Hopf algebra $\textbf{FQSym}^*$ are the invariants. 
We begin by defining a bijection between words on positive integers and certain almost zero sequences.

\begin{defi}
Let $E$ be the set of all infinite almost zero sequences $a=(a_1,a_2,\dots)$ of non-negative integers such that the subword $a_{j_1}\dots a_{j_k}$ of the non-zero $a_i$, called the\defn{associated permutation}, is a permutation of $[k]$. 
\end{defi}

\begin{ex}
We have $(0,6,4,0,0,2,3,1,7,0,5,0,\dots) \in E$ because the associated permutation is $6423175 \in \mathfrak{S}_7$. 
\end{ex}

\begin{defi}
\label{defbij1}
Let $w=w_1\dots w_n$ be a word on positive integers and $\tau:=\mathrm{Std}(w)^{-1}$. We now define an element $f(w)$ in $E$ having $\tau$ as its associated permutation. This requires additionally that we specify the positions of the zeros. For this, we scan $\tau$ from left to right and at each step $i$, for $i\in [n-1]$, we have two possibilities:
\begin{itemize}
    \item[$(1)$] if $\tau_i < \tau_{i+1}$, then we put $w_{\tau_{i+1}}-w_{\tau_i}$ zeros between $\tau_j$ and $\tau_{i+1}$ in $f(w)$,
    \item[$(2)$] otherwise $\tau_i > \tau_{i+1}$, then we put $(w_{\tau_{i+1}}-w_{\tau_i})-1$ zeros between $\tau_i$ and $\tau_{i+1}$ in $f(w)$.
\end{itemize}
We also add $w_{\tau_1}-1$ zeros before $\tau_1$. Then we obtain $f(w)$ by putting an infinite number of zeros at the right of $\tau_n$.
\end{defi}

\begin{ex}
\label{exF}
Let $w=972554=w_1\dots w_6$. We have $\mathrm{Std}(w)^{-1}=364521=\tau_1\dots \tau_6$. So $f(w)$ is equal to $(3,6,4,5,2,1)$ with extra zeros added. Since $3<6$, we are in case $(1)$, and thus add $w_6-w_3= 4-2=2$ zeros between $3$ and $6$ and obtain $(3,0,0,6,4,5,2,1)$. Since $6>4$, we are in case $(2)$, and thus add $(w_4-w_6)-1=(5-4)-1=0$ zero between $6$ and $4$ and thus obtain $(3,0,0,6,4,5,2,1)$. Since $4<5$, we are in case $(1)$, and thus add $w_5-w_4=5-5=0$ zero between $4$ and $5$ and thus obtain $(3,0,0,6,4,5,2,1)$. 
By continuing the algorithm, we will finally obtain
$$f(972554)=(0,3,0,0,6,4,5,0,2,0,1,0,\dots) .$$
\end{ex}

\begin{defi}
\label{defrevbij1}
Let $a=(a_1,a_2,\dots)\in E$ with $a_{j_1}\dots a_{j_k}$ its associated permutation. From $a$, we construct a word of length $k$ on positive integers $g(a):=w_1 \dots w_k$. We start by setting $w_{a_{j_1}}:=j_1$. Then at step $i+1$ with $i\in [k-1]$, we already know the letters $w_{a_{j_1}}$, $w_{a_{j_2}}$, $\dots$, $w_{a_{j_i}}$ of $g(a)$ and we set
\begin{equation*}
w_{a_{j_{i+1}}}:= \left\{\begin{tabular}{ll}
    $w_{a_{j_i}}+(j_{i+1}-j_i)-1$ &\text{if }$\,a_{j_{i}}<a_{j_{i+1}}$,   \\
    $w_{a_{j_i}}+(j_{i+1}-j_i)$ &\text{otherwise.} 
\end{tabular}
\right.
\end{equation*}
\end{defi}

\begin{ex}
\label{exG}
Let $a=(0,3,0,0,6,4,5,0,2,0,1,0,\dots)=(a_1,a_2,\dots)\in E$. The associated permutation is $364521=a_2a_5a_6a_7a_9a_{11}$, thus $g(a)$ is a word of length $6$. Let $w_i$ be the $i^{th}$ letter of $g(a)$. We have $w_{a_2}=w_3=2$ and then $g(a)=\_\, \_\, 2\, \_\, \_\, \_$. We have $w_{a_5}=w_6=2+(5-2)-1=4$ since $a_2=3<6=a_5$, and then $g(a)=\_\, \_\, 2\, \_\, \_\, 4$. We have $w_{a_6}=w_4=4+(6-5)=5$ since $6>4$, and then $g(a)=\_\, \_\, 2\, 5\, \_\, 4$. We have $w_{a_7}=w_5=5+(7-6)-1=5$ since $4<5$, and then $g(a)=\_\, \_\, 2\, 5\, 5\, 4$. We have $w_{a_9}=w_2=5+(9-7)=7$ since $5>2$, and then $g(a)=\_\, 7\, 2\, 5\, 5\, 4$. We have $w_{a_{11}}=w_1=7+(11-9)=9$ since $2>1$, and then $g(a)=9\, 7\, 2\, 5\, 5\, 4$.
\end{ex}

 We remark with Examples \ref{exF} and \ref{exG} that $f(972554)=(0,3,0,0,6,4,5,0,2,0,1,0,\dots)$ and $g((0,3,0,0,6,4,5,0,2,0,1,0,\dots))=9 7 2 5 5 4$. It is easy to prove the following:

\begin{prop}
The maps $f$ and $g$ are inverses of each other.
\end{prop}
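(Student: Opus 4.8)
The plan is to show that $g \circ f = \mathrm{id}$ on words and $f \circ g = \mathrm{id}$ on $E$; since both maps preserve the relevant combinatorial data (the length $n$ of a word equals the size of the associated permutation), it suffices to establish one composite together with the fact that both sets are in bijection with $\bigcup_n \mathfrak{S}_n$ via the associated permutation --- but to be safe I would verify both directions directly. The crucial observation is that $f$ and $g$ are built from the \emph{same} permutation data: $f(w)$ has associated permutation $\tau = \mathrm{Std}(w)^{-1}$, and $g$ reads off this associated permutation to reconstruct a word. So the heart of the argument is purely about how the gaps (runs of zeros) encode the letter-differences of $w$.

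\emph{First}, I would record the bookkeeping underlying both maps. Write $\tau = \mathrm{Std}(w)^{-1} = \tau_1 \dots \tau_n$, so that $\tau_i$ is the position in $w$ of the letter receiving standardized rank $i$; equivalently $w_{\tau_1} \le w_{\tau_2} \le \dots \le w_{\tau_n}$, with a strict inequality exactly when a ``descent-type'' reset occurs. In the sequence $f(w) \in E$, the nonzero entries spell out $\tau$, and the number of zeros inserted at each step is governed by cases $(1)$ and $(2)$ of \Cref{defbij1}. If in $f(w)$ the value $\tau_i$ sits at index $j_i$ (so $a_{j_i} = \tau_i$), then the number of zeros strictly between the entries $\tau_i$ and $\tau_{i+1}$ is exactly $j_{i+1} - j_i - 1$. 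The key identity to extract is therefore
\begin{equation*}
j_{i+1} - j_i - 1 =
\begin{cases}
w_{\tau_{i+1}} - w_{\tau_i} & \text{if } \tau_i < \tau_{i+1}, \\
(w_{\tau_{i+1}} - w_{\tau_i}) - 1 & \text{if } \tau_i > \tau_{i+1},
\end{cases}
\end{equation*}
which is immediate from the definition of $f$. Solving for $w_{\tau_{i+1}} - w_{\tau_i}$ then reproduces precisely the recursion defining $g$ in \Cref{defrevbij1} (noting that $a_{j_i} = \tau_i$, so the condition $a_{j_i} < a_{j_{i+1}}$ matches $\tau_i < \tau_{i+1}$).

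\emph{Second}, I would run the two compositions. For $g(f(w))$: the associated permutation of $f(w)$ is $\tau$ itself, so $g$ reconstructs a word $w'$ whose standardization inverse is $\tau$, hence $\mathrm{Std}(w') = \mathrm{Std}(w)$; and the displayed recursion shows the consecutive differences $w'_{\tau_{i+1}} - w'_{\tau_i}$ agree with those of $w$. Combined with the base case ($w_{\tau_1} - 1$ zeros before $\tau_1$ fixes $w'_{\tau_1} = w_{\tau_1}$), an induction on $i$ gives $w' = w$. For $f(g(a))$: starting from $a \in E$ with associated permutation $\pi = a_{j_1}\dots a_{j_k}$, the word $g(a)$ has $\mathrm{Std}(g(a))^{-1} = \pi$ by construction, so $f$ re-inserts zeros according to the same case split, and the same identity shows the gap lengths recovered by $f$ equal the original $j_{i+1}-j_i-1$; hence the zero-positions, and the whole sequence, are restored, giving $f(g(a)) = a$.

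\emph{The main obstacle} will be the careful verification that $\mathrm{Std}(g(a))^{-1}$ really equals the prescribed associated permutation $\pi$ --- i.e. that the recursion in $g$ produces a word whose standardization is the inverse of $\pi$. One must check that the recursion assigns equal letters exactly when it should (the ``otherwise'' branch with difference $j_{i+1}-j_i$ versus the strict branch $j_{i+1}-j_i-1$), so that the tie-breaking in the standardization reproduces the ascents and descents of $\pi$ in the correct order. This amounts to confirming that $w_{\pi_1} \le \dots \le w_{\pi_k}$ holds with the right pattern of strict versus weak inequalities, which is exactly what the two-case recursion is designed to enforce; once this compatibility between the standardization and the recursion is nailed down, the rest is a routine induction, and I would present it as such rather than belaboring each arithmetic step.
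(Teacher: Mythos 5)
Your proof is correct: the gap-length identity you extract from Definition~\ref{defbij1} (the number of zeros between consecutive nonzero entries is $j_{i+1}-j_i-1$, which equals $w_{\tau_{i+1}}-w_{\tau_i}$ or that minus one according to the ascent/descent case) is exactly the recursion of Definition~\ref{defrevbij1}, and your flagged ``main obstacle'' --- checking that $\mathrm{Std}(g(a))^{-1}$ equals the prescribed associated permutation because equal letters can only be produced in the $a_{j_i}<a_{j_{i+1}}$ branch, so ties are broken left-to-right exactly as standardization demands --- is indeed the one genuinely non-trivial point, and your resolution of it is sound. The paper itself offers no proof (it states the result as ``easy to prove''), and your argument is precisely the routine verification it leaves to the reader.
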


Thus, we encoded bijectively all words on $\mathbb{N}\setminus \{0\}$ by elements of $E$. From now on, we will forget about the bijection $f$ and identify a word $w$ with $a\in E$ when $f(w)=a$.

\begin{prop}
\label{seqstandardization}
Let $a\in E$ and $\sigma$ be its associated permutation. Then the standardization of $a$ is $\sigma^{-1}$. 
The sequences in $E$ that have the same standardization are those with the same associated permutation.
\end{prop}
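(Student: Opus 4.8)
The plan is to read ``the standardization of $a$'' as $\mathrm{Std}(g(a))$, where $g(a)$ is the word identified with $a$, and to prove the equivalent statement $\mathrm{Std}(g(a)) = \sigma^{-1}$, or what is the same, $\mathrm{Std}(g(a))^{-1} = \sigma$. The quickest route is to invoke Definition~\ref{defbij1}: writing $a = f(w)$ with $w = g(a)$, the sequence $f(w)$ was built to have $\mathrm{Std}(w)^{-1}$ as its associated permutation, so $\sigma = \mathrm{Std}(w)^{-1}$ and hence $\mathrm{Std}(a) = \mathrm{Std}(w) = \sigma^{-1}$. However, I would rather give a self-contained argument straight from the recursion defining $g$ in Definition~\ref{defrevbij1}, since it does not presuppose that $f$ realizes exactly the prescribed associated permutation.

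First I would recall the combinatorial meaning of inverse standardization: for a word $w = w_1 \dots w_k$, the permutation $\mathrm{Std}(w)^{-1}$ lists the positions of $w$ in non-decreasing order of their letters, ties being broken so that the earlier position comes first. Thus to show $\mathrm{Std}(g(a))^{-1} = \sigma$ it suffices to check that, writing $\sigma = (\sigma_1, \dots, \sigma_k)$ and $w = g(a)$, one has $w_{\sigma_1} \le w_{\sigma_2} \le \dots \le w_{\sigma_k}$ with the correct tie-breaking. I would read this off the recursion: with $j_1 < j_2 < \dots < j_k$ the positions of the non-zero entries of $a$, and using $a_{j_m} = \sigma_m$, the defining formula gives $w_{\sigma_{m+1}} - w_{\sigma_m} = (j_{m+1}-j_m) - 1 \ge 0$ in the case $\sigma_m < \sigma_{m+1}$, and $w_{\sigma_{m+1}} - w_{\sigma_m} = j_{m+1} - j_m \ge 1 > 0$ in the case $\sigma_m > \sigma_{m+1}$. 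Hence $w_{\sigma_1}, \dots, w_{\sigma_k}$ is non-decreasing, an equality $w_{\sigma_m} = w_{\sigma_{m+1}}$ can occur only in the first case (where $\sigma_m < \sigma_{m+1}$ and $j_{m+1}-j_m = 1$), and this is exactly the situation in which standardization puts the smaller position first. Therefore $(\sigma_1, \dots, \sigma_k) = \mathrm{Std}(w)^{-1}$, giving the first assertion.

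For the second assertion, I would simply note that inversion is a bijection on permutations and that $\sigma^{-1}$ and $\tau^{-1}$ lie in the same symmetric group $\mathfrak{S}_k$ only when $\sigma$ and $\tau$ do. Consequently two sequences of $E$ share the same standardization if and only if their associated permutations have equal inverses, i.e.\ if and only if the associated permutations coincide. The only delicate point in the whole argument, and the step I would be most careful about, is matching the tie-breaking convention of $\mathrm{Std}$ with the case split $(1)/(2)$ of the construction: one must verify that equal letters in $w$ force $\sigma_m < \sigma_{m+1}$, so that the order induced by $\sigma$ on positions carrying equal letters agrees with ``earlier position first''.
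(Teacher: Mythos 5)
Your proof is correct, but it takes a genuinely different route from the paper. The paper's proof is a one-liner: it reads the statement off the construction of $f$ (Definition~\ref{defbij1} builds $f(w)$ precisely so that its associated permutation is $\mathrm{Std}(w)^{-1}$), together with the previously stated fact that $f$ and $g$ are mutually inverse; the second assertion then follows since inversion of permutations is injective. You instead work entirely on the $g$ side: from the recursion of Definition~\ref{defrevbij1} you show that the letters of $w=g(a)$ read along $\sigma$ satisfy $w_{\sigma_m}\le w_{\sigma_{m+1}}$, with equality possible only when $\sigma_m<\sigma_{m+1}$, and you correctly identify that this tie-breaking condition is exactly what characterizes $\mathrm{Std}(w)^{-1}$, so $\sigma=\mathrm{Std}(w)^{-1}$. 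What each approach buys: the paper's argument is immediate but leans on the unproved claim that $f$ and $g$ are inverses (that proposition is only asserted as ``easy to prove''), whereas your argument never invokes $f$ at all and is self-contained; in fact it supplies a piece of the verification that $f$ and $g$ are inverse bijections, since it shows directly that $g(a)$ has the standardization that $f$ would require. Your explicit treatment of the tie-breaking convention (equal letters force an ascent of $\sigma$, matching the ``earlier position first'' rule, and chaining across longer blocks of equal letters) is precisely the delicate point that the paper's appeal to the definition leaves implicit.
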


\begin{proof}
It is by definition of the  bijection $f$. The second part is then a direct consequence of the first one.
\end{proof}

To define a group action on words on $\mathbb{N}\setminus \{0\}$ we define it on $E$.

\begin{prop}
\label{acfqsym}
We define an action $\cdot$ of $\frak{S}_{\infty}$ on $E$ (and thus on words on $\mathbb{N}\setminus \{0\}$) by giving the action of the generators $s_i$. The elementary transposition $s_i$ acts on $a\in E$ by swapping the $i^{th}$ and $(i+1)^{st}$ coordinates if one of these coordinates is zero, and doing nothing otherwise.
\end{prop}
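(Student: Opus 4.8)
The plan is to check that the rule assigning to each generator $s_i$ the self-map of $E$ that swaps the $i$-th and $(i+1)$-st coordinates when at least one of them vanishes (and is the identity otherwise) respects the defining data of $\mathfrak{S}_{\infty}$. Since $\mathfrak{S}_{\infty}=\langle s_1,s_2,\dots\mid \mathcal{R}\rangle$ with $\mathcal{R}$ the Moore--Coxeter relations, it suffices to verify (i) that this map sends $E$ into $E$, and (ii) that these maps satisfy the relations $\mathcal{R}$; the relation $s_i^2=1$ will in particular show each map is an involution, hence a bijection of $E$, so that the assignment extends to a homomorphism $\mathfrak{S}_{\infty}\to \mathrm{Sym}(E)$, that is, to an action.

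For well-definedness, I would observe that when both coordinates $a_i,a_{i+1}$ are nonzero the map fixes $a$, while when at least one of them is zero, swapping them leaves the ordered subword of nonzero entries of $a$ unchanged, since no nonzero letter is created, deleted, or moved past another nonzero letter. Hence the associated permutation is preserved and the image lies in $E$.

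Next I would dispose of the easy relations. The relation $s_i^2=1$ holds because, if the swap is performed, one of the two coordinates is zero both before and after, so a second application undoes it; and if nothing is done, neither is its square. The commutations $s_is_j=s_js_i$ for $|i-j|\geq 2$ hold because $s_i$ and $s_j$ alter the disjoint pairs of coordinates $\{i,i+1\}$ and $\{j,j+1\}$, and maps with disjoint support commute.

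The real work, and the main obstacle, is the braid relation $s_is_{i+1}s_i=s_{i+1}s_is_{i+1}$, which couples the three consecutive coordinates $(x,y,z):=(a_i,a_{i+1},a_{i+2})$. I would settle it by a finite case analysis on the zero-pattern of $(x,y,z)$ (the sequence of conditional swaps performed depends only on the positions of the zeros), computing both triples $s_is_{i+1}s_i\cdot(x,y,z)$ and $s_{i+1}s_is_{i+1}\cdot(x,y,z)$ and checking that they coincide. For example, for $(0,y,z)$ with $y,z\neq 0$ both sides yield $(y,z,0)$, and for $(x,0,0)$ with $x\neq 0$ both sides yield $(0,0,x)$; the remaining zero-patterns are handled in the same way. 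Once every case matches, all relations of $\mathcal{R}$ are respected and the claimed action is established.
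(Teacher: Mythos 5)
Your proof is correct, but it takes the route that the paper deliberately sidesteps. You verify the defining relations of the presentation directly: well-definedness (a swap involving a zero never moves one nonzero entry past another, so the associated permutation, and hence membership in $E$, is preserved), $s_i^2=1$, commutation for $|i-j|\ge 2$ by disjointness of support, and the braid relation by a finite case analysis over the eight zero-patterns of $(a_i,a_{i+1},a_{i+2})$; the two patterns you compute are right, and the remaining ones do check out the same way, so the argument is complete in outline. The paper instead avoids all relation-checking: it encodes $a\in E$ as a pair $(A,B)$, where $A$ is the $0/1$ indicator sequence of the nonzero positions and $B$ is the associated permutation, and observes that under this bijection the proposed rule becomes exactly the ordinary place-permutation action of $\mathfrak{S}_{\infty}$ on $A$, with $B$ left untouched (when both entries are nonzero, swapping the two equal $1$'s in $A$ is invisible, which matches ``do nothing''). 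Transporting a known action through a bijection is automatically an action, so nothing needs to be verified case by case. The paper's argument is shorter and explains conceptually why the rule works --- the nonzero values simply ride along with the $1$'s; yours is more elementary and self-contained, and it adapts essentially verbatim to the $r$-deformations mentioned in the remark following Theorem~\ref{thmFQSym} (replace ``is zero'' by ``is strictly less than $r$'' throughout the case analysis), a setting where the paper's encoding does not carry over as stated, since small nonzero entries can then cross one another and the associated permutation is no longer invariant.
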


\begin{proof}
We could verify that the Moore-Coxeter relations are satisfied, but we can prove that it defines an action directly by making a link with the action by permutation of the variables. We have a bijection between $E$ and the set of pairs with first coordinate an almost zero sequence containing only $1$ and $0$'s and second coordinate a permutation of $\frak{S}_k$ where $k$ is the number of $1$'s in the almost zero sequence. This bijection sends $a\in E$ to the pair $(A,B)$ where $A$ is $a$ with each positive element replaced by $1$ and $B$ is the associated permutation of $a$. Then $\cdot$ is just the action on the pairs that acts by permuting the entries for the first coordinate and does not change the second coordinate, which proves that it is an action.
\end{proof}

\begin{defi}
We call the above action, and its extension by linearity to sums of words on $\mathbb{N}\setminus \{0\}$, the\defn{free quasi-symmetrizing action}.
\end{defi}

\begin{ex}
We have $s_2 \cdot (1,0,2,0,\dots) = (1,2,0,\dots)$, meaning $s_2 \cdot 12 = 11$.
We have $s_2 \,\cdot \,(0,3,0,0,6,4,5,0,2,0,1,0,\dots) = (0,0,3,0,6,4,5,0,2,0,1,0,\dots)$, meaning $s_2 \cdot 972554 = 973554$.
\end{ex}

\begin{thm}
\label{thmFQSym}
The orbit of $w$, denoted by $\mathrm{Orb}(w)$, is the set of words whose standardization is $\mathrm{Std}(w)$. If $\displaystyle \sigma \in \bigcup_{n\geq 0} \frak{S}_n$, we have
$$ \G_{\sigma} =\sum_{w\,, \,w\in \mathrm{Orb}(\sigma)} w .$$
An (infinite) sum of words on positive integers is in $\textbf{FQSym}^*$ if and only if it is invariant under the free quasi-symmetrizing action.
\end{thm}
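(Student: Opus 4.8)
The plan is to prove the three assertions in order, each resting on the previous one, and to reduce the central orbit computation to the pair bijection $a\mapsto(A,B)$ introduced in the proof of Proposition~\ref{acfqsym}, where $A$ is the almost-zero $0/1$ sequence recording the positions of the nonzero entries of $a$ and $B$ is its associated permutation. The key structural fact I would exploit is that, under this identification, the free quasi-symmetrizing action acts only on the first coordinate, by the ordinary permutation of positions, and leaves $B$ fixed.

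For the orbit description I would argue in two directions. Since $s_i$ swaps coordinates $i$ and $i+1$ only when one of them is zero, it never interchanges two nonzero entries, and therefore preserves the left-to-right order of the nonzero entries; hence the associated permutation $B$ is constant along every orbit. Conversely I must show that any two sequences of $E$ with the same associated permutation lie in a single orbit. In the pair picture this is the statement that the $\mathfrak{S}_{\infty}$-orbit of the $0/1$ sequence $A$ consists of all almost-zero $0/1$ sequences with the same number of ones: because $A$ has finitely many ones and infinitely many zero slots, a single generator $s_i$ slides a one past an adjacent zero, and composing such moves reaches an arbitrary placement of the ones. Combining the two directions, $\mathrm{Orb}(a)$ is exactly the set of elements of $E$ with associated permutation $B$, which by Proposition~\ref{seqstandardization} (recall that $\mathrm{Std}$ of such a sequence is $B^{-1}$, so equal associated permutation is equivalent to equal standardization) is precisely the set of words whose standardization equals $\mathrm{Std}(w)$. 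This establishes the first assertion.

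The second assertion is then immediate: a permutation $\sigma$ viewed as a word satisfies $\mathrm{Std}(\sigma)=\sigma$, so $\mathrm{Orb}(\sigma)=\{w : \mathrm{Std}(w)=\sigma\}$, and comparison with the defining formula \eqref{polyrealfq} yields $\G_{\sigma}=\sum_{w\in\mathrm{Orb}(\sigma)}w$; in particular each $\G_{\sigma}$ is the sum over a single orbit and is therefore invariant. For the final equivalence I would first note that the action is graded, since $s_i$ neither creates nor destroys nonzero entries and hence preserves the size of the associated permutation, i.e.\ the length of the underlying word; invariance can thus be tested degree by degree, which disposes of any subtlety with infinite sums. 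In a fixed degree $n$ the orbits are indexed by $\mathfrak{S}_n$ via the first assertion, and the corresponding orbit-sums are exactly the $\G_{\sigma}$ with $\sigma\in\mathfrak{S}_n$, which form the homogeneous basis of $\textbf{FQSym}^*$ in that degree. Consequently a sum of words is invariant if and only if each homogeneous component has coefficients constant on orbits, i.e.\ is a linear combination of the $\G_{\sigma}$, i.e.\ lies in $\textbf{FQSym}^*$; the reverse inclusion holds as well since elements of $\textbf{FQSym}^*$ are linear combinations of the invariant orbit-sums $\G_{\sigma}$.

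I expect the only genuine obstacle to be the transitivity step in the orbit computation, namely checking that the adjacent transpositions $s_i$ suffice to move the nonzero entries into any prescribed configuration while preserving their order. Everything else is bookkeeping built on Propositions~\ref{seqstandardization} and~\ref{acfqsym} together with the gradedness of the action.
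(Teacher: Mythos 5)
Your proof is correct and follows essentially the same route as the paper: both identify orbits with standardization classes via Proposition~\ref{seqstandardization} and the pair picture $(A,B)$ from Proposition~\ref{acfqsym}, match orbit sums with Formula~\eqref{polyrealfq}, and conclude via the observation that each orbit contains exactly one permutation. The only difference is that you spell out the transitivity of the zero-moving action and the gradedness argument, which the paper leaves implicit.
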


\begin{proof}
The first part and the formula for $\G_{\sigma}$ are a consequence of Proposition~\ref{seqstandardization} and the polynomial realization given by Formula~\eqref{polyrealfq}.
It implies that we have a basis of $\textbf{FQSym}^*$ given by the sum on the orbits and since each orbit exactly contains one permutation, any invariant element is a linear combination of~$\G_{\sigma}$.
\end{proof}

\begin{remark}
We can define, for $r\in \mathcal{N}$, a more general symmetric action on $E$ such that $s_i$ swaps the $i^{th}$ and $(i+1)^{st}$ coordinates if one of these coordinates is strictly less than $r$, and does nothing otherwise. The case $r=1$ thus corresponds to the free quasi-symmetrizing action. We can prove that the subspaces of the invariants for these generalized actions are subalgebras. But they are not subcoalgebras of $\textbf{FQSym}^*$. Thus they do not define Hopf subalgebras.
\end{remark}

\section{The parking quasi-symmetrizing action}
\label{sectionparkingquasi}

In this section, we define an action of the infinite symmetric group on the set of words on positive integers such that the elements of $\PQSymstar$ are the invariant word sums. We first encode bijectively the words on positive integers as bi-words, then define this action on these bi-words. Recall that a set composition of $[n]:=\{1,2,...,n\}$ is an ordered set partition of $[n]$.

\begin{defi}
\label{definitionBW}
Let $\mathrm{BW}$ be the set of all infinite two row arrays $m=\begin{tabular}{|ccc|}
    $L_1$ & $L_2$ & $\cdots$  \\
    $M_1$ & $M_2$ & $\cdots$  
\end{tabular}$ where $L_i$ is a set and $M_i$ is a word on positive integers such that

\begin{itemize}
    \item[$(1)$] the sequence of all non-empty $L_i$ is a set composition of a set $[n]$,
    \item[$(2)$] $M_i$ is a prime parking function of length the number of elements of $L_i$.
\end{itemize}

\noindent The\defn{length of $m$} is the integer $n$ of the first point. Let $\mathrm{BW}_n$ be the set of $m\in \mathrm{BW}$ of length $n$.
\end{defi}

In the sequel, we shall represent an element of $\mathrm{BW}$ as a bi-word: any set $L_i$ will be represented as the concatenation of its elements in increasing order.

\begin{ex}
The two-row array $\begin{tabular}{|cccccccc|}
    $\{3,4,6\}$ & $\{2\}$ & $\emptyset$ & $\{1,7\}$ & $\emptyset$ & $\{5\}$ & $\emptyset$ & $\cdots$ \\
    $121$ & $1$ & $\epsilon$ & $11$ & $\epsilon$ & $1$ & $\epsilon$ & $\cdots$ 
\end{tabular}$ is in $\mathrm{BW}_7$ since the first row is the set composition $(\{3,4,6\},\{2\},\{1,7\},\{5\})$ of $[7]$ and $1$, $11$, and $121$ are prime parking functions. We shall represent it as the bi-word \

$\begin{tabular}{|cccccccc|}
    $346$ & $2$ & $\epsilon$ & $17$ & $\epsilon$ & $5$ & $\epsilon$ & $\cdots$ \\
    $121$ & $1$ & $\epsilon$ & $11$ & $\epsilon$ & $1$ & $\epsilon$ & $\cdots$ 
\end{tabular}\,.$
\end{ex}

In the sequel, an\defn{empty column} is a column whose words are the empty words $\epsilon$. If $u=u_1\dots u_n\in PF_n$ has $k$ prime blocks denoted by $U_i$ 
for all $i\in [k]$, let $P_i$
be the word such that its $j^{th}$ letter, denoted $(P_i)_j$, is the position in $u$ of the $j^{th}$ letter of $U_i$ (see Example \ref{expsi}). 

\begin{defi}
Let $\psi:u\in PF \longmapsto \begin{tabular}{|ccccc|}
    $P_1$ & $\cdots$ & $P_k$ & $\epsilon$ & $\cdots$ \\
    $\mathrm{Park}(U_1)$ & $\cdots$  & $\mathrm{Park}(U_{k})$ & $\epsilon$ & $\cdots$ 
\end{tabular} \in \mathrm{BW}$.
\end{defi}

\begin{ex}
\label{expsi}
Let $u=5412715\in PF$. We have $u^{\uparrow} = 112|4|55|7$ and the four prime blocks of $u$ are $U_1=121$, $U_2=4$, $U_3=55$, and $U_4=7$. These are respectively at positions $P_1=346$, $P_2=2$, $P_3=17$, and $P_4=5$. Then
$\psi(u)~=~\begin{tabular}{|cccccc|}
    $346$ & $2$ & $17$ & $5$ & $\epsilon$ & $\cdots$ \\
    $121$ & $1$ & $11$ & $1$ & $\epsilon$ & $\cdots$ 
\end{tabular}\,.$
\end{ex}

If $w=w_1\dots w_n$ is a word on positive integers such that the prime blocks of $\mathrm{Park}(w)=u_1\dots u_n$ are $U_i$
for all $i\in [k]$, let $d_i:=w_{(P_i)_1} - u_{(P_i)_1} \geq 0$
for all $i\in~[k]$. A consequence of the parkization algorithm is the following.

\begin{lem}
\label{lemmaparkization}
With the notations as before, for all $i\in [k]$, for all $j$, $d_i=w_{(P_i)_j} - u_{(P_i)_j}$ and the $d_i$ are nondecreasing.
\end{lem}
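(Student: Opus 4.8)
The plan is to analyze directly how the parkization algorithm transforms the letters of $w$ that belong to a single prime block, and to show that the algorithm acts on all letters of that block by the same cumulative shift. Recall that the parkization proceeds by a sequence of steps, each of which chooses $d = d(w) \le n$ and decrements by one every letter strictly greater than $d$. The key observation I want to isolate is that within one prime block $U_i$ of the final parking function, the letters of $w$ at the corresponding positions are always either all decremented together or all left fixed at any given step of the algorithm.

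To make this precise I would argue by following a single step of the algorithm and tracking an invariant. At the start, the prime blocks of $\mathrm{Park}(w)$ are determined by where the vertical bars fall in the nondecreasing rearrangement; a bar is placed just before a letter $u_i' = i$ (with $u_i' \neq 1$). The prime block decomposition thus records exactly the places where the nondecreasing rearrangement is "critical." The natural claim to prove is that at every intermediate word $w'$ produced by the algorithm, the letters occupying the positions of a fixed final prime block all exceed, or all fail to exceed, the current threshold $d(w')$ simultaneously; equivalently, the gaps between consecutive (in sorted order) distinct values occurring inside one prime block are never "broken" by a threshold, whereas the thresholds can only fall at the block boundaries. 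This is essentially the statement that the quantity $w_{(P_i)_j} - u_{(P_i)_j}$ does not depend on $j$: since $u_{(P_i)_j}$ is the result of applying the whole decrement history to $w_{(P_i)_j}$, and the decrement history is applied uniformly across the block, the difference $d_i$ is constant in $j$. I would phrase this cleanly by induction on the number of decrementing steps, showing that after each step the difference between the current letter and the original letter is constant on each final prime block.

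For the monotonicity of the $d_i$, the idea is that a higher-indexed prime block sits, in the sorted order, strictly above the lower-indexed ones (the bars mark increasing thresholds), so every decrement that is applied to block $U_i$ is also applied to every later block $U_{i'}$ with $i' > i$: whenever the threshold $d(w')$ is at least as large as the values in $U_i$ so that they get decremented, it is also below the values in the higher blocks, which therefore get decremented as well. Hence the total number of decrements suffered by block $U_{i'}$ is at least that suffered by $U_i$, giving $d_{i'} \ge d_i$, i.e. the sequence $(d_i)$ is nondecreasing. The cleanest formalization is to express $d_i$ as the number of algorithm steps whose threshold lies strictly below the block's values and observe this count is monotone in the block index.

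The main obstacle, I expect, will be bookkeeping the interaction between the dynamically changing threshold $d(w')$ and the static block decomposition of the final output: one must be careful that the prime block boundaries of $\mathrm{Park}(w)$ correspond correctly to the "levels" at which the algorithm's thresholds stabilize, and that no threshold ever separates two positions that land in the same final block. Verifying this requires a precise matching between the bar positions in $u^{\uparrow}$ and the successive values of $d(w')$ across the algorithm, and this is where a careful induction—rather than the informal pattern observed in Example~\ref{exparkization}—is genuinely needed. Once that matching is established, the constancy of $d_i$ in $j$ and the monotonicity of $(d_i)$ both follow.
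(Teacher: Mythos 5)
Your plan is the right one, and you have correctly located the crux: the lemma is equivalent to the assertion that no threshold $d(w')$ arising during the run of the algorithm ever strictly separates two positions whose letters land in the same prime block of $\mathrm{Park}(w)$. (For what it is worth, the paper states this lemma without proof, as a direct consequence of the algorithm, so there is no official argument to compare against.) The problem is that your proposal stops exactly at that crux: you assert that thresholds ``can only fall at the block boundaries,'' concede that verifying this requires ``a precise matching between the bar positions in $u^{\uparrow}$ and the successive values of $d(w')$,'' and then claim that once this matching is established everything follows. But that matching \emph{is} the lemma; deferring it to an unspecified ``careful induction'' means the proof has not been given. As written, your text is a correct reduction of the statement to an equivalent one, not a proof.

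To close the gap you need three concrete facts, none of which appear in your proposal. First, if $d=d(v)\le n$ for a word $v$ of length $n$, then no letter of $v$ equals $d$ and exactly $d-1$ letters of $v$ are smaller than $d$: minimality of $d$ gives $\#\{j : v_j\le d-1\}\ge d-1$, while the definition gives $\#\{j : v_j\le d\}\le d-1$, forcing both counts to equal $d-1$. Second, the successive thresholds are weakly increasing along the run: a step only modifies letters above the current threshold, so the counts $\#\{j : v_j\le c\}$ for $c<d$ are unchanged by the step; consequently a letter currently below the threshold is frozen (its current value is its final value), and a letter currently above the threshold never ends below it. Third, the bars of $u=\mathrm{Park}(w)$ sit exactly at the values $c$ with $\#\{j : u_j\le c\}=c$. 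Combining these: if a step with threshold $d$ separates positions $p$ and $q$, say $v_p<d<v_q$, then $u_p=v_p\le d-1$, $u_q\ge d$, and the positions with final value $\le d-1$ are exactly the $d-1$ positions with current value $<d$, so $\#\{j : u_j\le d-1\}=d-1$; hence $c=d-1$ is a bar value with $u_p\le c<u_q$, and $p,q$ lie in different blocks. This contrapositive of your key claim gives constancy of $d_i$ on each block by summing over the steps, and monotonicity follows because a separating step always decrements the larger letter, so the gap between a letter of a lower block and a letter of a higher block can only shrink, i.e.\ $w_q-w_p\ge u_q-u_p$. Finally, one small slip to fix: you write that $U_i$ gets decremented when the threshold is ``at least as large as'' its values; it is the opposite — letters are decremented precisely when they \emph{exceed} the threshold.
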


Note that $w\in PF$ if and only if $d_i=0$ for all $i\in~[k]$. By convention, let $|\epsilon|:=1$. If $L_p\neq \epsilon$, let $|L_p|$ be the length of $L_p$. Thanks to Lemma \ref{lemmaparkization} we have the following.

\begin{prop}
\label{bijectionphi}
We have a bijection $\phi:$ finite words on $\mathbb{N}\setminus \{0\} \longrightarrow \mathrm{BW}$ defined by $$\phi(w)= \begin{tabular}{|ccccccc|}
    $\epsilon$ & $\cdots$  & $P_1$ & $\epsilon$ & $\cdots$ & $P_2$ & $\cdots$  \\
    $\epsilon$ & $\cdots$ & $\mathrm{Park}(U_1)$ & $\epsilon$ & $\cdots$ & $\mathrm{Park}(U_2)$ & $\cdots$ 
\end{tabular} \,,$$

\noindent where $\phi(w)$ is the bi-word $\psi(\mathrm{Park}(w))$ with $d_i$ empty columns to the left of column $\begin{tabular}{c}
    $P_i$ \\
    $\mathrm{Park}(U_i)$ 
\end{tabular} \,$ for all $i$. If $m=\begin{tabular}{|ccc|}
    $L_1$ & $L_2$ & $\cdots$  \\
    $M_1$ & $M_2$ & $\cdots$ 
\end{tabular} \in \mathrm{BW}_n$ with $k$ non-empty columns of indices $j_1<\dots<j_k$, the word
$w:=\phi^{-1}(m)$ is of length $n$ and defined by putting at positions given by the letters of $L_{j_i}$, for all $i\in [k]$, the letters of $M_{j_i}$ all incremented by $\displaystyle \sum_{p=1}^{j_{i}-1} |L_p|$.
\end{prop}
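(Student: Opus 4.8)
The plan is to show that the explicit map given for $\phi^{-1}$ in the statement is a genuine two-sided inverse of $\phi$, so that both are bijections; I would treat the two compositions separately, since $\phi^{-1}\circ\phi=\mathrm{id}$ is essentially immediate from Lemma~\ref{lemmaparkization} while $\phi\circ\phi^{-1}=\mathrm{id}$ carries the real content. Throughout, write $u=\mathrm{Park}(w)$ with prime blocks $U_1,\dots,U_k$ at position words $P_1,\dots,P_k$, and set $s_i:=\sum_{p<i}|U_p|$, which is the start of the $i$-th prime block of $u^{\uparrow}$ minus one. Two elementary facts drive everything: first, $U_i=\mathrm{Park}(U_i)+s_i$, i.e. the letters of a prime block exceed those of its parkization by the constant $s_i$ (the sorted block begins at value $s_i+1$ and parkization subtracts $s_i$); second, the index of the $i$-th non-empty column of $\phi(w)$ is $j_i=i+d_i$, since it is preceded by $i-1$ non-empty columns and, by construction, by $d_i$ empty ones.

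For $\phi^{-1}\circ\phi=\mathrm{id}$: using $|\epsilon|=1$, the offset attached to the $i$-th non-empty column in the definition of $\phi^{-1}$ is $\sum_{p=1}^{j_i-1}|L_p|=d_i+s_i$, the empties contributing $d_i$ and the earlier blocks $s_i$. Hence $\phi^{-1}$ places at the positions $P_i$ the word $\mathrm{Park}(U_i)+(d_i+s_i)=U_i+d_i$, which by Lemma~\ref{lemmaparkization} is exactly the restriction of $w$ to those positions; ranging over $i$ recovers $w$. This simultaneously shows $\phi$ is injective. That $\phi$ lands in $\mathrm{BW}$ then requires only checking the two defining conditions: the $P_i$ are the blocks of a set composition of $[n]$ because every position lies in exactly one prime block, and each $\mathrm{Park}(U_i)$ is a prime parking function. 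The latter is where the prime structure is used: the sorted $i$-th block of $u^{\uparrow}$ starts at the diagonal value $s_i+1$ and its remaining entries lie strictly below the diagonal, so subtracting $s_i$ yields a parking function whose sorted form has $j$-th entry equal to $1$ for $j=1$ and $<j$ for $j\ge 2$, i.e. a prime one; its length is $|U_i|=|L_i|$ as required.

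The substance is $\phi\circ\phi^{-1}=\mathrm{id}$. Given $m\in\mathrm{BW}_n$ with non-empty columns $(L_{j_i},M_{j_i})$, one must show that $w:=\phi^{-1}(m)$ satisfies $\mathrm{Park}(w)=u$ for the parking function $u$ obtained by placing $M_{j_i}+s_i$ at the positions $L_{j_i}$, and that its prime-block differences are $d_i=j_i-i$. I would split this into two steps. First, assembling prime parking functions with these canonical shifts does yield a parking function with exactly those prime blocks: its sorted form is the concatenation of the shifted sorted blocks, one checks it is nondecreasing and satisfies the parking inequality, and the bars fall precisely at the block boundaries because primality forces each sorted block to begin on the diagonal and to avoid it afterwards; moreover $\mathrm{Park}(M_{j_i}+s_i)=M_{j_i}$ since parkizing a constant upward shift of a parking function returns it. This already identifies $\mathrm{BW}$ with the set of pairs $(u;d_1\le\cdots\le d_k)$.

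Second, and this is the main obstacle, one must prove that shifting the prime blocks of a parking function $u$ upward by a nondecreasing sequence $d_1\le\cdots\le d_k$ produces a word that parkizes back to $u$. I would argue this by induction on $\sum_i d_i$: letting $i_0$ be the least index with $d_{i_0}>0$, a direct count shows every letter $\le s_{i_0}$ comes from the unshifted initial blocks (which form a parking function) while the first shifted block starts above $s_{i_0}+1$, giving $d(w)=s_{i_0}+1$; one parkization step therefore decrements exactly the letters lying in blocks $i_0,\dots,k$ by one, i.e. replaces each shift $d_i$ by $\max(d_i-1,0)$, which keeps the sequence nondecreasing and strictly lowers $\sum_i d_i$. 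The base case $\sum_i d_i=0$ is $\mathrm{Park}(u)=u$. Combining the two steps, $\phi(w)$ has the prime blocks and differences of $u$ prescribed by $m$, hence $\phi(w)=m$, completing the proof that $\phi$ is a bijection.
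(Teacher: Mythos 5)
Your proof is correct, and it is considerably more than the paper itself offers: the paper gives no proof of this proposition at all, simply introducing it with the phrase ``Thanks to Lemma~\ref{lemmaparkization} we have the following,'' and that lemma only yields the easy half of your argument. Your split isolates this asymmetry precisely. The composite $\phi^{-1}\circ\phi=\mathrm{id}$ (hence injectivity) is the part that Lemma~\ref{lemmaparkization} really does give, via the computation $\sum_{p=1}^{j_i-1}|L_p|=s_i+d_i$ and $\mathrm{Park}(U_i)+s_i+d_i=U_i+d_i$. The surjectivity half, $\phi\circ\phi^{-1}=\mathrm{id}$, needs the \emph{converse} of the lemma --- that shifting the prime blocks of a parking function upward by an arbitrary nondecreasing nonnegative sequence produces a word that parkizes back to that parking function --- and this is nowhere proved in the paper. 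Your induction on $\sum_i d_i$, with the observation that $d(w)=s_{i_0}+1$ so that one parkization step replaces each shift $d_i$ by $\max(d_i-1,0)$ while preserving monotonicity, is a correct and self-contained way to establish it; it also makes explicit the identification of $\mathrm{BW}$ with pairs consisting of a parking function and a nondecreasing shift sequence, which is what underlies the whole construction of the parking quasi-symmetrizing action. What the paper's terseness buys is brevity in a section whose point is the action rather than the bijection; what your version buys is an actual proof of the surjectivity that the paper's one-line justification quietly assumes.
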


Note that since the $d_i$ are nondecreasing, there is a unique way to add empty columns to satisfy the statement: put $d_1$ empty columns to the left of the column containing $P_1$ and $d_i-d_{i-1}$ between the columns containing $P_{i-1}$ and $P_i$ for all $i\geq 2$.

\begin{ex}
\label{exbijectionbiwordparking}
Let $w=6412916$. We have $\mathrm{Park}(w)=5412715$. Thanks to Example \ref{expsi}, we have 
$\psi(\mathrm{Park}(w))=
\begin{tabular}{|cccccc|}
    $346$ & $2$ & $17$ & $5$ & $\epsilon$ & $\cdots$ \\
    $121$ & $1$ & $11$ & $1$ & $\epsilon$ & $\cdots$ 
\end{tabular}\,.$ We have $d_1=0$, $d_2=0$, $d_3=1$, and $d_4=2$. Then
$\phi(w)=
\begin{tabular}{|cccccccc|}
    $346$ & $2$ & $\epsilon$ & $17$ & $\epsilon$ & $5$ & $\epsilon$ & $\cdots$ \\
    $121$ & $1$ & $\epsilon$ & $11$ & $\epsilon$ & $1$ & $\epsilon$ & $\cdots$ 
\end{tabular} \,.$

 Conversely, let $m=\begin{tabular}{|cccccccc|}
    $346$ & $2$ & $\epsilon$ & $17$ & $\epsilon$ & $5$ & $\epsilon$ & $\cdots$ \\
    $121$ & $1$ & $\epsilon$ & $11$ & $\epsilon$ & $1$ & $\epsilon$ & $\cdots$ 
\end{tabular}\in~\mathrm{BW}$ and $w:=\phi^{-1}(m)$. The first row tells us that the length of $w$ is $7$. The first column gives us $w=\_\, \_ 1 2 \,\_\, 1 \,\_$ . The second column gives us $w=\_\, 4 1 2 \,\_\, 1 \,\_$ since at position $2$ in $w$ we put the letter $1$, which is the letter of the second row of the second column, incremented by $|346|=3$. The fourth column gives us $w=6 4 1 2 \,\_\, 1 6$ since at positions $1$ and $7$ we put the letter $1$ incremented by $|346|+|2|+|\epsilon|=3+1+1=5$. Finally, the sixth column gives us $w=6412916$ since at position $5$ we put the letter $1$ incremented by $|346|+|2|+|\epsilon|+|17|+|\epsilon|=8$. 
\end{ex}

\begin{remark}
The bijection $\phi$ preserves the size: $\phi$ is a bijection between words of length $n$ on $\mathbb{N}\setminus \{0\}$ and $\mathrm{BW}_n$.
\end{remark}

From now on, we will often forget about the bijection $\phi$ and identify $w$ with $m$ when $\phi(w)=m$ (see Example \ref{exbijectionbiwordparking}).

\begin{prop}
\label{biwordparkization}
Let $u\in PF$ be a parking function. The bi-words in $\mathrm{BW}$ whose parkization is $u$ are those with the same non-empty columns and in the same order as $\phi(u)$.
\end{prop}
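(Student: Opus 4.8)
The plan is to show that, transported through the bijection $\phi$ of Proposition~\ref{bijectionphi}, the parkization of a word corresponds to simply deleting all empty columns of its associated bi-word. Granting this, the statement follows immediately, because for a parking function $u$ the bi-word $\phi(u)$ already has no empty columns interleaved among its non-empty ones (only trailing ones). So the core of the argument is to pin down the effect of $\phi$ on non-empty columns, and the rest is bookkeeping together with the injectivity of $\phi$.

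First I would unwind Proposition~\ref{bijectionphi}. For a word $w$, that proposition describes $\phi(w)$ as the bi-word $\psi(\mathrm{Park}(w))$ into which one inserts $d_i$ empty columns to the left of the column carrying $P_i$ over $\mathrm{Park}(U_i)$. Since inserting empty columns does not change the list of non-empty columns, the sequence of non-empty columns of $\phi(w)$, read in order, is exactly the sequence of non-empty columns of $\psi(\mathrm{Park}(w))$. Next I would record that parkization fixes parking functions: when $u$ is a parking function we have $\mathrm{Park}(u)=u$, so each $d_i=u_{(P_i)_1}-u_{(P_i)_1}=0$ (this is precisely the observation following Lemma~\ref{lemmaparkization}); hence no empty columns are inserted and $\phi(u)=\psi(u)$, a bi-word whose non-empty columns are packed consecutively at the front, followed only by trailing empty columns.

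Putting these together proves both implications. Writing $w:=\phi^{-1}(m)$, the previous two observations show that the non-empty columns of $m=\phi(w)$ coincide, in the same order, with those of $\phi(\mathrm{Park}(w))=\psi(\mathrm{Park}(w))$. Thus if $\mathrm{Park}(w)=u$, then $m$ and $\phi(u)$ share the same non-empty columns in the same order. Conversely, if $m$ and $\phi(u)$ have the same non-empty columns in the same order, then $\psi(\mathrm{Park}(w))$ and $\psi(u)=\phi(u)$ have the same non-empty column sequence; since both have their non-empty columns packed at the front with only trailing empties, they are equal, so $\phi(\mathrm{Park}(w))=\phi(u)$, and the injectivity of $\phi$ forces $\mathrm{Park}(w)=u$. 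The only delicate point, and the mild obstacle, is to keep the accounting of empty versus non-empty columns scrupulously clean and to invoke correctly that $\phi$ and $\psi$ agree on parking functions; once that is done, the proof is a direct unwinding of Proposition~\ref{bijectionphi}.
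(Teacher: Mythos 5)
Your proof is correct and takes essentially the same route as the paper: both arguments unwind Proposition~\ref{bijectionphi}, observing that under $\phi$ parkization amounts to removing the interior empty columns (equivalently, that $\phi(w)$ is $\psi(\mathrm{Park}(w))$ with empty columns inserted, and $\phi=\psi$ on parking functions). The paper states this in three terse sentences; your version adds the explicit two-directional bookkeeping and the appeal to injectivity of $\phi$, which the paper leaves implicit.
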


\begin{proof}
It is a consequence of the bijection $\phi$. The empty columns encode the difference between a word and its parkized word. The parkization algorithm consists of deleting the empty columns appearing to the left of a non-empty column, until there are no such columns anymore.
\end{proof}

\begin{ex}
The two words $2235559$ and $2236669$, which are respectively the two bi-words $\begin{tabular}{|ccccccc|}
    $\epsilon$ & $123$ & $456$ & $\epsilon$ & $7$ & $\epsilon$ & $\cdots$ \\
    $\epsilon$ & $112$ & $111$ & $\epsilon$ & $1$ & $\epsilon$ & $\cdots$ 
\end{tabular}$ and  $\begin{tabular}{|ccccccc|}
    $\epsilon$ & $123$ & $\epsilon$ & $456$ & $7$ & $\epsilon$ & $\cdots$ \\
    $\epsilon$ & $112$ & $\epsilon$ & $111$ & $1$ & $\epsilon$ & $\cdots$ 
\end{tabular}\,,$ have the same parkization $$1124447 = \phi^{-1} \left(\,\begin{tabular}{|ccccc|}
     $123$ & $456$ & $7$ & $\epsilon$ & $\cdots$ \\
     $112$ & $111$ & $1$ & $\epsilon$ & $\cdots$ 
\end{tabular}\,\right) \,.$$
\end{ex}

Proposition~\ref{biwordparkization} shows that a way to move between all words having the same parkization is to exchange in a bi-word empty columns with non-empty ones.

\begin{prop}
\label{defactionparkingtranspo}
We define an action $\cdot$ of $\frak{S}_{\infty}$ on $\mathrm{BW}$ (and thus on words on $\mathbb{N}\setminus \{0\}$) by giving the action of the generators $s_i$. The elementary transposition $s_i$ acts on $m\in \mathrm{BW}$ by swapping the $i^{th}$ and $(i+1)^{st}$ columns if one of these columns is empty, and doing nothing otherwise.
\end{prop}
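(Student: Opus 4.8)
The plan is to follow the model of the proof of Proposition~\ref{acfqsym} and avoid checking the Moore-Coxeter relations directly, instead exhibiting a bijection under which $\cdot$ becomes a manifest permutation action. First I would check that the operation preserves $\mathrm{BW}$: swapping an empty column with an adjacent column leaves the ordered sequence of all non-empty columns unchanged, so conditions $(1)$ and $(2)$ of Definition~\ref{definitionBW} still hold and the result is again an element of $\mathrm{BW}$.

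Next I would set up a bijection between $\mathrm{BW}$ and the set of pairs $(A,B)$ whose first coordinate $A$ is an almost-zero sequence of $0$'s and $1$'s and whose second coordinate $B$ is a finite sequence of non-empty columns (that is, a set composition of some $[n]$ together with a prime parking function on each block) of length equal to the number of $1$'s in $A$. This bijection sends $m\in\mathrm{BW}$ to the pair $(A,B)$, where $A$ records by a $1$ each non-empty column of $m$ and by a $0$ each empty column, read from left to right, and $B$ is the sequence of non-empty columns of $m$ listed in their order of appearance. The inverse map places the columns of $B$ at the positions marked $1$ in $A$ and empty columns at the positions marked $0$, so this is indeed a bijection.

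The key observation is then that, under this bijection, $\cdot$ coincides with the action on pairs that permutes the entries of $A$ and fixes $B$, which is plainly a $\frak{S}_{\infty}$-action. I would verify this by comparing the two operations on positions $i,i+1$ case by case: if both columns are non-empty or both empty, the defining rule does nothing, while permuting the two equal entries of $A$ (two $1$'s or two $0$'s) likewise leaves the pair unchanged; if exactly one of the two columns is empty, the defining rule swaps the two columns, while swapping the corresponding $0$ and $1$ in $A$ and fixing $B$ moves the non-empty column past the empty one, producing the same bi-word. Since these exhaust all cases, the two operations agree.

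The only point requiring care is this case analysis, and in particular the observation that the relative order of the non-empty columns — hence $B$ — is never disturbed, because a transposition $s_i$ is only permitted to move a non-empty column across an empty one. Once the correspondence is established, the fact that permuting the entries of a sequence defines a group action of $\frak{S}_{\infty}$ immediately yields that $\cdot$ is an action, with no need to verify the braid relations by hand.
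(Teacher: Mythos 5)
Your proof is correct and takes essentially the same route as the paper: the paper's proof of Proposition~\ref{defactionparkingtranspo} simply invokes the argument of Proposition~\ref{acfqsym}, namely the bijection $m \mapsto (A,B)$ with $A$ a $0$/$1$ almost-zero sequence marking the empty columns and $B$ the sequence of non-empty columns, under which $\cdot$ becomes the place-permutation action on $A$ with $B$ fixed. Your spelled-out case analysis and the check that $\mathrm{BW}$ is preserved are exactly the details the paper leaves implicit.
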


\begin{proof}
    A very similar proof to the one in Proposition~\ref{acfqsym} can be made.
\end{proof}

\begin{defi}
The above action, and its extension by linearity to sums of bi-words, is called the\defn{parking quasi-symmetrizing action}.
\end{defi}

\begin{ex}
We have

\begin{equation*}
 s_i \,\cdot\, \begin{tabular}{|ccccccc|}
    $\epsilon$ & $123$ & $456$ & $\epsilon$ & $7$ & $\epsilon$ & $\cdots$\\
    $\epsilon$ & $112$ & $111$ & $\epsilon$ & $1$ & $\epsilon$ & $\cdots$
\end{tabular} 
=  
\left\{
\begin{tabular}{lllll}
     \begin{tabular}{|ccccccc|}
    $123$ & $\epsilon$ & $456$ & $\epsilon$ & $7$ & $\epsilon$ & $\cdots$\\
    $112$ & $\epsilon$ & $111$ & $\epsilon$ & $1$ & $\epsilon$ & $\cdots$
\end{tabular}   $\,\text{if }\,i=1$,\\
\\
     \begin{tabular}{|ccccccc|}
    $\epsilon$ & $123$ & $\epsilon$ & $456$ & $7$ & $\epsilon$ & $\cdots$\\
    $\epsilon$ & $112$ & $\epsilon$ & $111$ & $1$ & $\epsilon$ & $\cdots$
\end{tabular} $\,\text{if }\,i=3$, \\
\\
\begin{tabular}{|ccccccc|}
    $\epsilon$ & $123$ & $456$ & $7$ & $\epsilon$ & $\epsilon$ & $\cdots$\\
    $\epsilon$ & $112$ & $111$ & $1$ & $\epsilon$ & $\epsilon$ & $\cdots$
\end{tabular}  $ \,\text{if }\,i=4$,\\
\\
    \begin{tabular}{|ccccccc|}
    $\epsilon$ & $123$ & $456$ & $\epsilon$ & $\epsilon$ & $7$ & $\cdots$\\
    $\epsilon$ & $112$ & $111$ & $\epsilon$ & $\epsilon$ & $1$ & $\cdots$
\end{tabular}   $\,\text{if }\,i=5$,\\
\\
    \begin{tabular}{|ccccccc|}
    $\epsilon$ & $123$ & $456$ & $\epsilon$ & $7$ & $\epsilon$ & $\cdots$\\
    $\epsilon$ & $112$ & $111$ & $\epsilon$ & $1$ & $\epsilon$ & $\cdots$
\end{tabular}  \,\text{otherwise}.
\end{tabular}
\right .   
\end{equation*}

\begin{equation*}
\text{Meaning}\,\,\, s_i \,\cdot\, 2235559 = s_i \,\cdot\, \begin{tabular}{|ccccccc|}
    $\epsilon$ & $123$ & $456$ & $\epsilon$ & $7$ & $\epsilon$ & $\cdots$\\
    $\epsilon$ & $112$ & $111$ & $\epsilon$ & $1$ & $\epsilon$ & $\cdots$
\end{tabular} 
=  
\left\{
\begin{tabular}{lllll}
$1125559  \,\,\text{if }\,i=1$ ,\\
$2236669  \,\,\text{if }\,i=3$ , \\
$2235558   \,\,\text{if }\,i=4$ ,\\
$223555.10   \,\,\text{if }\,i=5$ ,\\
$2235559  \,\,\text{otherwise} $.
\end{tabular}
\right .   
\end{equation*}
\end{ex}

\begin{thm}
\label{thmpolynomialrealization}
The orbit of $w$, denoted by $\mathrm{Orb}(w)$, is the set of words whose parkization is $\mathrm{Park}(w)$. If $u\in PF$, we have
\begin{equation*}
 \G_u =\sum_{w\,, \,w\in \mathrm{Orb}(u)} w .  
\end{equation*}
An (infinite) sum of words on positive integers is in $\textbf{PQSym}^*$ if and only if it is invariant under the parking quasi-symmetrizing action.
\end{thm}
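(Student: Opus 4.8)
The plan is to transport the entire statement through the bijection $\phi$ of Proposition~\ref{bijectionphi} and to argue in terms of bi-words, where the action has the transparent combinatorial description of Proposition~\ref{defactionparkingtranspo}. First I would establish the orbit description. For the inclusion $\mathrm{Orb}(w)\subseteq\{w'\,:\,\mathrm{Park}(w')=\mathrm{Park}(w)\}$, observe that under $\phi$ each generator $s_i$ either fixes $m=\phi(w)$ (both involved columns non-empty, or both empty) or swaps an empty column with a non-empty one. In every case the sequence of non-empty columns of $m$, read left to right, is unchanged. By Proposition~\ref{biwordparkization} the parkization of a bi-word depends only on this ordered sequence of non-empty columns, so $\mathrm{Park}$ is constant along orbits.

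For the reverse inclusion I would show that all bi-words sharing a prescribed ordered sequence of non-empty columns lie in a single orbit, by reducing each of them to the canonical form $\psi(u)=\phi(u)$ with $u=\mathrm{Park}(w)$, in which the $k$ non-empty columns occupy positions $1,\dots,k$. Since a bi-word has only finitely many non-empty columns while all but finitely many columns are empty, I can slide each non-empty column leftwards past empty columns one step at a time: if the leftmost not-yet-placed non-empty column sits at position $i+1$ and position $i$ is empty, then $s_i$ moves it to position $i$. Placing the columns from left to right in this way brings them to positions $1,\dots,k$ without altering their order, producing exactly $\psi(\mathrm{Park}(w))=\phi(u)$. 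As the action is by a group, reachability is symmetric and transitive, so any two bi-words with the same ordered non-empty columns share this canonical form and hence lie in the same orbit. By Proposition~\ref{biwordparkization} this orbit is precisely the set of bi-words, equivalently words, with parkization $\mathrm{Park}(w)$. This reachability argument is where the proof really has content; the rest is bookkeeping, and is entirely parallel to the $\textbf{FQSym}^*$ case (Proposition~\ref{acfqsym} and Theorem~\ref{thmFQSym}), which can serve as a template.

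The formula for $\G_u$ is then immediate. For $u\in PF$ we have $\mathrm{Park}(u)=u$, so the orbit description gives $\mathrm{Orb}(u)=\{w\,:\,\mathrm{Park}(w)=u\}$; comparing with the polynomial realization~\eqref{Gsumwords} yields $\G_u=\sum_{w\in\mathrm{Orb}(u)}w$.

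Finally, for the invariance characterization I would argue degree by degree, which is legitimate because the generators $s_i$ preserve the length of words and $\PQSymstar$ is graded by length. In each degree $n$ there are finitely many orbits, indexed by $PF_n$, and $(\G_u)_{u\in PF_n}$ is a basis of the degree-$n$ component of $\PQSymstar$. An (infinite) word sum of degree $n$ is invariant under the parking quasi-symmetrizing action exactly when its coefficient function is constant on each orbit, that is, when it equals $\sum_{u\in PF_n}c_u\,\G_u$ for scalars $c_u$; such an element is a finite linear combination of the $\G_u$ and therefore lies in $\PQSymstar$. Conversely, any element of $\PQSymstar$ is, in each degree, a finite combination of the orbit sums $\G_u$ and is thus invariant. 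Summing over all degrees gives the stated equivalence.
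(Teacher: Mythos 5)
Your proof is correct and follows essentially the same route as the paper: transporting everything through the bijection $\phi$, using Proposition~\ref{biwordparkization} together with the realization \eqref{Gsumwords}, and deducing the invariance characterization from the fact that each orbit contains exactly one parking function. The only difference is one of detail: you spell out the reachability argument (sliding non-empty columns left past empty ones to reach the canonical form $\phi(\mathrm{Park}(w))$), which the paper leaves implicit in its remark following Proposition~\ref{biwordparkization}.
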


\begin{proof}
The first part and the formula for $\G_u$ are a consequence of Proposition~\ref{biwordparkization} and the polynomial realization given by Formula~\eqref{Gsumwords}.
It implies that we have a basis of $\PQSymstar$ given by the sum on the orbits and since each orbit exactly contains one parking function, any invariant element is a linear combination of~$\G_u$.
\end{proof}

\section{The Hopf algebras ${\PQSymstar}^r$}
\label{sectionactionparkingrR}

In this section, we generalize the parking quasi-symmetrizing action with a parameter $r\in\mathcal{N}$, where the case $r=1$ gives back the parking quasi-symmetrizing action. In the sequel, $r\in\mathcal{N}$.

\begin{prop}
We define an action $\underset{r}{\cdot}$ of $\frak{S}_{\infty}$ on $\mathrm{BW}$ (and thus on words on $\mathbb{N}\setminus \{0\}$) by making the elementary transpositions $s_i$ act on $m\in \mathrm{BW}$ by swapping the $i^{th}$ and $(i+1)^{st}$ columns if one of these columns has words of length strictly smaller than $r$, and doing nothing otherwise.
\end{prop}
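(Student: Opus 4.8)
The plan is to reduce the verification that $\underset{r}{\cdot}$ is a genuine group action to the same bijective argument used in Proposition~\ref{acfqsym} and Proposition~\ref{defactionparkingtranspo}, rather than checking the Moore--Coxeter relations by hand. First I would describe the relevant invariant data of an element $m\in\mathrm{BW}$: to each column I attach a label recording whether both of its words have length at least $r$ (call such a column \emph{heavy}) or whether one of its words has length strictly smaller than $r$ (call it \emph{light}); note that every empty column is automatically light, since $|\epsilon|=1$ and we only consider $r\geq 1$. The rule defining $s_i$ is then: swap columns $i$ and $i+1$ if at least one of them is light, and do nothing if both are heavy. The key observation is that this rule depends only on the sequence of heavy/light labels and never on the actual content of a heavy column, exactly as in the $r$-quasi-symmetrizing action of Equation~\eqref{actionrwqsym}.

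The main step is to exhibit $\underset{r}{\cdot}$ as the componentwise image of the standard permutation action on a product set. I would set up a bijection between $\mathrm{BW}$ and the set of pairs $(A,B)$, where $A$ is an almost-zero sequence over $\{0,1\}$ recording, position by position, which columns are heavy (put a $1$ for a heavy column and a $0$ for a light one), and $B$ records the ordered list of the actual columns together with their positions among the light-and-heavy pattern. Since there are only finitely many heavy columns (a bi-word has finitely many non-empty columns, and only those can be heavy), $A$ has only finitely many $1$'s, so it lies in a set on which $\mathfrak{S}_\infty$ acts by permuting coordinates in the usual way. Under this correspondence the prescription for $s_i$ is precisely the induced permutation action on the $\{0,1\}$-pattern together with the reordering it forces on the light entries, while the relative order of the heavy columns is never disturbed. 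Because permuting coordinates of a sequence is a bona fide $\mathfrak{S}_\infty$-action and the heavy columns are carried along as passive labels, the Moore--Coxeter relations hold automatically, which is what establishes that $\underset{r}{\cdot}$ is an action.

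I expect the main obstacle to be bookkeeping rather than anything conceptual: one must check that when $s_i$ acts trivially (both columns heavy) the encoding is genuinely fixed, and that when it swaps two columns of which at least one is heavy, the resulting array still lies in $\mathrm{BW}$ (the set-composition condition and the prime-parking-function condition on the rows are preserved, since swapping columns permutes the blocks $L_i$ and their associated words $M_i$ without altering any individual block). The delicate point is the interaction between heavy and light columns: I would verify that the invariance of the heavy columns' relative order under the action is compatible with the product decomposition, so that the map $m\mapsto(A,B)$ is equivariant. Once equivariance is confirmed, the proof is immediate, and I would remark that taking $r=1$ recovers Proposition~\ref{defactionparkingtranspo}, since then every column is light and the action is the full column-permuting action already defined.
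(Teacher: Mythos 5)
Your overall strategy --- encode a bi-word as a pair and transport a product action, exactly as in the paper's proof of Proposition~\ref{acfqsym} (the paper in fact gives no separate proof for this proposition; its $r=1$ analogue, Proposition~\ref{defactionparkingtranspo}, simply refers back to that argument) --- is the right one, but your particular encoding does not deliver the reduction. In Proposition~\ref{acfqsym} the trick works because all movable entries are identical (they are zeros), so the pair (the $\{0,1\}$-pattern, the ordered list of non-zero entries) loses no information, and the rule ``do nothing when both entries are non-zero'' coincides with swapping two identical symbols $1$ in the pattern. For $r\geq 2$ the symmetry is broken on the other side: the light columns are the movable ones but they are \emph{not} identical to one another, so your first coordinate $A$ (the heavy/light $\{0,1\}$-pattern) cannot see how the light contents are rearranged --- for two adjacent distinct light columns $P\neq Q$, the transposition $s_i$ exchanges them while $A=(\dots,0,0,\dots)$ is fixed. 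Consequently your second coordinate $B$ must carry the light contents, and then it is not ``carried along as a passive label'': its entries are reordered whenever two light columns cross. You concede this (``together with the reordering it forces on the light entries''), but that reordering is defined by the very rule whose Moore--Coxeter relations are to be verified, so the argument is circular: the relations hold automatically on $A$, and nothing has been established for the light part, which is exactly where the content of the proposition lies.

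The repair is small but essential: collapse only the \emph{heavy} columns. Encode $m\in\mathrm{BW}$ as the pair $(A,B)$ where $A$ is the sequence whose $i$-th entry is the actual content of the $i$-th column when that column is light, and a fixed placeholder symbol $\star$ when it is heavy, and where $B$ is the ordered list of the heavy columns' contents. This is a bijection (refill the $\star$'s from $B$ in order), and it is equivariant for the generators: when both columns are heavy, $s_i$ swaps two identical $\star$'s in $A$, i.e.\ does nothing, matching the rule; when at least one is light, $s_i$ genuinely swaps the two entries of $A$, and $B$ is untouched because adjacent heavy columns are never exchanged, so the relative order of heavy columns is invariant. Hence the $r$-action is the transport of (standard coordinate action on $A$) $\times$ (trivial action on $B$), a bona fide $\mathfrak{S}_{\infty}$-action, and your bookkeeping checks (set composition and prime parking conditions preserved) finish the job. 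Two smaller corrections: empty columns are light because their words have length $0$ --- the convention $|\epsilon|:=1$ is bookkeeping for Proposition~\ref{bijectionphi}, not a word length, and under your reading empty columns would fail to be light when $r=1$; and for $r=1$ it is not true that ``every column is light'' --- only the empty ones are, which is precisely why $r=1$ recovers Proposition~\ref{defactionparkingtranspo}, an action which is not the full column-permuting action.
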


\begin{defi}
The above action, and its extension by linearity to sum of bi-words, is called the\defn{$r$-parking quasi-symmetrizing action} (abbreviated as\defn{$r$-action} in the sequel).
\end{defi}

\begin{ex}
We have
$s_2  \,\underset{2}{\cdot}\, \begin{tabular}{|ccccc|}
     $123$ & $456$ & $7$ & $\epsilon$ & $\cdots$ \\
     $112$ & $111$ & $1$ & $\epsilon$ & $\cdots$ 
\end{tabular} = \begin{tabular}{|ccccc|}
     $123$ & $7$ & $456$ & $\epsilon$ & $\cdots$ \\
     $112$ & $1$ & $111$ & $\epsilon$ & $\cdots$ 
\end{tabular}$ , meaning 
$s_2  \,\underset{2}{\cdot}\, 1124447 = 1125554$.
\end{ex}

\begin{remark}
\label{remarkinclusionofspaces}
 The case $r=1$ is the parking quasi-symmetrizing action defined above and for all $r\geq 2$, if a sum of elements of $\mathrm{BW}$ (or words on positive integers) is invariant under the $r$-action, then it is invariant under the $(r-1)$-action. Indeed, the $r$-action allows us to move between bi-words of $\mathrm{BW}$ having the same non-empty columns whose non-empty columns with words of length at least $r$ appear in the same order.
\end{remark}

\begin{defi}
We define ${\PQSymstar}^r$ as the linear subspace of $\PQSymstar$ of the invariants under the $r$-action. Their elements are the\defn{$r$-parking quasi-symmetric functions}.
\end{defi}

Thanks to Remark~\ref{remarkinclusionofspaces}, we obtain

\begin{prop}
We have an infinite chain of nested graded subspaces
\begin{equation*}
  \PQSymstar = {\PQSymstar}^1  \supseteq {\PQSymstar}^2 \supseteq \dots  \supseteq {\PQSymstar}^r \supseteq \dots  \supseteq {\PQSymstar}^{\infty} .
\end{equation*}
\end{prop}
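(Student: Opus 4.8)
The plan is to prove the chain of inclusions
$$\PQSymstar = {\PQSymstar}^1 \supseteq {\PQSymstar}^2 \supseteq \dots \supseteq {\PQSymstar}^r \supseteq \dots \supseteq {\PQSymstar}^{\infty}$$
by showing that each space in the chain is contained in the previous one, and that all of them are graded subspaces. The key observation, already isolated in Remark~\ref{remarkinclusionofspaces}, is a comparison between the $r$-action and the $(r-1)$-action at the level of the generators $s_i$. The strategy is to argue that every move permitted by the $r$-action is also permitted by the $(r-1)$-action, so an element invariant under the former is automatically invariant under the latter.

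First I would make the inclusion ${\PQSymstar}^{r} \subseteq {\PQSymstar}^{r-1}$ precise. Fix $r \geq 2$. The transposition $s_i$ acts nontrivially under the $r$-action on a bi-word $m \in \mathrm{BW}$ exactly when the $i$th or $(i+1)$st column has words of length strictly smaller than $r$; under the $(r-1)$-action the same $s_i$ acts nontrivially exactly when one of these columns has words of length strictly smaller than $r-1$. Since length strictly smaller than $r-1$ implies length strictly smaller than $r$, whenever the $(r-1)$-action moves a pair of columns the $r$-action moves them in exactly the same way (an empty-or-short column is swapped with its neighbour), and whenever the $r$-action fixes $m$ under $s_i$ so does the $(r-1)$-action. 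Consequently every $(r-1)$-orbit is a union of $r$-orbits, equivalently the equivalence relation generated by the $(r-1)$-action refines that generated by the $r$-action. Hence if a sum of words is constant on each $(r-1)$-orbit — that is, is $(r-1)$-invariant — it is in particular constant on each $r$-orbit, so it is $r$-invariant; this gives ${\PQSymstar}^{r} \subseteq {\PQSymstar}^{r-1}$. The endpoints are handled by definition: ${\PQSymstar}^1 = \PQSymstar$ because the $1$-action is the parking quasi-symmetrizing action of Section~\ref{sectionparkingquasi}, whose invariants are all of $\PQSymstar$ by Theorem~\ref{thmpolynomialrealization}, and ${\PQSymstar}^{\infty}$ sits at the bottom as the intersection of all the ${\PQSymstar}^r$.

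Second, I would verify that each ${\PQSymstar}^r$ is graded, i.e. compatible with the grading by the length of parking functions. This is where the bijection $\phi$ of Proposition~\ref{bijectionphi} and its size-preservation (the remark following Example~\ref{exbijectionbiwordparking}) do the work: the $r$-action only permutes columns of a bi-word, leaving the multiset of its nonempty columns unchanged, and $\phi$ identifies words of length $n$ with $\mathrm{BW}_n$. Since the action preserves length, it preserves the homogeneous components, so the invariant subspace ${\PQSymstar}^r$ decomposes as the direct sum of its intersections with the homogeneous components of $\PQSymstar$. Therefore the inclusions above are inclusions of graded subspaces.

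The only mild subtlety — not really an obstacle — is the bookkeeping that the $r$-action is genuinely a restriction of the allowed moves as $r$ grows, rather than a different rule. The cleanest way to handle this is to phrase everything in terms of the orbit equivalence relations and their refinement, as above, so that no orbit-by-orbit calculation is needed; the containment of spaces then follows formally from the refinement of partitions. Once the refinement statement is in place the proposition is immediate, and most of the content is already recorded in Remark~\ref{remarkinclusionofspaces}, so the proof is short.
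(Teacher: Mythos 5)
Your overall strategy is the paper's own: the proposition is deduced from Remark~\ref{remarkinclusionofspaces}, i.e.\ from comparing the generator moves of the $(r-1)$- and $r$-actions, plus the observation that the actions preserve length so that the invariant subspaces are graded. Your opening comparison of moves is correct: every swap permitted by the $(r-1)$-action is permitted by the $r$-action, and every $s_i$-fixed bi-word of the $r$-action is fixed by the $(r-1)$-action.

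However, from this correct premise your argument then runs backwards, and the step you actually write down is false. Since every $(r-1)$-move is an $r$-move, the $(r-1)$-orbit of a bi-word is \emph{contained} in its $r$-orbit; equivalently, each $r$-orbit is a union of $(r-1)$-orbits. This is what ``the $(r-1)$-relation refines the $r$-relation'' means, so your sentence ``every $(r-1)$-orbit is a union of $r$-orbits'' asserts the opposite and contradicts the refinement clause you attach to it with ``equivalently.'' The correct consequence for invariants is: a series whose coefficients are constant on the coarser $r$-orbits is a fortiori constant on the finer $(r-1)$-orbits, i.e.\ $r$-invariant $\Rightarrow$ $(r-1)$-invariant, and this is what yields ${\PQSymstar}^{r}\subseteq{\PQSymstar}^{r-1}$. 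What you wrote instead --- ``if a sum of words is constant on each $(r-1)$-orbit \dots\ it is in particular constant on each $r$-orbit, so it is $r$-invariant'' --- is the converse implication, and it is false: for example $\G_{3114}$ is $1$-invariant (it is constant on parkization classes, by Theorem~\ref{thmpolynomialrealization}) but not $2$-invariant, since by Proposition~\ref{propreduce} the $2$-invariant element supported on its $2$-orbit is $\G_{3114}^{(2)}=\G_{3114}+\G_{1224}+\G_{1332}+\G_{4113}+\G_{4221}+\G_{2331}$. Note also that the implication you assert would give ${\PQSymstar}^{r-1}\subseteq{\PQSymstar}^{r}$, so your final line ``this gives ${\PQSymstar}^{r}\subseteq{\PQSymstar}^{r-1}$'' is a non sequitur relative to your own intermediate steps; you land on the right inclusion only because the direction flips twice. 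The same correction handles the bottom of the chain: ${\PQSymstar}^{\infty}\subseteq{\PQSymstar}^{r}$ follows because every $r$-move is an $\infty$-move (your identification of ${\PQSymstar}^{\infty}$ with the intersection of all the ${\PQSymstar}^{r}$ is asserted rather than proved, and is not needed). Your grading argument via the length-preservation of the action is fine as written.
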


We now study these subspaces. To describe a basis (Proposition~\ref{propbasepqsymr}), we introduce the following definition.

\begin{defi}
An\defn{$r$-bi-word of length $n$} is a pair $(I,\lambda)$ of two finite two row arrays of the form 
\begin{tabular}{|cccc|}
    $L_1$ & $L_2$ & $\cdots$  & $L_k$ \\
    $M_1$ & $M_2$ & $\cdots$  & $M_k$
\end{tabular} where $M_i$ satisfies point $(2)$ of Definition~\ref{definitionBW} and such that
\begin{itemize}
    \item the $L_i$ appearing in $I$ are sets with at least $r$ elements and those appearing in $\lambda$ are non-empty sets with strictly fewer than $r$ elements,
    \item the sets of the first row of $\lambda$ are sorted such that the smallest letters of these sets form an increasing sequence,
    \item the sequence of all sets of the first row of $I$ and the first row of $\lambda$ form a set composition of $[n]$.
\end{itemize}
\end{defi}

As previously done, any set $L_i$ will be represented as the concatenation of its elements in increasing order.
We will identify the $r$-bi-word $(I,\lambda)$ with the bi-word in $\mathrm{BW}$ which is the concatenation of $I$ and $\lambda$ to which we add an infinite number of empty columns to the right (to obtain an element of $\mathrm{BW}$). In the sequel, $(I,\lambda)$ is always an $r$-bi-word.

\begin{ex}
The pair $(I,\lambda)$ with $I=$ \begin{tabular}{|cc|}
    $36$ & $257$   \\
    $11$ & $121$
\end{tabular}
and $\lambda =$ \begin{tabular}{|cc|}
    $1$ & $4$  \\
    $1$ & $1$ 
\end{tabular} is a $2$-bi-word of length $7$. We have the identification $\left(\,\,\begin{tabular}{|cc|}
    $36$ & $257$   \\
    $11$ & $121$
\end{tabular} \,\,,\,\,\begin{tabular}{|cc|}
    $1$ & $4$  \\
    $1$ & $1$ 
\end{tabular}\,\, \right) = \begin{tabular}{|cccccc|}
    $36$ & $257$ & $1$ & $4$ & $\epsilon$ & $\cdots$   \\
    $11$ & $121$ & $1$ & $1$ & $\epsilon$ & $\cdots$ 
\end{tabular} \,.$
\end{ex}

Denote by $\mathrm{Orb}^r(m)$ the orbit of the bi-word $m$ under the $r$-action.

\begin{prop}
\label{propbasepqsymr}
The bases of ${\PQSymstar}^r$ are indexed by $r$-bi-words, the homogeneous components of degree $n$ having their bases indexed by $r$-bi-words of length $n$. A basis is given by the vectors 
$$\displaystyle \G_{(I,\lambda)}^{(r)} := \sum_{w\in \mathrm{Orb}^r((I,\lambda))} w .$$
\end{prop}

\begin{prop}
\label{propdimensionpqsymr}
The Hilbert series of ${\PQSymstar}^r$ is $\displaystyle \sum_{n\geq 0}  dim({\PQSymstar_n}^r) t^n =1+ \sum_{n\geq 1} \left(\sum_{k=1}^n A_{n,k}^{\,r}\right) t^n$, where $$\displaystyle A_{n,k}^{\,r} := \sum \limits_{\substack{\{Q_1,Q_2,\dots ,Q_k\}\,\text{partition of} \\ [n] \, \text{in}\,k\,\text{parts}}} \#\{i,\,|Q_i|\geq r\} \,!\,\,\,\prod_{i=1}^k (|Q_i|-1)^{|Q_i|-1} .$$
\end{prop}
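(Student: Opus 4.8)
The plan is to invoke Proposition~\ref{propbasepqsymr} to convert the dimension statement into a counting problem, and then to enumerate $r$-bi-words by grouping them according to their underlying set partition of $[n]$.

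First I would reduce to a count. By Proposition~\ref{propbasepqsymr}, the dimension of the homogeneous component ${\PQSymstar_n}^r$ equals the number of $r$-bi-words of length $n$. The degree-zero term $1$ in the Hilbert series accounts for the empty $r$-bi-word (the case $n=0$), so it remains to show that for $n\geq 1$ the number of $r$-bi-words of length $n$ equals $\sum_{k=1}^n A_{n,k}^{\,r}$.

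Next I would record exactly what data determine an $r$-bi-word $(I,\lambda)$ and exhibit a bijection. By definition, the first rows of $I$ and $\lambda$ together form a set partition $\{Q_1,\dots,Q_k\}$ of $[n]$, the blocks of size at least $r$ belonging to $I$ and the non-empty blocks of size strictly less than $r$ belonging to $\lambda$. The second row assigns to each block $Q_i$ a word $M_i$ which, by point $(2)$ of Definition~\ref{definitionBW}, is an arbitrary prime parking function of length $|Q_i|$. Finally, the columns of $I$ form a set composition, that is, an ordered sequence of its blocks, whereas the columns of $\lambda$ are placed in the forced canonical order (their smallest letters increasing). Conversely, any choice of a set partition of $[n]$, a prime parking function on each block, and a linear order on the blocks of size at least $r$ yields a unique $r$-bi-word; this is the bijection I would use.

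Then I would carry out the count block by block. Fixing a partition $\{Q_1,\dots,Q_k\}$ of $[n]$ into $k$ parts, the blocks of size at least $r$ can be linearly ordered to form $I$ in exactly $\#\{i : |Q_i|\geq r\}!$ ways, while the remaining blocks form $\lambda$ in a single canonical order and contribute no factor; independently, each block $Q_i$ may carry any of $(|Q_i|-1)^{|Q_i|-1}$ prime parking functions, by Proposition~\ref{cardinalparkingfunctions} together with the convention $0^0=1$ that handles singleton blocks (where $PPF_1=\{1\}$). Multiplying gives $\#\{i : |Q_i|\geq r\}!\,\prod_{i=1}^k (|Q_i|-1)^{|Q_i|-1}$ $r$-bi-words with this underlying partition, and summing over all partitions of $[n]$ into $k$ parts yields $A_{n,k}^{\,r}$; summing over $k$ and adding the degree-zero term produces the claimed series. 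The one point I would state with care, rather than a genuine obstacle, is that the assignment above is truly a bijection: distinct orderings of the large blocks give distinct set compositions, hence distinct $I$, because the blocks are disjoint and non-empty, so nothing is overcounted, and the fixed canonical order of the small blocks guarantees each $r$-bi-word is produced exactly once. The rest is a routine product-and-sum computation.
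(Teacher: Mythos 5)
Your proof is correct and follows essentially the same route as the paper: the paper likewise reduces to counting $r$-bi-words of length $n$ via Proposition~\ref{propbasepqsymr} and Proposition~\ref{cardinalparkingfunctions}, merely asserting as ``easy to prove'' the block-by-block count that you spell out explicitly. Your attention to the singleton-block case (the convention $0^0=1$, since $|PPF_1|=1$ while Proposition~\ref{cardinalparkingfunctions} is stated for $n>1$) is a detail the paper glosses over, but there is no substantive difference in approach.
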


\begin{proof}
Recall from Proposition~\ref{cardinalparkingfunctions} that the number of prime parking functions of length $|Q_i|$ is $(|Q_i|-1)^{|Q_i|-1}$. It is easy to prove that the number of $r$-bi-words of length $n$ with $k$ non-empty columns is given by $A_{n,k}^r$, which proves the statement by summing over~$k$.
\end{proof}

\begin{prop}
\label{propreduce}
We have 
\begin{equation}
\label{formR}
\displaystyle \G_{(I,\lambda)}^{(r)} = \sum_{K \in I\,\shuffle \, \lambda^{\sigma}} \G_K ,    
\end{equation}

\noindent where $\lambda^{\sigma}$ is the set of all bi-words whose columns are permutations of those of $\lambda$ and the shuffle is a shuffle of bi-words (the letters are the columns).
\end{prop}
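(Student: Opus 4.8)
The plan is to show that the $r$-orbit of the bi-word $(I,\lambda)$ (identified with the bi-word obtained by concatenating $I$ and $\lambda$ and completing by empty columns) decomposes as a disjoint union of $1$-orbits, one for each $K\in I\,\shuffle\,\lambda^{\sigma}$, and then to sum. Since $\G_{(I,\lambda)}^{(r)}=\sum_{w\in\mathrm{Orb}^r((I,\lambda))}w$ by Proposition~\ref{propbasepqsymr} while each $\G_K=\sum_{w\in\mathrm{Orb}(K)}w$ by Theorem~\ref{thmpolynomialrealization}, the identity \eqref{formR} is equivalent to the set-theoretic equality $\mathrm{Orb}^r((I,\lambda))=\bigsqcup_{K\in I\,\shuffle\,\lambda^{\sigma}}\mathrm{Orb}(K)$.

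The key structural observation is to classify the columns into three types according to the length of their words: the \emph{empty} columns (length $0$), the columns coming from $\lambda$ (lengths in $\{1,\dots,r-1\}$), and the columns coming from $I$ (lengths $\geq r$). Call the first two types \emph{mobile} and the last \emph{fixed}. By definition of the $r$-action, the transposition $s_i$ swaps two adjacent columns exactly when at least one of them has words of length strictly smaller than $r$, that is, exactly when at least one is mobile; in particular it never swaps two fixed columns. Hence the subsequence of fixed columns read from left to right is invariant under the $r$-action and always equals $I$, and the multiset of non-empty mobile columns is invariant and always equals the columns of $\lambda$. This gives the easy inclusion: the sequence of non-empty columns of any element of $\mathrm{Orb}^r((I,\lambda))$ is an interleaving of $I$ (in its fixed order) with some permutation of the columns of $\lambda$, hence an element of $I\,\shuffle\,\lambda^{\sigma}$.

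For the reverse inclusion I would prove a reachability statement: every $K\in I\,\shuffle\,\lambda^{\sigma}$ is the non-empty-column sequence of some element of $\mathrm{Orb}^r((I,\lambda))$. Using that infinitely many empty columns are available and that a small column may be transposed with an adjacent mobile \emph{or} fixed column, one checks that (i) the columns of $\lambda$ can be placed in any prescribed relative order (bring any two adjacent and swap them) and (ii) any small column can be routed across any fixed column; combining (i) and (ii) realizes every interleaving of the ordered word $I$ with an arbitrary permutation of the columns of $\lambda$. Finally, by Proposition~\ref{biwordparkization} two bi-words lie in the same $1$-orbit if and only if they have the same sequence of non-empty columns, so the fibers of the map sending $m$ to its sequence of non-empty columns, restricted to $\mathrm{Orb}^r((I,\lambda))$, are precisely the $1$-orbits $\mathrm{Orb}(K)$, and its image is exactly $I\,\shuffle\,\lambda^{\sigma}$. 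Summing over the fibers then yields \eqref{formR}.

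The main obstacle is the reachability step: making rigorous that the moves generated by the mobile-column transpositions act transitively on the configurations that preserve the relative order of $I$ while freely permuting and interleaving the columns of $\lambda$. The bookkeeping of the infinitely many empty columns, needed so that columns can always be routed around one another without obstruction, together with the verification that distinct $K$ give genuinely disjoint $1$-orbits, so that $I\,\shuffle\,\lambda^{\sigma}$ is read as a set and no $\G_K$ is overcounted, are the points requiring care; everything else follows directly from the type-based description of the $r$-action.
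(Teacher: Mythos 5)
The paper in fact states Proposition~\ref{propreduce} without any proof at all: it is presented as immediate (it is the parking analogue of Hivert's Equation $(25)$ for $QSym^r$), and the later product and coproduct formulas simply rely on it. So there is no argument in the paper to compare yours against; your proposal fills a gap the author left open, and it does so correctly. The reduction of \eqref{formR} to the set-theoretic statement $\mathrm{Orb}^r((I,\lambda))=\bigsqcup_{K\in I\,\shuffle\,\lambda^{\sigma}}\mathrm{Orb}(K)$ is exactly the right content: words are linearly independent, $\G_{(I,\lambda)}^{(r)}$ is the sum over the $r$-orbit (Proposition~\ref{propbasepqsymr}), each $\G_K$ is the sum over a $1$-orbit (Theorem~\ref{thmpolynomialrealization} with Proposition~\ref{biwordparkization}), and the $r$-orbit is a union of $1$-orbits because every $1$-move is an $r$-move (an empty column has length $0<r$) --- a point worth stating explicitly, since it is what legitimizes ``summing over the fibers.'' Your two structural facts are both correct and carry the whole proof: two columns of length at least $r$ can never be exchanged, so the subsequence of such columns is an invariant of the $r$-orbit equal to $I$; and a column of length strictly less than $r$ satisfies the swap condition against \emph{any} neighbor, so it can be walked one position at a time to any prescribed location. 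Consequently the reachability step you flag as the main obstacle is not a genuine one: to realize a given $K\in I\,\shuffle\,\lambda^{\sigma}$, move the columns of $\lambda$ one at a time (say left to right in $K$) to their target slots; each elementary step is a legal swap because the moving column is short, and these moves never disturb the relative order of the columns of $I$. Disjointness of the $1$-orbits for distinct $K$ is automatic since the sequence of non-empty columns is constant on each $1$-orbit. I would only ask you to write out that token-moving induction and the remark about $1$-moves being $r$-moves; with those two sentences added, your argument is a complete and correct proof of the proposition.
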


\begin{ex}
\noindent For example, if $r=2$ we have
\begin{align*}
\G_{\left(\,\,\begin{tabular}{|c|}
    $23$   \\
    $11$
\end{tabular} \,\,,\,\,\begin{tabular}{|cc|}
    $1$ & $4$  \\
    $1$ & $1$ 
\end{tabular}\,\, \right)}^{\,(2)}
&= \G_{\,\,\begin{tabular}{|cccc|}
    $23$ & $1$ & $4$ & $\cdots$  \\
    $11$ & $1$ & $1$ & $\cdots$ 
\end{tabular}} \,+\,
\G_{\,\,\begin{tabular}{|cccc|}
    $1$ & $23$ & $4$ & $\cdots$  \\
    $1$ & $11$ & $1$ & $\cdots$ 
\end{tabular}} \\
&+ \G_{\,\,\begin{tabular}{|cccc|}
    $1$ & $4$ & $23$ & $\cdots$  \\
    $1$ & $1$ & $11$ & $\cdots$ 
\end{tabular}} \,+\,
 \G_{\,\,\begin{tabular}{|cccc|}
    $23$ & $4$ & $1$ & $\cdots$  \\
    $11$ & $1$ & $1$ & $\cdots$ 
\end{tabular}} \\
&+ \G_{\,\,\begin{tabular}{|cccc|}
    $4$ & $23$ & $1$ & $\cdots$  \\
    $1$ & $11$ & $1$ & $\cdots$ 
\end{tabular}} \,+\,
\G_{\,\,\begin{tabular}{|cccc|}
    $4$ & $1$ & $23$ & $\cdots$  \\
    $1$ & $1$ & $11$ & $\cdots$ 
\end{tabular}} \,,
\end{align*}

\noindent that translates as $\G_{3114}^{\,(2)} =  \G_{3114} + \G_{1224} +
\G_{1332} +\G_{4113} +\G_{4221} + \G_{2331}$.
\end{ex}

We now look at the algebra and coalgebra structures of these subspaces.

\begin{prop}
The subspace ${\PQSymstar}^r$ is a subcoalgebra of $\PQSymstar$. We have
$$\Delta \G_{(I , \lambda )}^{(r)}  = \sum_{I_1 ; \, I=I_1 \cdot I_2} \,\, \sum_{K \subset \lambda} \,\G_{\mathrm{Std}((I_1 , K))}^{(r)} \otimes \G_{\mathrm{Std}((I_2 ,(\lambda \backslash K)))}^{(r)} ,$$

\noindent where the standardization $\mathrm{Std}(m)$ of $m\in \mathrm{BW}$ is the standardization of the first row of the bi-word $m$.
\end{prop}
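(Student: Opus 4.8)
The plan is to reduce everything to two ingredients already available: the coproduct $\Delta\G_u=\sum_{u\in a\,\Cup\, b}\G_a\otimes\G_b$ of $\PQSymstar$ on the $\G$-basis, and the reduction formula~\eqref{formR}. The first thing I would do is reinterpret this coproduct in the bi-word language used by the statement. A word $u$ lies in $a\,\Cup\,b$ precisely when the positions of $u$ can be $2$-coloured so that the small-value letters form $a$ and the large-value letters form $b[|a|]$; under $\phi$ the successive values of a parking function are organized into the successive columns of its bi-word, and a prime parking function (a single non-empty column) admits no nontrivial shifted-shuffle factorization. Hence such a value-splitting is exactly a cut of the sequence of non-empty columns of $\phi(u)$ \emph{between} two columns (never inside one). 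Concretely, if $\phi(u)$ has non-empty columns $C_1,\dots,C_k$ then
$$\Delta \G_u = \sum_{j=0}^{k} \G_{\mathrm{Std}(C_1\cdots C_j)} \otimes \G_{\mathrm{Std}(C_{j+1}\cdots C_k)},$$
that is, the coproduct is the deconcatenation of the column word, each half being re-standardized as in the statement (the four terms of $\Delta\G_{612441}$ are exactly the four cuts of its three columns).

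Next I would expand $\G_{(I,\lambda)}^{(r)}=\sum_{K\in I\,\shuffle\,\lambda^\sigma}\G_K$ by~\eqref{formR} and apply the column-deconcatenation above to each $\G_K$. A bi-word $K\in I\,\shuffle\,\lambda^\sigma$ is a shuffle of the columns of $I$ (the blocks of size $\ge r$, kept in their order) with a permutation of the columns of $\lambda$ (the blocks of size $<r$). Cutting such a $K$ between two columns sends the $I$-columns to its left to a prefix $I_1$ and those to its right to a suffix $I_2$, with $I=I_1\cdot I_2$, while the $\lambda$-columns to the left form a sub-collection $K'\subset\lambda$ and those to the right form $\lambda\setminus K'$. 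I would then regroup the double sum $\sum_{K}\sum_{\text{cut}}(\text{left})\otimes(\text{right})$ according to the data $(I=I_1\cdot I_2,\ K')$ instead of $(K,\text{cut})$. For fixed such data the left half ranges over all shuffles of the columns of $I_1$ with a permutation of $K'$, the right half ranges independently over all shuffles of $I_2$ with a permutation of $\lambda\setminus K'$, and the reconstruction $(\text{left},\text{right})\mapsto(K,\text{cut})$ (concatenate the two column words, cut after the left one) is a bijection, so every pair occurs with coefficient one. Applying~\eqref{formR} backwards to the smaller $r$-bi-words $\mathrm{Std}((I_1,K'))$ and $\mathrm{Std}((I_2,\lambda\setminus K'))$, the inner sums collapse to $\G_{\mathrm{Std}((I_1,K'))}^{(r)}$ and $\G_{\mathrm{Std}((I_2,\lambda\setminus K'))}^{(r)}$, which is exactly the claimed formula. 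Since the right-hand side is a sum of tensors $\G^{(r)}\otimes\G^{(r)}$, this simultaneously shows $\Delta(\PQSymstar^r)\subseteq \PQSymstar^r\otimes\PQSymstar^r$, i.e. that $\PQSymstar^r$ is a subcoalgebra.

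I expect the main obstacle to be the bookkeeping of the regrouping step, which is a shuffle--deconcatenation compatibility (``unshuffling a shuffle factors as a shuffle of unshufflings''): one must verify that deconcatenating a shuffle of the $I$- and $\lambda$-columns is the same as deconcatenating $I$ and independently distributing the small columns of $\lambda$, and that each resulting $r$-bi-word on each side is produced exactly once, with matching multiplicities. A secondary technical point, feeding the first step, is to confirm that a cut never falls inside a non-empty column; this is precisely where the connectedness (shifted-shuffle indecomposability) of prime parking functions is used, and it is what makes the deconcatenation land on whole columns. Finally I would check that re-standardizing the two halves of $K$ agrees with the standardizations $\mathrm{Std}((I_1,K'))$ and $\mathrm{Std}((I_2,\lambda\setminus K'))$ of the statement, which holds because standardization depends only on the relative order of the first-row entries and this order is preserved by the cut.
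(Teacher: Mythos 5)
Your proof is correct and takes essentially the same route as the paper: expand $\G_{(I,\lambda)}^{(r)}$ via Formula~\eqref{formR}, write $\Delta \G_C$ as deconcatenation of the non-empty columns of the bi-word (with re-standardization of each half), regroup the double sum over $(C,\text{cut})$ as a sum over $(I_1\cdot I_2=I,\ K\subset\lambda)$ via the evident bijection, and apply \eqref{formR} backwards to collapse the inner sums. The only cosmetic difference is that the paper obtains the column-deconcatenation form of $\Delta\G_C$ by citing Proposition~3.5 of Novelli--Thibon transported through the bijection $\phi$ of Proposition~\ref{bijectionphi}, whereas you rederive it from the shifted-shuffle coproduct and the indecomposability of prime parking functions, which is a harmless (indeed more self-contained) substitution.
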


\begin{proof}

Let $(I,\lambda)$ be a $r$-bi-word with $n$ non-zero columns. Using formula \eqref{formR}, the linearity and the definition of the coproduct in $\PQSymstar$ give
$$ \Delta \G_{(I , \lambda )}^r = \Delta \left(\sum_{C \in I \,\shuffle \, \lambda^{\sigma} } \G_C \right) = \sum_{C \in I \,\shuffle \, \lambda^{\sigma} } \Delta \G_C  .$$

Using bijection $\phi$ of Proposition \ref{bijectionphi} to reformulate Proposition $3.5$ of \cite{novelli2005construction}, we can write the coproduct

$$\Delta \G_C = \sum_{i=0}^{n} \, \G_{\mathrm{Std}([C_1, ..., C_i])} \otimes \G_{\mathrm{Std}([C_{i+1}, ..., C_n])} , $$
 
 \noindent where $C=[C_1,C_2,...,C_n]$ with $C_i$ the $i^{th}$ non-zero column of $C$. Thus

 $$\Delta \G_{(I , \lambda )}^r = \sum_{C \in I \,\shuffle \, \lambda^{\sigma} } \,\sum_{i=0}^{n} \, \G_{\mathrm{Std}([C_1, ..., C_i])} \otimes \G_{\mathrm{Std}([C_{i+1}, ..., C_n])} .$$
 
The set $\{ \,([C_1,...,C_i] , [C_{i+1},...,C_n]),\, \text{for} \, i\in \{0,...,n\} \, \text{and} \, C\in I \shuffle \lambda^{\sigma} \} $ is equal to the set
$\{ (J,J'),\, \text{for}\, J\in I_1 \shuffle K^{\sigma} \, \text{and} \, J'\in I_2 \shuffle (\lambda \backslash K)^{\sigma} \, \text{where} \, I=I_1 \cdot I_2 \, \text{and} \, K\subset \lambda  \}$. Using this equality, we have

\begin{align*}
\Delta \G_{(I , \lambda )}^r  &= \sum_{I_1 ; \, I=I_1 \cdot I_2} \,\, \sum_{K \subset \lambda}\, \sum_{J\in I_1 \shuffle K^{\sigma}} \, \sum_{J'\in I_2 \shuffle (\lambda \backslash K)^{\sigma}} \, \G_{\mathrm{Std}(J)} \otimes \G_{\mathrm{Std}(J')} \\
 &= \sum_{I_1 ; \, I=I_1 \cdot I_2} \,\, \sum_{K \subset \lambda}\, \left( \sum_{J\in I_1 \shuffle K^{\sigma}} \,\G_{\mathrm{Std}(J)} \right) \otimes \left( \sum_{J'\in I_2 \shuffle (\lambda \backslash K)^{\sigma}} \, \G_{\mathrm{Std}(J')} \right) \\
  &= \sum_{I_1 ; \, I=I_1 \cdot I_2} \,\, \sum_{K \subset \lambda} \,\G_{\mathrm{Std}((I_1 , K))}^r \otimes \G_{\mathrm{Std}((I_2 ,(\lambda \backslash K)))}^r .
\end{align*}
\end{proof}

To prove that ${\PQSymstar}^r$ is a subalgebra of $\PQSymstar$, we first give the following easy result. Although it is quite long to write, it is just to understand the concatenation of two words at the level of their associated bi-words (see Example \ref{exconcpro}). 

\begin{lem}
\label{lemprodconc}
Let $m\in \mathrm{BW}_n$ with first row given by the $L_i$ for $i\geq 1$. Let $k\leq n$. As before, we identify $m$ with the word on $\mathbb{N}\setminus \{0\}$ that bijectively maps to $m$ using $\phi$. We write $m=m_1\cdot m_2$ where the length of $m_1$ is $k$. Then the bi-word $m_1$ is obtained with the following algorithm :
\begin{itemize}
    \item[$\bullet$] First form a finite bi-word $m'$ by scanning from left to right the non-empty columns of $m$ that contain in the first row letters that are less or equal to $k$, keep only these letters and keep the letters that are below them on the second row but incremented by $\displaystyle \sum_{p=1}^{i-1} |L_p|$ if they are in the $i^{th}$ column of $m$.
    \item[$\bullet$] In general $m'$ will not be in $\mathrm{BW}$, as the words of the second row will not always be prime parking functions. The way to make $m'$ an element of $\mathrm{BW}$ is to add empty columns. Each empty column that is at the left of a non-empty column $C_i$ of $m'$ will decrease by one all the letters of the word of the second row of $C_i$. There is a unique choice of adding empty columns to make these latter words prime parking functions, and it may split some of the columns of $m'$.
\end{itemize} 

We can obtain $m_2$ with the same algorithm but at the beginning we keep only the letters that are bigger than $k$, then decrease all these letters by $k$.
\end{lem}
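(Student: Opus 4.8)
The plan is to reduce the statement about the concatenation of words to a statement about bi-words via the bijection $\phi$, and then to verify that the described algorithm correctly reconstructs the bi-word of the left factor $m_1$ (the case of $m_2$ being symmetric). Recall from Proposition~\ref{bijectionphi} that if $m\in \mathrm{BW}_n$ has non-empty columns of indices $j_1<\dots<j_k$, then $w=\phi^{-1}(m)$ is obtained by placing at the positions listed in $L_{j_i}$ the letters of $M_{j_i}$ all incremented by $\sum_{p=1}^{j_i-1}|L_p|$. So the first step is to describe explicitly, in terms of these placed letters, the prefix $w_1\dots w_k$ of $w$: it consists exactly of those letters $w_p$ with $p\le k$, i.e.\ those coming from positions $p\in L_{j_i}$ with $p\le k$. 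This immediately justifies the first bullet of the algorithm: we keep only the first-row entries $\le k$, together with the second-row letters below them incremented by the appropriate block-length offset, which is precisely the value of that letter in $w$.

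Next I would analyse the parkization of this prefix at the level of bi-words. The intermediate object $m'$ produced by the first bullet records, in each retained column, the \emph{actual values} of the letters of $w_1\dots w_k$, laid out at their original column indices. By Proposition~\ref{biwordparkization} and the discussion preceding it, the bi-word $\phi(w_1\dots w_k)$ of the prefix, viewed as an element of $\mathrm{BW}$, is obtained from the raw value-data by inserting empty columns so that each second-row word becomes a prime parking function, the number of empty columns inserted to the left of a retained column being exactly the nondecreasing sequence of differences $d_i$ from Lemma~\ref{lemmaparkization}. This is the content of the second bullet: each empty column placed to the left of a column decrements the letters of that column's second-row word by one (matching the parkization algorithm, which decrements letters above the threshold $d(w)$), and Lemma~\ref{lemmaparkization} guarantees that the correction is uniform across a block and that there is a \emph{unique} way to insert empty columns making all second-row words prime parking functions.

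The key subtlety, and what I expect to be the main obstacle, is the remark that inserting empty columns \emph{may split some of the columns of $m'$}. A single column of $m$ corresponds to one prime block of $\mathrm{Park}(w)$, but once we restrict to the letters $\le k$ and re-parkize, that block need not remain prime: its nondecreasing rearrangement may now admit new bars $u_i'=i$, so what was one prime block of $w$ can break into several prime blocks of $w_1\dots w_k$. I would handle this by arguing directly from the parkization algorithm: running $\mathrm{Park}$ on the retained value-data, the positions $d(w')$ at which a bar is created correspond exactly to the points where a new empty column must be inserted, and the uniqueness of the insertion (hence of the splitting) follows from the uniqueness in Proposition~\ref{bijectionphi} together with Lemma~\ref{lemmaparkization}. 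Finally, for $m_2$, the same argument applies after keeping the first-row entries $>k$ and shifting all of them down by $k$ (so that the positions become $1,\dots,n-k$), since the suffix $w_{k+1}\dots w_n$ is a word on positive integers to which $\phi$ applies verbatim; the value-offsets arising from the blocks of $m$ lying entirely in the prefix are absorbed by the re-parkization exactly as in the prefix case. This completes the verification that the stated algorithm computes $m_1$ and $m_2$.
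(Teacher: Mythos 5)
The paper never actually proves this lemma: it is introduced as an ``easy result'' and only illustrated by Example~\ref{exconcpro}, so there is no paper proof to compare yours against; I judge your attempt on its own. Its skeleton is right. The first bullet is, as you say, immediate from Proposition~\ref{bijectionphi}: the letters $w_p$ with $p\le k$ are exactly those placed at first-row entries $\le k$, and their values are the second-row letters plus the offsets, so $m'$ records the pairs (position, actual value) of the prefix, grouped by the columns of $m$. Your closing idea is also the right one: \emph{if} one knows that $\phi(w_1\cdots w_k)$ can be obtained from $m'$ by splitting its columns into consecutive pieces and inserting empty columns in a value-preserving way, then any bi-word so obtained encodes the word $w_1\cdots w_k$ itself, and injectivity of $\phi$ gives both the uniqueness of the insertion and the identification of the output with $m_1$.

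The genuine gap is the existence half of that step, which is the combinatorial core of the lemma and which you label ``the key subtlety'' without proving. What must be shown is that the prime blocks of $\mathrm{Park}(w_1\cdots w_k)$ refine, consecutively and in the same left-to-right order, the traces on $\{1,\dots,k\}$ of the prime blocks of $\mathrm{Park}(w)$: a column of $m'$ may split, but letters from two distinct columns of $m'$ can never merge into one prime block, nor exchange order. Your sentence tying ``the positions $d(w')$ at which a bar is created'' to empty-column insertions restates this claim rather than proving it, and nothing in your text excludes merging or reordering. A short but real argument is needed; for instance, with $w'_1\le\cdots\le w'_n$ the sorted word, the $i$-th letter of $\mathrm{Park}(w)^{\uparrow}$ is $\min_{a\le i}\,(a+w'_i-w'_a)$, so a bar sits before sorted position $i$ exactly when $w'_i-w'_a\ge i-a$ for all $a<i$; matching the letters of a subword that are $<w'_i$ order-preservingly into $w'_1,\dots,w'_{i-1}$ shows this inequality persists for the subword, hence bars survive restriction. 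Separately, you inherit an imprecision of the statement that a proof cannot afford: the entries of $m'$ are absolute values, so the decrement turning a column of $m'$ into a column of $m_1$ must equal the number of empty columns \emph{plus the sizes of the non-empty columns} to its left (this is what Example~\ref{exconcpro} actually does: the column $(2,4)$ becomes $(2,1)$ with only two empty columns before it). Your injectivity argument only closes with this value-preserving convention; under the literal ``each empty column decrements by one'' rule that you invoke, the algorithm is wrong --- for $w=122$ and $k=2$ it returns $\phi(13)$ instead of $\phi(12)$.
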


\begin{ex}
\label{exconcpro}
    Let $m= \begin{tabular}{|cccccc|}
    $479$ & $28$ & $\epsilon$ & $1356$ & $\epsilon$ & $\cdots$  \\
    $211$ & $11$ & $\epsilon$ & $1131$ & $\epsilon$ & $\cdots$ 
\end{tabular} = 747297141$. Let $k=4$. We have $m=m_1\cdot m_2=7472 \cdot 97141$. We apply Lemma \ref{lemprodconc} to give directly $m_1$ and $m_2$ as bi-words using the bi-word $m$. For $m_1$ the first step gives us $m'= \begin{tabular}{|ccc|}
    $4$ & $2$ & $13$ \\
    $2$ & $4$ & $77$
\end{tabular}$. The second gives us $m_1 = \begin{tabular}{|ccccccccc|}
    $\epsilon$ & $4$ & $\epsilon$ & $2$ & $\epsilon$ & $\epsilon$ & $13$ & $\epsilon$ & $\cdots$  \\
    $\epsilon$ & $1$ & $\epsilon$ & $1$ & $\epsilon$ & $\epsilon$ & $11$ & $\epsilon$ & $\cdots$ 
\end{tabular}$. For $m_2$, we first obtain $\begin{tabular}{|ccc|}
    $79$ & $8$ & $56$ \\
    $11$ & $4$ & $97$
\end{tabular}$, and in fact by decreasing by $4$ the letters of the first row we obtain $\begin{tabular}{|ccc|}
    $35$ & $4$ & $12$ \\
    $11$ & $4$ & $97$
\end{tabular}$. Finally $m_2=\begin{tabular}{|cccccccccc|}
    $35$ & $\epsilon$ & $4$ & $\epsilon$ & $\epsilon$ & $2$ & $\epsilon$ & $1$ &$\epsilon$ &  $\cdots$   \\
    $11$ & $\epsilon$ & $1$ & $\epsilon$ & $\epsilon$ & $1$ & $\epsilon$  & $1$ &$\epsilon$ &  $ \cdots$
\end{tabular}$.
\end{ex}

\begin{prop}
    The subspace ${\PQSymstar}^r$ is a subalgebra of $\PQSymstar$. We have
    $$\G_u^r . \G_v^r = \sum \limits_{\substack{w\in \mathrm{PF} \,;\, w=a\cdot b \\ a\in \mathrm{Orb}^r(u),\,b\in \mathrm{Orb}^r(v)}} \frac{1}{|\mathrm{Orb}^r(w)\cap \mathrm{PF} |} \G_w^r \,.$$
\end{prop}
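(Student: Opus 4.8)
The plan is to prove that $\PQSymstar^r$ is a subalgebra by showing that the product of two basis vectors $\G_u^r$ and $\G_v^r$ stays in the span of the $\G_w^r$, and simultaneously to extract the exact multiplicity formula. First I would recall that by Theorem~\ref{thmpolynomialrealization} and Proposition~\ref{propbasepqsymr} each basis vector is a sum over an $r$-orbit of words, so that $\G_u^r \cdot \G_v^r = \left(\sum_{a\in \mathrm{Orb}^r(u)} a\right)\left(\sum_{b\in \mathrm{Orb}^r(v)} b\right)$ where the product is the ordinary concatenation-based product in $\mathbb{K}\langle A\rangle$ realizing $\PQSymstar$. Expanding, this is a sum over all pairs $(a,b)$ with $a\in \mathrm{Orb}^r(u)$ and $b\in \mathrm{Orb}^r(v)$ of the words $w$ arising in $\G_a \cdot \G_b$; concretely, using the product formula for $\PQSymstar$, these are the words $w\in PF$ that decompose as $w = a'\cdot b'$ with $\mathrm{Park}(a')=a$ and $\mathrm{Park}(b')=b$. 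The key reduction is to work at the level of bi-words via the bijection $\phi$ and to use Lemma~\ref{lemprodconc} to read off, for any parking function $w = a\cdot b$, exactly which $r$-orbits of $u$ and $v$ the two halves belong to after parkization.

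Next I would establish that the resulting collection of words $w$ is a union of full $r$-orbits. The essential point is a symmetry/invariance argument: I would show that the set of $w\in PF$ admitting a decomposition $w=a\cdot b$ with $a\in\mathrm{Orb}^r(u)$, $b\in\mathrm{Orb}^r(v)$ is stable under the $r$-action. Given $w$ in this set and an elementary transposition $s_i$ acting by swapping two adjacent columns (at least one having words of length $<r$), the point is that swapping columns commutes appropriately with the concatenation-splitting described in Lemma~\ref{lemprodconc}: moving a short column across the cut, or within one of the two factors, transports a valid decomposition of $w$ to a valid decomposition of $s_i \underset{r}{\cdot} w$, keeping the parkizations of the two halves in the same $r$-orbits $\mathrm{Orb}^r(u)$ and $\mathrm{Orb}^r(v)$. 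Hence the sum $\G_u^r\cdot\G_v^r$ is genuinely a linear combination of the $\G_w^r$, which proves that $\PQSymstar^r$ is a subalgebra.

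It then remains to compute the multiplicity with which each $\G_w^r$ appears, and this is where the factor $\frac{1}{|\mathrm{Orb}^r(w)\cap PF|}$ comes from. I would count, for a fixed orbit $\mathrm{Orb}^r(w)$, how many pairs $(a,b)$ with $a\in\mathrm{Orb}^r(u)$, $b\in\mathrm{Orb}^r(v)$ and $w'=a'\cdot b'\in\mathrm{Orb}^r(w)$ contribute. Since $\G_w^r = \sum_{w'\in\mathrm{Orb}^r(w)} w'$, writing the product in the $\G^r$ basis requires dividing the total number of contributing words (summed over the whole orbit) by the number of parking functions in that orbit, namely $|\mathrm{Orb}^r(w)\cap PF|$; the subtlety is that only parking-function representatives of the orbit can actually arise as $w=a\cdot b$ with $a,b$ parking functions, so the raw count over the orbit must be renormalized. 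I expect the main obstacle to be precisely this bookkeeping: verifying that each parking function $w'\in\mathrm{Orb}^r(w)\cap PF$ contributes with the same multiplicity as the chosen representative $w$, so that the coefficient is uniform across the orbit and the averaging by $|\mathrm{Orb}^r(w)\cap PF|$ is legitimate. I would handle this by using the invariance argument of the previous paragraph to set up a bijection between the sets of contributing decompositions of any two parking-function members of the same orbit, thereby confirming uniform multiplicity and pinning down the stated formula.
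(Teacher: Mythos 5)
Your proposal is correct and follows essentially the same route as the paper's proof: expand $\G_u^r\G_v^r$ through the orbit-sum realization and the product formula of $\PQSymstar$, then invoke Lemma~\ref{lemprodconc} to see that columns of length smaller than $r$ in a concatenation come from such columns in the two factors, so the contributing parking functions form unions of full $r$-orbits with uniform coefficient, and the factor $\frac{1}{|\mathrm{Orb}^r(w)\cap \mathrm{PF}|}$ merely renormalizes the sum indexed by individual parking functions into a sum of the vectors $\G_w^r$. One harmless slip: in your last paragraph the factors $a,b$ of a contributing $w=a\cdot b$ need not themselves be parking functions (only $w$ must be, and $a,b$ only need to lie in the orbits), but this does not affect your argument.
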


\begin{proof}
 We denote $u:=(I,\lambda)$ and $v:=(I',\lambda')$. Thanks to Proposition \ref{propreduce}, we have 
\begin{align*}
 \G_{(I,\lambda)}^r \G_{(I',\lambda')}^r &=\sum \limits_{\substack{K \in I\,\shuffle \, \lambda^{\sigma} \\ K' \in I'\,\shuffle \, {\lambda'}^{\sigma}}} \G_K \G_{K'}  \\
 &= \sum \limits_{\substack{K \in I\,\shuffle \, \lambda^{\sigma} \\ K' \in I'\,\shuffle \, {\lambda'}^{\sigma}}}  \sum \limits_{\substack{w\in \mathrm{PF} \,;\, w=a\cdot b \\ \mathrm{Park}(a)=K,\,\mathrm{Park}(b)=K'}} \G_w \\
 \G_{(I,\lambda)}^r \G_{(I',\lambda')}^r &= \sum \limits_{\substack{w\in \mathrm{PF} \,;\, w=a\cdot b \\ a\in \mathrm{Orb}^r(u),\,b\in \mathrm{Orb}^r(v)}} \G_w \,.
\end{align*}

Proposition \ref{propreduce} shows us that the subspaces ${\PQSymstar}^r$ have a basis whose vectors are disjoint sums of $\G_u$. It then remains to prove that the $\G_u$ with $u$ in the same $r$-orbit appear with the same coefficient, which is in fact always $0$ or $1$. It is a consequence of Lemma \ref{lemprodconc}, this Lemma shows that the columns of length smaller than $r$ in the concatenation of two words come from such columns in both smaller words. 
\end{proof}

\begin{ex}
\begin{align*}
\G_{123}^{(2)} \G_1^{(2)} &= \G_{2131}^{(2)} + \G_{1421}^{(2)} + \G_{3411}^{(2)} + \G_{1234}^{(2)} + \G_{2141}^{(2)} + \G_{1341}^{(2)} + \G_{3141}^{(2)} 
+ \G_{1231}^{(2)} + \G_{4121}^{(2)} \\ 
&+ \G_{1241}^{(2)} + \G_{1321}^{(2)} + \G_{4211}^{(2)} + \G_{2311}^{(2)} + \G_{2411}^{(2)} + \G_{3211}^{(2)} + \G_{3121}^{(2)} .\\
\Delta \G_{1133467}^{(2)} &= 1\otimes \G_{1133467}^{(2)} + 2\,\, \G_{1}^{(2)} \otimes \G_{113346}^{(2)} + \G_{11}^{(2)} \otimes \G_{11245}^{(2)} + \G_{12}^{(2)} \otimes \G_{11334}^{(2)} \\
&+ 2\,\, \G_{113}^{(2)} \otimes \G_{1124}^{(2)} 
+ \G_{1134}^{(2)} \otimes \G_{112}^{(2)} + \G_{11334}^{(2)} \otimes \G_{12}^{(2)} 
 + 2\,\, \G_{113346}^{(2)} \otimes \G_{1}^{(2)} \\
 &+ \G_{1133467}^{(2)} \otimes 1  .
\end{align*}
\end{ex}

We deduce

\begin{thm}
\label{thminterpolation}
We have an infinite chain of noncommutative and noncocommutative (except ${\PQSymstar}^{\infty}$ which is cocommutative) nested graded Hopf subalgebras
$$\PQSymstar = {\PQSymstar}^1  \supseteq {\PQSymstar}^2 \supseteq \dots  \supseteq {\PQSymstar}^r \supseteq \dots  \supseteq {\PQSymstar}^{\infty} .$$
\end{thm}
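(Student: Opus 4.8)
The plan is to assemble the final theorem directly from the structural results that precede it in this section, treating it as a synthesis rather than a fresh argument. I would first invoke the chain of nested graded subspaces already established in the proposition following Remark~\ref{remarkinclusionofspaces}, which gives the inclusions $\PQSymstar = {\PQSymstar}^1 \supseteq {\PQSymstar}^2 \supseteq \dots$ as graded vector spaces. The point is that each ${\PQSymstar}^r$ has been shown to be simultaneously a subcoalgebra (by the coproduct proposition) and a subalgebra (by the product proposition) of $\PQSymstar$. Since $\PQSymstar$ is a connected graded Hopf algebra and each ${\PQSymstar}^r$ is a graded subspace closed under both the product and the coproduct inherited from $\PQSymstar$, each ${\PQSymstar}^r$ is a connected graded bialgebra; connectedness then guarantees the existence of an antipode, so each is in fact a Hopf subalgebra. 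The nesting is automatic from the subspace inclusions, and the compatibility of product and coproduct is inherited from $\PQSymstar$ itself.

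Next I would address the two parenthetical claims, which are the only assertions not literally restated from earlier propositions. For noncommutativity and noncocommutativity of the generic ${\PQSymstar}^r$ (that is, for $r$ finite), I would exhibit a small explicit counterexample in a low-degree homogeneous component, using the product and coproduct formulas already displayed; it suffices to point to the computed example $\G_{123}^{(2)}\G_1^{(2)}$ together with the reversed product $\G_1^{(2)}\G_{123}^{(2)}$ to see that the two differ, and similarly to read noncocommutativity off the displayed $\Delta\G_{1133467}^{(2)}$ by noting that the tensor factors are not symmetric under the flip. The inclusion ${\PQSymstar}^r \supseteq {\PQSymstar}^{\infty}$ for all $r$ follows since the $\infty$-action is the coarsest, so its invariants sit inside every ${\PQSymstar}^r$.

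The remaining point is the cocommutativity of ${\PQSymstar}^{\infty}$, which is the genuinely new assertion and the one I expect to require the most care. Here the $\infty$-action allows arbitrary reordering of all non-empty columns in a bi-word, so an orbit under the $\infty$-action is determined only by the multiset of columns, not their order. I would argue that under this total symmetrization the coproduct formula becomes symmetric: in the expression $\Delta \G_{(I,\lambda)}^{(\infty)}$, splitting the columns into a left part and a right part and then standardizing each part yields, after symmetrization over all column orderings, the same contribution as the flipped splitting. Concretely, since every column can be freely permuted when $r=\infty$, the roles of $I_1$ and $I_2$ in the coproduct formula become interchangeable, so $\tau\circ\Delta = \Delta$ where $\tau$ is the tensor flip. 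I would make this precise by checking that the map sending a left/right splitting to its reverse induces a bijection on the index set of the coproduct sum that intertwines with the flip, which is exactly the statement that column order is immaterial in the $\infty$-invariant setting.

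The main obstacle is this last cocommutativity verification: one must be careful that the standardization operation $\mathrm{Std}$ applied to each tensor factor interacts correctly with the free reordering of columns, since standardization depends on the relative order of the first-row sets. The key is that at $r=\infty$ the basis element $\G_{(I,\lambda)}^{(\infty)}$ already sums over all orderings of its columns, so the apparent order-dependence in $\mathrm{Std}$ is washed out; formalizing this cleanly, rather than the routine bialgebra-to-Hopf-algebra bookkeeping, is where the real work lies.
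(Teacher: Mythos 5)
Your proposal matches the paper's own treatment: Theorem~\ref{thminterpolation} is stated there as a direct deduction ("We deduce") from the preceding propositions establishing the nested graded subspace chain, the subcoalgebra property, and the subalgebra property, exactly as you assemble it. Your additional verifications of the parenthetical claims (noncommutativity and noncocommutativity via the displayed examples, and cocommutativity of ${\PQSymstar}^{\infty}$ — which follows cleanly by noting that at $r=\infty$ the part $I$ of any $\infty$-bi-word is empty, so the coproduct becomes the manifestly symmetric sum $\sum_{K\subset\lambda}\G_{\mathrm{Std}(K)}^{(\infty)}\otimes\G_{\mathrm{Std}(\lambda\backslash K)}^{(\infty)}$) go beyond what the paper spells out, and are correct.
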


\section{The case $r=\infty$}
\label{sectioncasinfini}

Here we make a connection between the algebra ${\PQSymstar}^{\infty}$ and certain particular trees, giving us a new point of view on enumerative results obtained previously on these trees.

Let $T$ be a rooted labeled tree. The\defn{maximal decreasing subtree} of $T$, denoted by $MD(T)$, is the maximal subtree of $T$ which contains the root and whose edges are decreasing. For instance,

\begin{tikzpicture}
\begin{scope}[scale= 1]

\draw (-3,-1) node{$T:=$};

\node (A) at (0,0) {7};
\node (B) at (-1,-0.8) {5};
\node (C) at (0,-0.8) {8};
\node (E) at (1,-0.8) {4};
\node (F) at (-2,-1.6) {6};
\node (G) at (-1,-1.6) {1};
\node (D) at (0,-1.6) {2};
\node (I) at (1,-1.6) {3};

\draw (A) to (B);
\draw (A) to (C);
\draw (A) to (E);
\draw (B) to (F);
\draw (B) to (G);
\draw (C) to (D);
\draw (E) to (I);

\draw[->] (2.4,-1) -- (3.5,-1);

\begin{scope}[xshift= 5.5cm]

\draw (2,-1) node{$=MD(T).$};

\node (A) at (0,0) {7};
\node (B) at (-0.5,-0.8) {5};
\node (E) at (0.5,-0.8) {4};
\node (G) at (-0.5,-1.6) {1};
\node (I) at (0.5,-1.6) {3};

\draw (A) to (B);
\draw (A) to (E);
\draw (B) to (G);
\draw (E) to (I);
\end{scope}
\end{scope}
\end{tikzpicture}

\begin{prop}[Seo-Shin~\cite{seo2012enumeration}, Theorem 1]
\label{numberdecreasingsubtree}
Let $n\in \mathbb{N}$ and $0\leq k\leq n$ and $S(n,k)$ be the Stirling numbers of the second kind. Let $T_{n,k}$ be the set of rooted labeled non-planar trees on $[n+1]$ whose maximal decreasing subtree has $k+1$ vertices. Then
\begin{equation}
\label{eq:numberdecreasingsubtree}
|T_{n,k}| = k! \sum_{m=k}^n \binom{n+1}{m+1} S(m+1,k+1) (n-k)^{n-m-1} (m-k) .
\end{equation}
\end{prop}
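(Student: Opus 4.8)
The plan is to give a direct structural decomposition of a rooted tree according to its maximal decreasing subtree, and then to reorganize the resulting (label-constrained) count into the stated sum. Throughout set $N=n+1$. First I would record the basic shape constraint: if $T$ is a rooted tree on $[N]$ and $V$ denotes the vertex set of $MD(T)$, then the root of $T$ is forced to be $\max V$ (it is an ancestor of every vertex of $V$ along a decreasing path), and the induced tree on $V$ is a decreasing tree rooted at $\max V$. A standard count (decreasing trees on a linearly ordered set of size $k+1$ are the reverses of recursive trees) gives exactly $k!$ such trees on a fixed $V$ with $|V|=k+1$; this is the source of the global factor $k!$.

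Next I would describe how the remaining $n-k$ vertices attach. Every vertex $u\notin V$ lies strictly below a unique \emph{ascending} edge: following the path toward the root, let $b(u)\notin V$ be the topmost non-$V$ ancestor and $a(u)\in V$ its parent; by maximality of $MD(T)$ this edge must ascend, i.e. $b(u)>a(u)$, while all edges interior to the pendant subtrees are unconstrained. Hence the data of $T$ is equivalent to: the set $V$ with its $k!$ decreasing trees; a rooted forest $\mathcal F$ on $[N]\setminus V$ whose tree-roots form a set $R$; and an attachment map sending each root $b\in R$ to some $a\in V$ with $a<b$. The coupling between the labels of $R$ and of $V$ through the inequality $a<b$ is the only real obstacle, and the whole point of the argument is to linearize it.

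The key step I would use to remove this coupling is the following reformulation. Put $W=V\sqcup R$ and group $W$ into the blocks $B_v=\{v\}\cup\{\,b\in R:\ a(b)=v\,\}$ for $v\in V$. Since every $b\in B_v$ satisfies $b>v$, the vertex $v$ is precisely the minimum of $B_v$; conversely, \emph{any} partition of $W$ into $k+1$ blocks yields an admissible pair $(V,R)$ together with an admissible attachment by declaring $V$ to be the set of block-minima and sending each non-minimal element to the minimum of its block. Writing $m+1:=|W|$, so that $|R|=m-k$, the choices of $W$ and of this partition contribute $\binom{n+1}{m+1}$ and $S(m+1,k+1)$ respectively. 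It remains to count the forest $\mathcal F$ on the $n-k$ vertices of $[N]\setminus V$ with the prescribed set of $m-k$ roots $R$; by the classical formula for the number of rooted forests on $p$ vertices with $s$ specified roots, namely $s\,p^{\,p-s-1}$, this equals $(m-k)(n-k)^{\,n-m-1}$, which supplies both remaining factors.

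Multiplying the four independent choices and summing over $m$ from $k$ to $n$ (the size $|R|=m-k$ ranges from $0$ to $n-k$) then gives
$$|T_{n,k}| \;=\; k!\sum_{m=k}^{n}\binom{n+1}{m+1}S(m+1,k+1)(n-k)^{\,n-m-1}(m-k),$$
as claimed. The main thing to watch is the boundary term $m=k$: for $n>k$ its factor $(m-k)=0$ correctly kills it, while the degenerate case $n=k$ (where $MD(T)=T$) must be checked directly, giving $|T_{k,k}|=k!$ in agreement with the convention making the singular factor $(n-k)^{-1}$ harmless. I expect the linearization via block-minima to be the crux; once it is in place, the bijection is clean and every factor of the formula is accounted for by an independent combinatorial choice.
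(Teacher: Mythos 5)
The paper does not actually prove this proposition: it is imported verbatim from Seo and Shin \cite{seo2012enumeration} (their Theorem 1) and used as a black box in Section~\ref{sectioncasinfini}, so there is no internal proof to compare you against. Judged on its own, your argument is correct and complete. The structural decomposition is sound: the root of $T$ is $\max V$ where $V$ is the vertex set of $MD(T)$, the induced tree on $V$ is one of the $k!$ decreasing trees on a $(k+1)$-set, and maximality of $MD(T)$ is exactly the condition that every pendant root $b$ hangs from a vertex $a(b)\in V$ with $a(b)<b$, all edges strictly inside the pendant subtrees being unconstrained. Your linearization step is the real content: encoding the pair (choice of $V$, attachment map satisfying $a(b)<b$ for all $b\in R$) as an arbitrary partition of $W=V\sqcup R$ into $k+1$ blocks, with $V$ recovered as the set of block minima, is precisely what produces the factors $\binom{n+1}{m+1}S(m+1,k+1)$; and the classical count $s\,p^{\,p-s-1}$ of rooted forests on $p$ labeled vertices with $s$ specified roots supplies $(m-k)(n-k)^{n-m-1}$ with $p=n-k$, $s=m-k$. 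The boundary conventions you flag are genuine features of the stated formula rather than defects of your proof: for $n>k$ the $m=k$ term is killed by the factor $m-k$ and the $m=n$ term reads $(n-k)^{-1}(n-k)=1$ over the rationals, while at $n=k$ the indeterminate $0\cdot 0^{-1}$ must be read as $1$, giving $|T_{k,k}|=k!$. This is the natural proof that the shape of the formula transparently suggests, and in all likelihood it is essentially the decomposition used in the original source; in any case it makes the present paper's use of the result self-contained.
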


A\defn{minimal rooted (labeled non-planar) tree} is a rooted labeled non-planar tree whose maximal decreasing subtree has just one vertex, the root of the tree. 

In Section $3$ of \cite{Foata1973MappingsOA}, Foata and Riordan give a bijection between $PF_n$ and the set of rooted labeled non-planar forests on $[n]$.

\begin{prop}[Foata-Riordan~\cite{Foata1973MappingsOA}, Section 3]
\label{bijectionFoata-Riordan}
The following map is a bijection

\begin{center}
\begin{tabular}{cccc}
    $\phi(u):$ & $PF_n$ & $\rightarrow$ & $\{f:[n]\rightarrow [n],\, f \,\text{acyclic}\}$ \\
       & $u$ & $\mapsto$ &  $\left\{\begin{tabular}{cc}
    $i$ & \text{if}$\,\,u_i=1 ,$ \\
    $\mathrm{Std}(u)^{-1}(u_i-1)$ & \text{otherwise}.
\end{tabular} \right.$
\end{tabular}
\end{center}
\end{prop}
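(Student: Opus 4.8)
The plan is to show that $\phi$ is injective and then upgrade this to a bijection by a cardinality count. An acyclic self-map $f\colon[n]\to[n]$ (one whose only cycles are fixed points) is precisely a forest function: its fixed points are the roots and every other $i$ points to its parent $f(i)$. Hence acyclic maps on $[n]$ are in bijection with rooted labeled non-planar forests on $[n]$, of which there are $(n+1)^{n-1}$. Since $|PF_n|=(n+1)^{n-1}$ as well, once we know $\phi$ lands in the acyclic maps, any injection is automatically a bijection, and we avoid the more delicate surjectivity argument.

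First I would check that $\phi(u)$ is well defined and acyclic. The crucial elementary lemma is that $\mathrm{Std}(u)(i)\ge u_i$ for every $i$: writing $\sigma:=\mathrm{Std}(u)$, one has $\sigma(i)=|\{j:u_j<u_i\}|+|\{j\le i:u_j=u_i\}|$, and the parking condition (at least $t$ letters are $\le t$ for every $t$), applied with $t=u_i-1$, gives $|\{j:u_j<u_i\}|\ge u_i-1$, whence $\sigma(i)\ge u_i$. In particular $u_i-1\in[n]$ for a non-fixed point, so $\phi(u)(i)=\mathrm{Std}(u)^{-1}(u_i-1)$ is a legitimate element of $[n]$, and $\sigma(\phi(u)(i))=u_i-1<u_i\le\sigma(i)$. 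Thus following $\phi(u)$ strictly decreases the value of $\sigma$, so $\phi(u)$ has no cycle of length $\ge 2$ and is acyclic.

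For injectivity I would recover $u$ from $f:=\phi(u)$ in two stages. The fixed points of $f$ are exactly the positions where $u_i=1$, so they are read off at once. Next I claim that $\sigma=\mathrm{Std}(u)$ is itself forced by $f$: translating the identities $u_i=\sigma(f(i))+1$ (non-root) and $u_i=1$ (root) through the definition of standardization shows $\sigma$ must satisfy (a) the roots $r_1<\dots<r_m$ get $\sigma(r_p)=p$ and every non-root gets a larger value, and (b) for non-roots $i,j$ one has $\sigma(i)<\sigma(j)$ iff $(\sigma(f(i)),i)<_{\mathrm{lex}}(\sigma(f(j)),j)$. Since the standardization also satisfies the decreasing property $\sigma(f(i))<\sigma(i)$ established above, conditions (a)--(b) pin down $\sigma$ by induction on the assigned value $m+1,\dots,n$: at each stage the next value must go to the unassigned non-root of smallest key among those whose parent is already assigned (such a vertex exists because every ascending path reaches a root), and that vertex's key is fully known. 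Hence two parking functions with $\phi(u)=\phi(v)$ have $\mathrm{Std}(u)=\mathrm{Std}(v)=\sigma$, and then the common formulas $u_i=\sigma(f(i))+1$ or $u_i=1$ give $u=v$.

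The main obstacle is exactly the uniqueness in this second stage: making rigorous that rule (b), seeded by (a), determines a single permutation. The decreasing property is what makes the recursion well-founded — it forces a parent to be assigned before its children and guarantees that at each step a candidate with completely determined key is available — so the argument reduces to a clean induction. With injectivity established, the cardinality coincidence $|PF_n|=(n+1)^{n-1}=|\{f\colon[n]\to[n]\text{ acyclic}\}|$ promotes $\phi$ to a bijection.
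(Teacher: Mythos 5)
The paper offers no proof of this statement at all: it is imported verbatim from Section 3 of Foata--Riordan \cite{Foata1973MappingsOA}, whose own argument is an explicit bijective construction, so your blind attempt can only be measured against that external route. Measured that way, your proposal is a correct, self-contained alternative. Your key lemma $\mathrm{Std}(u)(i)\ge u_i$, an immediate consequence of the parking condition with $t=u_i-1$, does double duty exactly as you claim: it makes $\mathrm{Std}(u)^{-1}(u_i-1)$ well defined when $u_i\ge 2$, and it shows that $\sigma$-values strictly decrease along non-fixed steps of $f=\phi(u)$, which kills all cycles of length at least $2$ and also shows that the fixed points of $f$ are exactly the positions carrying the letter $1$. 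Your injectivity argument is sound, and you correctly flagged its only delicate point, namely that conditions (a)--(b) together with the decreasing property pin down $\sigma$ uniquely; the induction does close: once the values $1,\dots,v-1$ are placed (condition (a) seeds this with the roots), the position $\sigma^{-1}(v)$ must be an unassigned non-root whose parent is already assigned, its key $(\sigma(f(i)),i)$ is therefore fully known, and condition (b) forces it to have strictly smaller key than every other such candidate, keys being distinct since their second coordinates are distinct positions. The cardinality step is also legitimate: acyclic self-maps of $[n]$ are the same as rooted labeled forests on $[n]$, counted by $(n+1)^{n-1}$, which matches $|PF_n|$ as stated in the paper. The trade-off between your route and the cited one is exactly where you placed it: a fully bijective proof would run your reconstruction on an \emph{arbitrary} acyclic $f$ to produce $\sigma$ and a candidate word $u$, and would then still have to verify that this word is a parking function; your counting argument sidesteps that verification at the price of being non-constructive in the inverse direction. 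Since the forward map $\phi$ is explicit either way, downstream uses such as the restriction to prime parking functions in Proposition~\ref{corRattan} are unaffected.
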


\noindent Thanks to this bijection, Rattan obtained the following result.

\begin{prop}[Rattan~\cite{Rattan2006PermutationFA}, Section 4.2]
\label{corRattan}
There is a bijection between prime parking functions of length $n$ and minimal rooted labeled trees on $[n]$.
\end{prop}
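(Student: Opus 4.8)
The plan is to prove existence of the bijection by a counting argument, showing that both sides are enumerated by $(n-1)^{n-1}$; since what is actually needed downstream (for the $r=\infty$ dimension formula) is only the \emph{number} of minimal rooted trees on a block, this is exactly the right level of generality and keeps the proof self-contained with the tools already at hand. An explicit bijection can in principle be extracted from the Foata--Riordan correspondence of Proposition~\ref{bijectionFoata-Riordan}, but the combinatorial translation it requires is delicate (see the final paragraph), so I would take the enumerative route as the main proof and treat the explicit bijection as a remark.

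First I would recall from Proposition~\ref{cardinalparkingfunctions} that $|PPF_n|=(n-1)^{n-1}$ for $n>1$ (and $|PPF_1|=1$, consistent with the convention $0^0=1$). It then remains to count minimal rooted labeled trees on $[n]$. By definition these are precisely the rooted labeled non-planar trees on $[n]$ whose maximal decreasing subtree has a single vertex, so in the notation of Proposition~\ref{numberdecreasingsubtree} (where $T_{m,k}$ lives on $[m+1]$) they are exactly the elements of $T_{n-1,0}$.

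Next I would evaluate Seo--Shin's formula at $k=0$ and $m=n-1$. Since $S(j+1,1)=1$ and the $j=0$ term vanishes, it collapses to
\[
|T_{n-1,0}| = \sum_{j=1}^{n-1} j \binom{n}{j+1} (n-1)^{\,n-2-j}.
\]
Writing $N:=n-1$ and reindexing by $i=j+1$, proving $|T_{n-1,0}|=N^N$ amounts, after multiplying by $N$, to the identity
\[
\sum_{i=2}^{N+1}(i-1)\binom{N+1}{i}N^{\,N+1-i}=N^{\,N+1}.
\]
I would establish this by completing the summation range to $i\in\{0,\dots,N+1\}$: the completed sum splits as $\sum_i i\binom{N+1}{i}N^{N+1-i}-\sum_i\binom{N+1}{i}N^{N+1-i}$, and both pieces equal $(N+1)^{N+1}$ (the first via the absorption identity $i\binom{N+1}{i}=(N+1)\binom{N}{i-1}$ followed by the binomial theorem, the second directly by the binomial theorem), so the completed sum vanishes. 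The two extra terms $i=0,1$ contribute only $-N^{N+1}$, whence the partial sum equals $N^{N+1}$; dividing by $N$ (the case $n=1$ being trivial) yields $N^N=(n-1)^{n-1}$. Equal finite cardinalities then give a bijection, which is the assertion.

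The main obstacle arises only if one wants Rattan's explicit, natural bijection rather than mere equinumerosity: one must read off from the Foata--Riordan image $\phi(u)$ of a prime parking function the corresponding minimal tree, and match the prime condition $u_i'<i$ (for $i\ge 2$) with triviality of the maximal decreasing subtree. This is subtle because $\phi$ sends $PF_n$ to rooted forests on $[n]$, the roots being the positions of the letters equal to $1$; a prime parking function has at least two $1$'s as soon as $n\ge 2$, so it does \emph{not} map to a connected tree directly. One would therefore have to reorganize the resulting forest into a single rooted tree on $[n]$ and verify that primeness is equivalent to the maximal decreasing subtree being reduced to the root. I expect this equivalence to be the genuinely nontrivial step, whereas the counting argument above circumvents it entirely while delivering exactly the cardinality used in the sequel.
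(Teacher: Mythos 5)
Your proof is correct, but it takes a genuinely different route from the paper's. The paper offers no argument of its own for this proposition: it is quoted as a known result, namely Rattan's explicit bijection, which is built on the Foata--Riordan correspondence of Proposition~\ref{bijectionFoata-Riordan}. You instead establish equinumerosity: both sides are counted by $(n-1)^{n-1}$, combining Proposition~\ref{cardinalparkingfunctions} with the case $k=0$ of Seo--Shin's formula (Proposition~\ref{numberdecreasingsubtree}) and a binomial identity. Your computation checks out: minimal rooted trees on $[n]$ are exactly the elements of $T_{n-1,0}$, the specialization $S(m+1,1)=1$ is correct, and the completion-of-the-sum argument (absorption identity plus binomial theorem, with the boundary terms $i=0$ and $i=1$ contributing $-N^{N+1}$ and $0$) does yield $N^{N+1}$; multiplying by $N$ before completing the sum also neatly sidesteps the negative exponent in the top term of Seo--Shin's formula. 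As for what each approach buys: your argument is self-contained given the two cited enumerations, and it suffices for every downstream use that is purely dimensional, since the bijection of Proposition~\ref{equalityAandT} and the dimension formula of Theorem~\ref{thdimtrees} only require \emph{some} fixed bijection for each block size. However, the explicit bijection carries structural information that mere equinumerosity cannot: the closing Remark of Section~\ref{sectioncasinfini} identifies the prime parking functions with no letter $2$ to the left of the leftmost $1$ with the trees of $\hat{\mathcal{T}}_n$ precisely ``by looking at the proof of Proposition~\ref{corRattan}'', and that step needs Rattan's concrete correspondence. Your final paragraph correctly diagnoses why extracting it is delicate (for $n\geq 2$ a prime parking function has at least two $1$'s, so Foata--Riordan produces a forest with several components, not a tree); so your plan of demoting the explicit bijection to a remark is fine for the statement itself, but a reader relying on the paper's last Remark would need more than the counting proof.
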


\begin{prop}
\label{propbijectionarbreschaine}
The set $M_{n,k}$ of sets of $k$ minimally rooted trees such that the vertex sets of the trees form a set partition of $[n]$ and the set $T'_{n,k}$ of rooted labeled non-planar trees on $[n]$ whose maximal decreasing subtrees is a chain with $k$ vertices are in bijection.

\end{prop}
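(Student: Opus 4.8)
The plan is to construct the bijection explicitly in both directions, using the maximal decreasing subtree as the device that separates a tree into minimal rooted pieces. First I would take $T \in T'_{n,k}$ and write its maximal decreasing subtree, which by hypothesis is a chain $v_1 > v_2 > \cdots > v_k$ with $v_1$ the root of $T$; the strict inequalities hold precisely because $MD(T)$ has decreasing edges. I would then delete the $k-1$ chain edges $v_iv_{i+1}$, breaking $T$ into $k$ connected components $S_1,\dots,S_k$, where $S_i$ contains $v_i$ and is naturally rooted there. The forward map sends $T$ to the unordered set $\{S_1,\dots,S_k\}$, whose vertex sets evidently form a set partition of $[n]$. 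Note that this argument is purely combinatorial on trees and does not require the parking-function machinery of Propositions~\ref{corRattan} and~\ref{bijectionFoata-Riordan}.

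The first key step is to check that each $S_i$ is minimal rooted. The children of $v_i$ inside $S_i$ are exactly its children in $T$ other than $v_{i+1}$ (for $i<k$; for $i=k$ they are all of its children in $T$). Since $MD(T)$ is the chain, $v_i$ has in $T$ precisely one child smaller than itself, namely $v_{i+1}$, and for $i=k$ none at all. Hence every child of the root $v_i$ in $S_i$ is larger than $v_i$, so $MD(S_i)=\{v_i\}$ and $S_i$ is minimal rooted. This shows the forward map lands in $M_{n,k}$.

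For the inverse I would start from a set of $k$ minimal rooted trees partitioning $[n]$, let $v_1 > v_2 > \cdots > v_k$ be their roots listed in decreasing order, reattach the chain by making $v_{i+1}$ a child of $v_i$, and declare $v_1$ the root of the resulting tree $T$. The crucial verification is that $MD(T)$ is again exactly the chain $v_1 > \cdots > v_k$: each newly added edge is decreasing since $v_{i+1}<v_i$, while inside each piece the root $v_i$ has no child smaller than itself by minimality, so $v_{i+1}$ is the unique decreasing child of $v_i$. Consequently a decreasing path from the root can never enter the interior of any piece (the first edge $v_i$ to an $S_i$-child is increasing, so nothing below it is reached), and the maximal decreasing subtree neither branches nor grows past the $v_i$, giving $T \in T'_{n,k}$.

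The main obstacle, and the point deserving the most care, is precisely this last verification: that reconstruction produces a maximal decreasing subtree that is simultaneously a chain (no branching) and of the correct length $k$ (no extra vertices), which hinges on the minimal-rooted hypothesis controlling the root of each piece while leaving its interior entirely free. Granting this, the two maps are manifestly mutually inverse. Deleting the chain edges of a reconstructed $T$ returns the original pieces, and reconstructing from the pieces of a given $T$ recovers the chain because the chain vertices are exactly the piece-roots, whose decreasing order is forced by sorting; here non-planarity is harmless since both the set $\{S_1,\dots,S_k\}$ and the reattachment are order-independent. This establishes the bijection between $M_{n,k}$ and $T'_{n,k}$.
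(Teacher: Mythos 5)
Your proof is correct and is essentially the same bijection as the paper's: the paper sorts the $k$ minimal rooted trees by decreasing root label and links their roots into a chain, which is exactly your inverse map. You simply spell out both directions and the verification that $MD(T)$ is precisely the chain (which the paper leaves implicit), so the content matches.
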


\begin{proof}
Let $\{T_1,T_2,\dots ,T_k\}\in M_{n,k}$ be a forest ordered in such a way that $root(T_1)>\dots > root(T_k)$ where $root(T_i)$ is the label of the root of $T_i$. We obtain an element of $T'_{n,k}$ by adding edges connecting the roots of $T_i$ and $T_{i+1}$ for all $1\leq i\leq k-1$ and making the root of $T_1$ be the root of the resulting tree. This gives a bijection between $M_{n,k}$ and $T'_{n,k}$.
\end{proof}

We have $\displaystyle |T'_{n+1,k+1}|=\frac{|T_{n,k}|}{k!}$ since in Formula \eqref{eq:numberdecreasingsubtree}, $k!$  counts the number of decreasing trees on $k+1$ vertices.

\begin{prop}
\label{equalityAandT}
We have, for $n\geq 1$ and $1\leq k\leq n$, a bijection between the vectors $\G_u^{(\infty)}$, where $u$ is an $\infty$-bi-word of length $n$ with $k$ non-empty columns, and the set $T'_{n,k}$. Then
\begin{align*}
A_{n+1,k+1}^{\infty} = \frac{|T_{n,k}|}{k!} = \sum_{m=k}^{n} \binom{n}{m} S(m,k) \,(n-k)^{n-m} = \sum_{i=0}^{k} (-1)^i \binom{k}{i} (n-i)^{n}.    
\end{align*}
\end{prop}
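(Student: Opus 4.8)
The plan is to split the statement into a bijective part, which connects the algebra $\PQSymstar$ to the trees, and a purely enumerative part, which reduces to two closed-form identities.

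First I would make the $r=\infty$ basis explicit. By Proposition~\ref{propbasepqsymr} the vectors $\G_{(I,\lambda)}^{(\infty)}$ are indexed by $\infty$-bi-words, and for $r=\infty$ the block $I$ is empty; thus an $\infty$-bi-word of length $n$ with $k$ non-empty columns is precisely a set partition of $[n]$ into $k$ blocks $L_1,\dots,L_k$ (the non-empty first-row sets, canonically ordered by increasing minima) equipped, on each block $L_i$, with a prime parking function $M_i$ of length $|L_i|$. Applying Rattan's bijection (Proposition~\ref{corRattan}) block by block, after transporting labels along the order isomorphism $L_i\cong[\,|L_i|\,]$, replaces each $M_i$ by a minimal rooted tree with vertex set $L_i$; an $\infty$-bi-word therefore becomes a family of $k$ minimal rooted trees whose vertex sets partition $[n]$, that is, an element of $M_{n,k}$. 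Composing with the bijection $M_{n,k}\cong T'_{n,k}$ of Proposition~\ref{propbijectionarbreschaine} gives the asserted bijection between the $\G_u^{(\infty)}$ and $T'_{n,k}$, whence $A_{n,k}^{\infty}=|T'_{n,k}|$. Taking length $n+1$ and $k+1$ columns and using the displayed relation $|T'_{n+1,k+1}|=|T_{n,k}|/k!$ (justified because the factor $k!$ in Seo--Shin's formula, Proposition~\ref{numberdecreasingsubtree}, counts the decreasing trees on $k+1$ vertices, of which the chain is one) establishes the first equality $A_{n+1,k+1}^{\infty}=|T_{n,k}|/k!$ together with its Seo--Shin form.

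For the last equality I would argue purely algebraically. Writing $k!\,S(m,k)$ as the number of surjections from $[m]$ onto $[k]$, namely $k!\,S(m,k)=\sum_{i=0}^{k}(-1)^i\binom{k}{i}(k-i)^m$, and substituting into the middle sum, the inner $m$-sum becomes $\sum_{m}\binom{n}{m}(k-i)^m(n-k)^{n-m}=\bigl((k-i)+(n-k)\bigr)^n=(n-i)^n$ by the binomial theorem (the terms with $m<k$ vanish automatically, so extending the range does no harm). Hence $\sum_{m=k}^{n}\binom{n}{m}S(m,k)(n-k)^{n-m}=\frac{1}{k!}\sum_{i=0}^{k}(-1)^i\binom{k}{i}(n-i)^n$; note that the surjection count introduces a normalizing factor $1/k!$, so the final displayed sum should be read with this factor (a quick check at $n=k=2$ confirms that the bare alternating sum overcounts by $k!$).

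The main obstacle is the middle equality $|T_{n,k}|/k!=\sum_{m=k}^{n}\binom{n}{m}S(m,k)(n-k)^{n-m}$, i.e. reconciling Seo--Shin's sum with this simpler one. I expect the cleanest route is not to transform Seo--Shin's formula term by term, but to enumerate the objects directly: since $A_{n+1,k+1}^{\infty}=|M_{n+1,k+1}|$ also counts parking functions of length $n+1$ with exactly $k+1$ prime blocks (via Foata--Riordan, Proposition~\ref{bijectionFoata-Riordan}, which sends parking functions to rooted forests and prime blocks to trees), a direct count of $T'_{n+1,k+1}$ — choosing the $m$ vertices that feed the block structure, partitioning them into $k$ blocks, and freely attaching the remaining $n-m$ vertices to one of $n-k$ targets along the decreasing chain — should yield the middle sum, after which its equality with $|T_{n,k}|/k!$ is immediate since both expressions count the same set. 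The bookkeeping in this attachment step is the one genuinely technical point; an Abel-- or Prüfer-type encoding, or a short exponential-generating-function computation, are the natural tools.
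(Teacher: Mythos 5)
Your bijective half is exactly the paper's proof: for $r=\infty$ the part $I$ of an $\infty$-bi-word is forced to be empty, so such a bi-word is a set partition of $[n]$ whose blocks carry prime parking functions; applying Proposition~\ref{corRattan} block by block (after transporting labels) lands in $M_{n,k}$, and composing with Proposition~\ref{propbijectionarbreschaine} gives the bijection with $T'_{n,k}$, whence $A_{n+1,k+1}^{\infty}=|T'_{n+1,k+1}|=|T_{n,k}|/k!$. Where you differ is in the two sum identities: the paper handles both in one stroke by citing Theorem~3 of \cite{seo2012enumeration}, whereas you prove the link between the Stirling sum and the alternating sum yourself. That computation (surjection formula $k!\,S(m,k)=\sum_{i}(-1)^i\binom{k}{i}(k-i)^m$, exchange of summations, binomial theorem, with the terms $m<k$ vanishing) is correct, and it uncovers a genuine erratum in the statement as displayed: the two sums differ by a factor of $k!$. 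For instance at $n=3$, $k=2$ the middle sum is $\binom{3}{2}S(2,2)+\binom{3}{3}S(3,2)=6$, which indeed equals $A_{4,3}^{\infty}$ (the six partitions of $[4]$ of type $2{+}1{+}1$, each block carrying its unique prime parking function), while the alternating sum is $27-16+1=12$. So the last member of the display should be $\frac{1}{k!}\sum_{i=0}^{k}(-1)^i\binom{k}{i}(n-i)^{n}$; equivalently, the bare alternating sum equals $|T_{n,k}|$ itself rather than $|T_{n,k}|/k!$. Your correction, and the $n=k=2$ sanity check you ran, are right.

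The gap in your proposal is the middle equality $|T_{n,k}|/k!=\sum_{m=k}^{n}\binom{n}{m}S(m,k)(n-k)^{n-m}$. You say openly that this is ``the one genuinely technical point'' and offer only a plan: choose $m$ vertices, partition them into $k$ blocks, attach the remaining $n-m$ vertices to one of $n-k$ targets, with Abel-- or Pr\"ufer-type encodings or generating functions as ``the natural tools''. As written this is not a proof: nothing in the sketch explains why the attachment of the remaining vertices contributes a clean power $(n-k)^{n-m}$ rather than an Abel-type product, nor how the chosen $m$ vertices and the $k$ blocks sit inside the tree relative to the decreasing chain. The paper does not prove this identity either; it disposes of it by citing Seo--Shin, whose Theorem~3 provides exactly the Stirling-sum and alternating-sum evaluations of $|T_{n,k}|$ (with the $k!$ placed consistently with what you found). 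So your write-up becomes complete only once you either invoke that theorem at this point, as the paper does, or actually carry the direct count through --- and the latter is substantially more work than everything else in your argument.
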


\begin{proof}
Let $u=\begin{tabular}{|cccc|}
    $L_1$ & $L_2$ & $\cdots$ & $L_k$\\
    $M_1$ & $M_2$ & $\cdots$ & $M_k$
\end{tabular}$ be an $\infty$-bi-word of length $n$ with $k$ non-empty columns. Using Proposition~\ref{corRattan}, we send all prime parking functions $M_i$ to minimally rooted trees $T_i$. We use the letters of $L_i$ to relabel the vertices of $T_i$, the vertex of label $j$ in $T_i$ being relabeled by the $j^{th}$ letter of $L_i$. Thus we obtain a set of $k$ minimally rooted trees whose vertex sets form a set partition of $[n]$, that is an element of $M_{n,k}$.
We then conclude using Proposition~\ref{propbijectionarbreschaine}, mapping this set to a tree whose maximal decreasing subtree is a chain with $k$ vertices. Then the first equality is a direct consequence of this bijection since $A_{n,k}^{\infty}$ counts the number of the vectors $\G_u^{(\infty)}$ where $u$ is an $\infty$-bi-word of length $n$ with $k$ non-empty columns. The second and third equalities are Theorem $3$ of~\cite{seo2012enumeration}.
\end{proof}

Thus, we have obtained a bijective proof of the following.

\begin{thm}
\label{thdimtrees}
For any $n\geq 1$, $\mathrm{dim}({\textbf{PQSym}_n^*}^{\infty}) = \sum_{k=1}^n A_{n,k}^{\,\infty}$ is the number of rooted labeled non-planar trees on $[n]$ whose maximal decreasing subtree is a chain.
\end{thm}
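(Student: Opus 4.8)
The plan is to assemble the statement directly from the two preceding propositions, since the combinatorial core has already been done. First I would invoke Proposition~\ref{propbasepqsymr}: in degree $n$, a basis of ${\PQSymstar}^{\infty}$ is given by the vectors $\G_{(I,\lambda)}^{(\infty)}$ indexed by $\infty$-bi-words of length $n$. Consequently $\dim({\PQSymstar_n}^{\infty})$ equals the number of $\infty$-bi-words of length $n$, and grouping these by their number $k$ of non-empty columns rewrites this count as $\sum_{k=1}^n A_{n,k}^{\,\infty}$, which is exactly the formula delivered by Proposition~\ref{propdimensionpqsymr} specialized to $r=\infty$.

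Second, I would feed this into the bijection of Proposition~\ref{equalityAandT}. For each fixed $k$ with $1\leq k\leq n$, that proposition provides a bijection between the basis vectors $\G_u^{(\infty)}$ whose index $u$ is an $\infty$-bi-word of length $n$ with $k$ non-empty columns and the set $T'_{n,k}$ of rooted labeled non-planar trees on $[n]$ whose maximal decreasing subtree is a chain with $k$ vertices. Hence $A_{n,k}^{\,\infty}=|T'_{n,k}|$ for every such $k$.

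Finally I would sum over $k$. The one point worth spelling out is that the sets $T'_{n,k}$, for $k$ ranging from $1$ to $n$, partition the family of rooted labeled non-planar trees on $[n]$ whose maximal decreasing subtree is a chain: every such tree has a well-defined maximal decreasing subtree, and when that subtree is a chain its number of vertices is a unique integer $k\in\{1,\dots,n\}$, which sorts the tree into exactly one $T'_{n,k}$. Therefore
\[
\dim({\PQSymstar_n}^{\infty})=\sum_{k=1}^n A_{n,k}^{\,\infty}=\sum_{k=1}^n |T'_{n,k}|
\]
counts precisely these trees, as claimed.

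I do not expect a genuine obstacle, since all the weight is carried by Propositions~\ref{propdimensionpqsymr} and~\ref{equalityAandT}, of which the theorem is essentially a corollary. If anything, the only place demanding care is keeping the indexing transparent, namely confirming that the number $k$ of non-empty columns of an $\infty$-bi-word matches the number of vertices of the associated chain, so that the per-$k$ bijections glue into a single size-respecting correspondence when summed. This is exactly the content recorded by Proposition~\ref{equalityAandT}, so the argument reduces to a clean bookkeeping step.
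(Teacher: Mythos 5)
Your proposal is correct and follows exactly the paper's route: the paper treats Theorem~\ref{thdimtrees} as an immediate corollary of Propositions~\ref{propdimensionpqsymr} and~\ref{equalityAandT} (introducing it with ``Thus, we have obtained a bijective proof of the following''), just as you do by combining $\dim({\PQSymstar_n}^{\infty})=\sum_{k=1}^n A_{n,k}^{\,\infty}$ with the per-$k$ bijections $A_{n,k}^{\,\infty}=|T'_{n,k}|$ and summing over $k$. Your explicit remark that the sets $T'_{n,k}$ partition the trees in question is a small bookkeeping point the paper leaves implicit, but it is the same argument.
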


\begin{remark}
For $r=1$, Propositions \ref{propdimensionpqsymr} and \ref{equalityAandT} give us back Equation $(12)$ of~\cite{seo2012enumeration}:
$$(n+2)^n = dim(\PQSymstar_{n+1}) = \sum_{k=0}^{n} (k+1)! \,A_{n+1,k+1}^{\infty} = \sum_{k=0}^{n} (k+1) |T_{n,k}| .$$
\end{remark}

\begin{remark}
There is a bijection between the set $\hat{\mathcal{T}}_n$ of minimal rooted trees on $[n]$ whose root $x_1$ is smaller than all the children of its smallest child $v_1$ and the set $M_{n,2}$.

\begin{tikzpicture}
\begin{scope}[scale= 0.7]
\draw (2,0) node{$\hat{\mathcal{T}}_n$};
\draw[->] (5,0) -- (8,0);
\draw (11,0) node{$M_{n,2}$};

\begin{scope}[yshift= -5cm, scale= 0.3]
\draw (0,0) ellipse (1cm and 2cm);
\draw (3,0) ellipse (1cm and 2cm);
\draw (7,0) ellipse (1cm and 2cm);
\draw (9,4) ellipse (1cm and 2cm);
\draw (12,4) ellipse (1cm and 2cm);
\draw (16,4) ellipse (1cm and 2cm);

\draw (4,6) -- (10,12);
\draw (9,6) -- (10,12);
\draw (12,6) -- (10,12);
\draw (16,6) -- (10,12);

\draw (4,6) -- (0,2);
\draw (4,6) -- (3,2);
\draw (4,6) -- (7,2);

\draw (10,12) node[above]{\small{$x_1$}};
\draw (4,6) node[above left]{\small{$v_1$}};
\draw (9,6) node[above left]{\small{$v_2$}};
\draw (16,6) node[above right]{\small{$v_i$}};

\draw (0,2.2) node[left]{\small{$w_1$}};
\draw (7,2.2) node[left]{\small{$w_j$}};

\draw (5,1.4) node{\small{\dots}};
\draw (14,6) node{\small{\dots}};
\end{scope}

\draw (6.4,-3.3) node[scale= 2]{$\mapsto$};

\begin{scope}[xshift= 7.8cm, yshift= -3.3cm]
\node at (-0.4,0) [xscale= 1.7, yscale= 8]{$\{$};
\draw (0.5,0) -- (2,1.5);
\draw (1.5,0) -- (2,1.5);
\draw (2.6,0) -- (2,1.5);

\draw (2,1.5) node[above]{\small{$x_1$}};
\draw (0.5,0.2) node[left]{\small{$w_1$}};
\draw (2.6,0.2) node[right]{\small{$w_j$}};

\draw (0.5,-0.5) ellipse (0.25cm and 0.5cm);
\draw (1.5,-0.5) ellipse (0.25cm and 0.5cm);
\draw (2.6,-0.5) ellipse (0.25cm and 0.5cm);
\draw (2,0) node{\small{\dots}};

\begin{scope}[xshift= 4cm]
\node at (3.3,0) [xscale= 1.7, yscale= 8]{$\}$};
\draw (-0.4,0) node[scale= 1.5]{$,$};
\draw (0.5,0) -- (2,1.5);
\draw (1.5,0) -- (2,1.5);
\draw (2.6,0) -- (2,1.5);

\draw (2,1.5) node[above]{\small{$v_1$}};
\draw (0.5,0.2) node[left]{\small{$v_2$}};
\draw (2.6,0.2) node[right]{\small{$v_i$}};

\draw (0.5,-0.5) ellipse (0.25cm and 0.5cm);
\draw (1.5,-0.5) ellipse (0.25cm and 0.5cm);
\draw (2.6,-0.5) ellipse (0.25cm and 0.5cm);
\draw (2,0) node{\small{\dots}};
\end{scope}
\end{scope}
\end{scope}
\end{tikzpicture}

\vspace{-10mm}
Using Proposition~\ref{propbijectionarbreschaine}, it gives a bijective proof of Corollary $4.4$ of~\cite{Rattan2006PermutationFA} that says that the number of prime parking functions of length $n$ such that no letters $2$ appear to the left of the left most $1$, which is in bijection with $\hat{\mathcal{T}}_n$ by looking at the proof of Proposition~\ref{corRattan}, is equal to $A_{n,2}^{\infty}$ and thus  $(n-1)^{n-1} - (n-2)^{n-1}$ by Proposition~\ref{equalityAandT}.
\end{remark}

\section*{Acknowledgements}

I am very thankful to Samuele Giraudo and Jean-Christophe Novelli for helping me throughout this project. I also want to thank Hugh Thomas.

\printbibliography
\end{document}